\DeclareMathAlphabet{\mathpzc}{OT1}{pzc}{m}{it}
\newtheorem{theorem}{Theorem}[section]
\newtheorem{maintheorem}{Main Theorem}[section]
\newtheorem{conjecture}{Conjecture}[section]
\newtheorem{lemma}[theorem]{Lemma}
\newtheorem{proposition}[theorem]{Proposition}
\newtheorem{corollary}[theorem]{Corollary}
\newtheorem{fact}[theorem]{Fact}
\newtheorem{claim}[theorem]{Claim}
\theoremstyle{definition}
\newtheorem{definition}[theorem]{Definition}
\newtheorem{example}[theorem]{Example}
\theoremstyle{remark}
\newtheorem{question}{Question}
\def\hook{\upharpoonright}
\def\forces{\Vdash}
\def\Me{\mathcal M}
\def\Null{\mathcal N}
\def\ZFC{\mathsf{ZFC}}
\def\Kb{\mathcal K}
\def\baire{\omega^\omega}
\def\bbb{(\omega^\omega)^{\omega^\omega}}
\def\mfc{\mathfrak{c}}
\def\mfb{\mathfrak b}
\def \mfd{\mathfrak{d}}
\def\GCH {\mathsf{GCH}}
\def\CH {\mathsf{CH}}
\def\Q{\mathbb Q}
\def\P{\mathbb P}
\def\D{\mathbb D}
\def\cc{\mathfrak{c}}
\def\bb{\mathfrak{b}}
\def\omom{\omega^\omega}
\def\M{\mathcal M}
\def\cof{{\rm cof}}
\begin{document}

\title{Higher Dimensional Cardinal Characteristics for Sets of Functions II}

\author[Brendle]{J\"org Brendle}
\address[J. ~Brendle]{Graduate School of System Informatics, Kobe University, Rokko-Dai 1-1, Nada-Ku, Kobe 657-8501, JAPAN}
\email{brendle@kobe-u.ac.jp}

\author[Switzer]{Corey Bacal Switzer}
\address[C.~B.~Switzer]{Institut f\"{u}r Mathematik, Kurt G\"odel Research Center, Universit\"{a}t Wien, Kolingasse 14-16, 1090 Wien, AUSTRIA}
\email{corey.bacal.switzer@univie.ac.at}

\thanks{\emph{Acknowledgements:} The first author is partially supported by Grant-in-Aid for Scientific Research (C) 18K03398, Japan
Society for the Promotion of Science. The second author would like to thank the
Austrian Science Fund (FWF) for the generous support through grant number Y1012-N35.}
\subjclass[2000]{03E17, 03E35, 03E50} 

\date{}

\maketitle

\begin{abstract}
We study the values of the higher dimensional cardinal characteristics for sets of functions $f:\baire \to \baire$ introduced by the second author in \cite{Switz19}. We prove that while the bounding numbers for these cardinals can be strictly less than the continuum, the dominating numbers cannot. We compute the bounding numbers for the higher dimensional relations in many well known models of $\neg\CH$ such as the Cohen, random and Sacks models and, as a byproduct show that, with possibly one exception, for the bounding numbers there are no $\ZFC$ relations between them beyond those in the higher dimensional Cicho\'{n} diagram. In the case of the dominating numbers we show that in fact they collapse in the sense that modding out by the ideal does not change their values. Moreover, they are closely related to the dominating numbers $\mfd^\lambda_\kappa$.
\end{abstract}

\section{Introduction}
In \cite{Switz19} the second author introduced 18 cardinal characteristics on the set of functions $f:\baire \to \baire$ generalizing standard cardinal characteristics on $\omega$ by replacing relations on $\omega$ such as $\leq$ by their ``mod finite" counterparts e.g. $\leq^*$ and the ``mod finite" quotienting by ``mod $\mathcal I$" where $\mathcal I$ is some well studied $\sigma$-ideal on the reals such as the ideal of meager or measure zero sets. In that article it was shown that these cardinals can be organized into two diagrams of $\ZFC$-provable relations similar to the Cicho\'n diagram (see below Figures \ref{figurenull},\ref{figuremeager}). Several relations between these ``higher dimensional" cardinals and their brethren on $\omega$ were established as well as a number of consistency results, albeit all in the context where $\CH$ holds. 

In this paper we look at when $\CH$ fails, a situation that turns out to be infinitely more flexible and interesting. Working in this context we prove, for the bounding numbers of the higher dimensional relations, that the provable inequalities from the higher Cicho\'{n} diagrams are in fact the only ones, with possibly one exception which we discuss. The proof consists of considering how standard iterated forcing models of the reals (Cohen, Random, Sacks, etc) change the higher dimensional cardinals. We also present a result, due to the first author, that shows that, in contrast to what appeared in \cite{Switz19}, the $\CH$ context was not so interesting after all. We then study the dominating numbers when $\CH$ fails and establish many consistent inequalities while also exhibiting that several surprising $\ZFC$ relations hold between them that fail in the dual case. The result shows that these higher dimensional cardinals fail in a strong sense to satisfy the type of duality results that often characterize the classical cardinal characteristics on $\omega$ and even their higher analogues on arbitrary, regular $\kappa$. To state these results more precisely we recall some definitions.

Given a set $X$ and a binary relation $R$ on $X$ we say that a set $A \subseteq X$ is $R$-{\em bounded} if there is a single $x \in X$ so that $yRx$ for every $y \in A$. We say that $A$ is $R$-{\em unbounded} if it is not $R$-bounded. We say that $A \subseteq X$ is $R$-{\em dominating} if for every $x \in X$ there is a $y \in A$ so that $xRy$. For any such $R$ and $X$ we denote by $\mfb(R)$ the {\em bounding} number of $R$, i.e. the least size of an $R$-unbounded set and by $\mfd(R)$ the {\em dominating} number of $R$, i.e. the least size of an $R$-dominating set.

We work primarily in Baire space. Recall that a {\em slalom} is a function $s:\omega \to [\omega]^{<\omega}$ so that for all $n$ we have $|s(n)| \leq n$. Let $\mathcal S$ be the space of slaloms, which we treat as homeomorphic to $\baire$ by any reasonable homeomorphism. We recall the following three relations.

\begin{definition}
Let $x, y \in \baire$ and $s$ be a slalom.
\begin{enumerate}
\item
We say that $g$ {\em eventually dominates} $f$, in symbols $f \leq^* g$ if for all but finitely many $k < \omega$ we have $f(k) \leq g(k)$. The bounding and dominating numbers for this relation are the classical cardinals $\mfb$ and $\mfd$, see \cite[Section 2]{BlassHB}.
\item
We say that $g$ is {\em eventually different from} $f$, in symbols $f \neq^* g$ if for all but finitely many $k < \omega$ we have $f(k) \neq g(k)$. A theorem of Miller states that $\mfb(\neq^*) = {\rm non}(\Me)$ and $\mfd(\neq^*) = {\rm cov}(\Me)$ where $\Me$ is the ideal of meager sets, see \cite[Theorem 5.9]{BlassHB}.
\item
We say that $f$ is {\em eventually captured by} $s$, in symbols $f \in^* s$ if for all but finitely many $k < \omega$ we have $f(k) \in s(k)$. By a theorem of Bartoszy\'{n}ski we have $\mfb(\in^*) = {\rm add}(\Null)$ and $\mfd(\in^*) = {\rm cov}(\Null)$ where $\Null$ is the ideal of Lebesgue measure zero sets, see \cite[Theorem 5.14]{BlassHB}.
\end{enumerate}
\end{definition}

Throughout let $\mathcal I$ range over the ideal of Lebesgue measure zero sets, denoted $\Null$, the ideal of meager sets, denoted $\Me$, and the ideal of $\sigma$-compact subsets of $\baire$, denoted $\Kb$. Note that by a well known result of Rothberger, the latter ideal can also be characterized as the ideal generated by $\leq^*$-bounded sets, see \cite[Theorem 2.8]{BlassHB}. Let $R$ range over $\leq^*$, $\neq^*$ and $\in^*$. Denote by $\bbb$ the space of functions $f:\baire \to \baire$. The following definition is the main object of study in the paper. 

\begin{definition}
Let $\mathcal I \in \{\mathcal N, \mathcal M, \mathcal K\}$ and $R \in \{\leq^*, \neq^*, \in^*\}$. Let $f, g \in \bbb$, or in the case of $R = \in^*$, let $g: \baire \to \mathcal S$. We define the relation $R_\mathcal I$ by $f R_\mathcal I g$ if and only if $\{x \in \baire \; | \; \neg (f(x) R g(x))\} \in \mathcal I$. In other words, $g$ is an $R$-bound for $f$ on an $\mathcal I$-measure one set.
\end{definition}

By varying $\mathcal I$ and $R$ this definition gives nine new relations and 18 new cardinal characteristics, a bounding and dominating number for each. For readability, let us give the details below for the case of the null ideal. Similar statements hold for $\Me$ and $\Kb$. First let's see explicitly what each relation $R_\mathcal I$ is. On the two lists below let $f, g:\baire \to \baire$ and $h:\baire \to \mathcal S$.

\begin{enumerate}
\item
$f \neq^*_\Null g$ if and only if for all but a measure zero set of $x \in \baire$ we have that $f (x) \neq^* g(x)$.

\item
$f \leq^*_\Null g$ if and only if for all but a measure zero set of $x \in \baire$ we have that $f(x) \leq^* g(x)$.

\item
$f \in^*_\Null h$ if and only if for all but a measure zero set of $x \in \baire$ we have that $f(x) \in^* h(x)$.
\end{enumerate}

For the cardinals now we get the following. Note that $\neg x \neq^* y$ means $\exists^\infty n \, x(n) = y(n)$ and the same for the other relations.

\begin{enumerate}

\item
$\mfb(\neq^*_\Null)$ is the least size of a $\neq^*_\Null$-unbounded set $A \subseteq \bbb$ i.e. $A$ is such that for each $f:\baire \to \baire$ there is a $g \in A$ so that the set of $\{x \; | \; \exists^\infty n \, g(x)(n) = f(x)(n)\}$ is not measure zero.

\item
$\mfd(\neq^*_\Null)$ is the least size of a $\neq^*_\Null$-dominating set $A \subseteq \bbb$ i.e. $A$ is such that for every $f:\baire \to \baire$ there is a $g \in A$ so that $\mu (\{x \; | \; f(x)\neq^* g(x)\} )= 1$.

\item
$\mfb(\leq^*_\Null)$ is the least size of a $\leq^*_\Null$-unbounded set $A \subseteq \bbb$ i.e. $A$ is such that for each $f:\baire \to \baire$ there is a $g \in A$ so that the set of $\{x \; | \; \exists^\infty n \, f(x)(n) < g(x)(n)\}$ is not measure zero.

\item
$\mfd(\leq^*_\Null)$ is the least size of a $\leq^*_\Null$-dominating set $A \subseteq \bbb$ i.e. $A$ is such that for every $f:\baire \to \baire$ there is a $g \in A$ so that $\mu (\{x \; | \; f(x)\leq^* g(x)\}) = 1$.

\item
$\mfb(\in^*_\Null)$ is the least size of a $\in^*_\Null$-unbounded set $A \subseteq \bbb$ i.e. $A$ is such that for each $f:\baire \to \mathcal S$ there is a $g \in A$ so that the set of $\{x \; | \; \exists^\infty n \, g(x)(n) \notin f(x)(n)\}$.

\item
$\mfd(\in^*_\Null)$ is the least size of a $\leq^*_\Null$-dominating set $A \subseteq \bbb$ i.e. $A$ is such that for every $f:\baire \to \baire$ there is a $g \in A$ so that $\mu (\{x \; | \; f(x)\in^* g(x)\}) = 1$.

\end{enumerate}

Let us also recall provable relations for these cardinals. The following diagrams are \say{higher dimensional} analogues of Cicho\'n's diagram.
\begin{theorem}[Theorem 1.1, \cite{Switz19}]
Interpreting $\to$ as $\leq$ the inequalities shown in Figures 1 and 2 are all provable in $\ZFC$.

\begin{figure}[h]
\centering
  \begin{tikzpicture}[scale=1.5,xscale=2]
     % place and draw the nodes
     \draw %(0,0) node (empty) {$\emptyset$}
           (1,0) node (Bin*) {$\mfb(\in_\Null^*)$}
           (1,1) node (Bleq*) {$\mfb(\leq_\Null^*)$}
           (1,2) node (Bneq*) {$\mfb(\neq_\Null^*)$}
           (2,0) node (Dneq*) {$\mfd(\neq_\Null^*)$}
           (2,1) node (Dleq*) {$\mfd(\leq_\Null^*)$}
           (2,2) node (Din*) {$\mfd(\in_\Null^*)$}
           %(3,2) node (all) {$\omega^\omega\setminus(\omega^\omega)^W$}
           ;
     % draw the arrows
     \draw[->,>=stealth]
            %(empty) edge (Bin*)
            (Bin*) edge (Bleq*)
            (Bleq*) edge (Bneq*)
            (Bin*) edge (Dneq*)
            (Bleq*) edge (Dleq*)
            (Bneq*) edge (Din*)
            (Dneq*) edge (Dleq*)
            (Dleq*) edge (Din*)
            %(Din*) edge (all)
;      
  \end{tikzpicture}
\caption{Higher Dimensional Cardinal Characteristics Mod the Null Ideal}
\label{figurenull}
\end{figure}

\begin{figure}[h]
\centering
  \begin{tikzpicture}[scale=1.5,xscale=2]
     % place and draw the nodes
     \draw %(0,0) node (empty) {$\emptyset$}
           (1,0) node (Bin*) {$\mfb(\in_\Me^*)$}
           (1,1) node (Bleq*) {$\mfb(\leq_\Me^*)$}
           (1,2) node (Bneq*) {$\mfb(\neq_\Me^*)$}
           (2,0) node (Dneq*) {$\mfd(\neq_\Me^*)$}
           (2,1) node (Dleq*) {$\mfd(\leq_\Me^*)$}
           (2,2) node (Din*) {$\mfd(\in_\Me^*)$}
	     (0,0) node (BinK*) {$\mfb(\in_\Kb^*)$}
           (0,1) node (BleqK*) {$\mfb(\leq_\Kb^*)$}
           (0,2) node (BneqK*) {$\mfb(\neq_\Kb^*)$}
           (3,0) node (DneqK*) {$\mfd(\neq_\Kb^*)$}
           (3,1) node (DleqK*) {$\mfd(\leq_\Kb^*)$}
           (3,2) node (DinK*) {$\mfd(\in_\Kb^*)$}
           %(3,2) node (all) {$\omega^\omega\setminus(\omega^\omega)^W$}
           ;
     % draw the arrows
     \draw[->,>=stealth]
            %(empty) edge (Bin*)
            (Bin*) edge (Bleq*)
            (Bleq*) edge (Bneq*)
            (Bin*) edge (Dneq*)
            (Bleq*) edge (Dleq*)
            (Bneq*) edge (Din*)
            (Dneq*) edge (Dleq*)
            (Dleq*) edge (Din*)
            (Dleq*) edge (DleqK*)
            (Dneq*) edge (DneqK*)
            (Din*) edge (DinK*)
            (BleqK*) edge (Bleq*)
            (BneqK*) edge (Bneq*)
            (BinK*) edge (Bin*)
            (BinK*) edge (BleqK*)
            (BleqK*) edge (BneqK*)
            (DneqK*) edge (DleqK*)
	      (DleqK*) edge (DinK*)

            %(Din*) edge (all)
;
 \end{tikzpicture}
\caption{Higher Dimensional Cardinal Characteristics Mod the Meager and $\sigma$-Compact Ideals}
\label{figuremeager}
\end{figure}

\label{cichondiagramtheorem}

\end{theorem}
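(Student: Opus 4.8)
The plan is to read every arrow as a Galois--Tukey connection between relational systems and to check that the assignment $R\mapsto R_{\mathcal I}$ lifts such connections coordinatewise, so that the entire diagram reduces to the classical connections among the three generating relations $\leq^*$, $\neq^*$, $\in^*$. Recall that for a relational system $(A,B,R)$ the number $\mfd(A,B,R)$ is the least size of a $D\subseteq B$ with $\forall a\,\exists d\in D\,(a\,R\,d)$, and $\mfb(A,B,R)$ the least size of a $U\subseteq A$ with $\forall b\,\exists u\in U\,\neg(u\,R\,b)$. I would use two kinds of witnesses. A \emph{morphism} $(A,B,R)\to(A',B',S)$ is a pair of maps $\phi_-\colon A\to A'$, $\phi_+\colon B'\to B$ with $\phi_-(a)\,S\,b'\Rightarrow a\,R\,\phi_+(b')$; it delivers $\mfd(A,B,R)\le\mfd(A',B',S)$ and $\mfb(A',B',S)\le\mfb(A,B,R)$. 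A \emph{diagonal witness} from $(A,B,R)$ to $(A',B',S)$ is a pair $\psi\colon B'\to A$, $\theta\colon B\to A'$ such that $\theta(b)\,S\,d\Rightarrow\neg(\psi(d)\,R\,b)$ for all $b\in B$, $d\in B'$; given an $S$-dominating $D\subseteq B'$, domination at $\theta(b)$ produces $d\in D$ with $\theta(b)\,S\,d$, whence $\neg(\psi(d)\,R\,b)$, so $\psi[D]$ is $R$-unbounded and $\mfb(A,B,R)\le\mfd(A',B',S)$.

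Next I would prove the single lemma on which everything rests: a morphism (resp.\ a diagonal witness) given by honest functions lifts, by applying those functions coordinatewise, to a morphism (resp.\ diagonal witness) between the corresponding relations $R_{\mathcal I}$, $S_{\mathcal I}$ on $\bbb$. For a morphism this uses only downward closure of $\mathcal I$: with $f\mapsto\phi_-\circ f$ and $g\mapsto\phi_+\circ g$ one has $\{x:\neg(f(x)\,R\,(\phi_+\circ g)(x))\}\subseteq\{x:\neg((\phi_-\circ f)(x)\,S\,g(x))\}$, so membership of the right set in $\mathcal I$ forces it for the left. For a diagonal witness one instead gets $\{x:\neg(\psi(G(x))\,R\,H(x))\}\supseteq\{x:\theta(H(x))\,S\,G(x)\}$, and here I use that $\mathcal I$ is \emph{proper} ($\baire\notin\mathcal I$): if the complement of the right set lies in $\mathcal I$, then the left set contains a co-$\mathcal I$ set and hence is not in $\mathcal I$. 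In either case the lifted connection reproduces the same inequality one level up, now between the $R_{\mathcal I}$-cardinals.

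With the lemma in hand the diagram is assembled by naming the generating connections. The two vertical chains in each figure are liftings of the classical morphisms $\leq^*\to\in^*$, via $\phi_-=\mathrm{id}$ and $\phi_+(s)=(n\mapsto\max s(n))$ (so $f\in^* s\Rightarrow f\leq^*\phi_+(s)$), and $\neq^*\to\leq^*$, via coding $f$ through its initial segments and, from a dominating $g$, choosing at each coordinate a value outside the finitely many candidates $g$ still permits; these give $\mfb(\in^*_{\mathcal I})\le\mfb(\leq^*_{\mathcal I})\le\mfb(\neq^*_{\mathcal I})$ and dually $\mfd(\neq^*_{\mathcal I})\le\mfd(\leq^*_{\mathcal I})\le\mfd(\in^*_{\mathcal I})$. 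The three bounding-to-dominating arrows are liftings of diagonal witnesses: for $\leq^*$ against $\leq^*$ take $\theta=\mathrm{id}$, $\psi(h)=h+1$, yielding $\mfb(\leq^*_{\mathcal I})\le\mfd(\leq^*_{\mathcal I})$; the remaining two, $\mfb(\in^*_{\mathcal I})\le\mfd(\neq^*_{\mathcal I})$ and $\mfb(\neq^*_{\mathcal I})\le\mfd(\in^*_{\mathcal I})$, are liftings of the classical slalom diagonalizations behind $\mfb(\in^*)\le\mfd(\neq^*)$ and $\mfb(\neq^*)\le\mfd(\in^*)$. Finally, the arrows of Figure \ref{figuremeager} joining the $\Kb$-systems to the $\Me$-systems need no lifting: since every member of $\Kb$ is contained in a $\sigma$-compact, hence meager, set, $\Kb\subseteq\Me$, so $R_{\Kb}\subseteq R_{\Me}$ as relations on $\bbb$, and monotonicity in the relation gives $\mfb(R_{\Kb})\le\mfb(R_{\Me})$ and $\mfd(R_{\Me})\le\mfd(R_{\Kb})$ for each $R$.

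The main obstacle is not the lifting lemma, which is soft, but verifying that the classical connections among $\leq^*$, $\neq^*$, $\in^*$ are realised by genuine functions so that the lemma applies; the delicate cases are the two slalom diagonal witnesses, whose construction is the usual interval-and-counting combinatorics (one spreads each value of $g$ redundantly over a block of coordinates so that the width bound $|s(n)|\le n$ leaves a correct guess infinitely often, respectively forces an escape infinitely often). A secondary point is that some classical witnesses naturally produce, for each member of the dominating family, a countable set of functions rather than one; this is harmless, as one lifts each of the countably many maps coordinatewise and takes the union, the total cardinality being unchanged since all cardinals involved are uncountable. Once these finitely many generating connections are checked, both figures follow uniformly.
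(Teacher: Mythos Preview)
The paper does not prove this theorem: it is quoted as Theorem~1.1 of \cite{Switz19} and no argument is given here, so there is nothing in the present paper to compare your proposal against. That said, your Galois--Tukey lifting approach is exactly the standard route for results of this shape and is almost certainly what is done in \cite{Switz19}; the coordinatewise lifting lemma is correct as you state it (downward closure of $\mathcal I$ for morphisms, properness of $\mathcal I$ for the diagonal witnesses), and the inclusion $\Kb\subseteq\Me$ handles the outer arrows of Figure~\ref{figuremeager} directly.

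Two remarks on the details. First, your description of the morphism $\neq^*\to\leq^*$ is more elaborate than necessary: taking $\phi_-=\mathrm{id}$ and $\phi_+(g)=g+1$ already gives $\phi_-(f)\leq^* g\Rightarrow f\neq^*\phi_+(g)$, so the initial-segment coding is not needed there. Second, and more importantly, you correctly flag the two slalom diagonal witnesses as the only nontrivial step. For $\mfb(\neq^*_{\mathcal I})\leq\mfd(\in^*_{\mathcal I})$ one can take intervals $I_n$ with $|I_n|=n$, set $\theta(g)(n)$ to code $g\upharpoonright I_n$, and let $\psi(s)\upharpoonright I_n$ agree with the $j$-th decoded sequence from $s(n)$ at the $j$-th coordinate of $I_n$; since $|s(n)|\leq n=|I_n|$ this is well-defined, and $\theta(g)\in^* s$ forces $\psi(s)=^\infty g$. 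The dual $\mfb(\in^*_{\mathcal I})\leq\mfd(\neq^*_{\mathcal I})$ is handled by the analogous interval trick. These are honest functions, so your lifting lemma applies without needing the ``countably many witnesses'' escape hatch you mention at the end.
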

\newpage
We also get the following bounds on these cardinals in terms of $\omega$.
\begin{lemma}[Proposition 3.1 of \cite{Switz19}]
Let $R \in \{\in^*, \leq^*, \neq^*\}$ and $\mathcal I \in \{\Kb, \Null, \Me\}$. In $\ZFC$ it's provable that $\mfb(R) \leq \mfb(R_\mathcal I) \leq \mfb(R)^{{\rm non}(\mathcal I)}$.
\label{omegabounds}
\end{lemma}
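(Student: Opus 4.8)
The plan is to prove the two inequalities separately; both proceed by unwinding the definitions of $\mfb(R)$, $\mfb(R_\mathcal I)$ and ${\rm non}(\mathcal I)$. The lower bound $\mfb(R)\leq \mfb(R_\mathcal I)$ is a pointwise bounding argument, while the upper bound $\mfb(R_\mathcal I)\leq \mfb(R)^{{\rm non}(\mathcal I)}$ is an explicit construction of an unbounded family concentrated on a small $\mathcal I$-positive set. Throughout I use only that $\mathcal I$ is a proper, downward-closed ideal containing $\emptyset$, so the argument is uniform over $\mathcal I\in\{\Null,\Me,\Kb\}$ and $R\in\{\leq^*,\neq^*,\in^*\}$.

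For $\mfb(R)\leq \mfb(R_\mathcal I)$, I would show that every family $A\subseteq\bbb$ with $|A|<\mfb(R)$ is $R_\mathcal I$-bounded. Fix such an $A$. For each $x\in\baire$ consider the section $A_x=\{f(x):f\in A\}$, a subset of $\baire$ (or of $\mathcal S$ when $R=\in^*$) of size at most $|A|<\mfb(R)$. By definition of $\mfb(R)$ each $A_x$ is $R$-bounded, so using choice I pick for every $x$ a single $R$-bound $g(x)$ for $A_x$. The resulting $g\in\bbb$ (respectively $g:\baire\to\mathcal S$) satisfies $f(x)\mathbin R g(x)$ for every $x$ and every $f\in A$, so the exceptional set $\{x:\neg(f(x)\mathbin R g(x))\}$ is empty, a fortiori in $\mathcal I$, whence $f\mathbin{R_\mathcal I}g$. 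Thus $A$ is $R_\mathcal I$-bounded, and the inequality follows.

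For $\mfb(R_\mathcal I)\leq \mfb(R)^{{\rm non}(\mathcal I)}$, write $\kappa=\mfb(R)$ and $\nu={\rm non}(\mathcal I)$ and build an $R_\mathcal I$-unbounded family of size at most $\kappa^\nu$. Fix a set $X\subseteq\baire$ with $X\notin\mathcal I$ and $|X|=\nu$, which exists by definition of ${\rm non}(\mathcal I)$, together with an $R$-unbounded family $B$ in the relevant space ($\baire$, or $\mathcal S$ for $\in^*$) of size $\kappa$. For each $\phi:X\to B$ let $f_\phi\in\bbb$ agree with $\phi$ on $X$ and be identically $0$ off $X$, and set $A=\{f_\phi:\phi\in B^X\}$, so $|A|\leq\kappa^\nu$. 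To see $A$ is $R_\mathcal I$-unbounded, take any candidate bound $g$ (a map $\baire\to\baire$, or $\baire\to\mathcal S$ when $R=\in^*$). The crucial reformulation is that $B$ being $R$-unbounded means exactly that for every target $y$ there is some $b\in B$ with $\neg(b\mathbin R y)$; applying this with $y=g(x)$ for each $x\in X$ and collecting the witnesses yields a map $\phi:X\to B$ with $\neg(f_\phi(x)\mathbin R g(x))$ for all $x\in X$. Then $\{x:\neg(f_\phi(x)\mathbin R g(x))\}\supseteq X\notin\mathcal I$, so $g$ fails to $R_\mathcal I$-bound $f_\phi\in A$. As $g$ was arbitrary, $A$ is $R_\mathcal I$-unbounded and $\mfb(R_\mathcal I)\leq\kappa^\nu$.

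Neither direction presents a serious obstacle; the content is essentially bookkeeping. The one point requiring care is the reformulation invoked in the upper bound, namely that $R$-unboundedness of $B$ is equivalent to: for every single $y$ in the target space, some member of $B$ is not $R$-below $y$. The other thing worth flagging is that the witness $\phi$ in the upper bound depends on the candidate $g$, which is precisely why the family $A$ must range over all of $B^X$ and why the exponent ${\rm non}(\mathcal I)$ enters.
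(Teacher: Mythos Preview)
Your argument is correct and is the natural one; the paper itself does not supply a proof but merely cites this as Proposition~3.1 of \cite{Switz19}, and your two-step approach (pointwise bounding for the lower inequality, concentrating an unbounded family on a non-$\mathcal I$ set for the upper inequality) is exactly the standard proof. One small slip: in the upper bound you write that the $R$-unbounded family $B$ lives in $\mathcal S$ when $R=\in^*$, but in fact $B\subseteq\baire$ for all three relations---it is the \emph{bounds} $g(x)$ that lie in $\mathcal S$ in the $\in^*$ case. This does not affect the argument, which goes through verbatim once the roles are straightened out.
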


It was previously left open whether these cardinals could be less than the continuum. We answer this in the affirmative for the $\mfb(R_\mathcal I)$ cardinals and in the negative for the $\mfd (R_\mathcal I)$ cardinals. In the case of the $\mfb(R_\mathcal I)$ cardinals this is a consequence of the following theorem.

\begin{maintheorem}
Suppose $R, S  \in \{\in^*, \leq^*, \neq^*\}$ and $\mathcal I, \mathcal J \in \{\Kb, \Null, \Me\}$. With possibly one exception, if $\mathfrak{b}(R_\mathcal I) \leq \mathfrak{b}(S_\mathcal J)$ is not an inequality in either of Figures \ref{figurenull}, \ref{figuremeager} then it is consistently false.  Moreover, for every $R$ and $\mathcal I$, $\mfb(R_\mathcal I)$ can be strictly less than the continuum. More succinctly, with one possible exception, there are no $\ZFC$ provable inequalities between the cardinals $\mfb(R_\mathcal I)$ beyond what is stated in Theorem \ref{cichondiagramtheorem} and Lemma \ref{omegabounds}.
\label{mainthm1}
\end{maintheorem}

The one possible exception is that it remains open whether $\mfb(\neq^*_\Null) < \mfb(\in^*_\Kb)$ is consistent. This is discussed as Question \ref{conineq} below.

Main Theorem \ref{mainthm1} follows cumulatively from the analysis of the various models in Section 4 below. The second main theorem is the following, due to the first author.

\begin{maintheorem}
Assume $\CH$. For all $R \in \{\in^*, \leq^*, \neq^*\}$ and all $\mathcal I \in \{\Kb, \Me, \Null\}$ we have $\mfb(R_\mathcal I) = \mfb_{\aleph_1}$ and $\mfd(R_\mathcal I) = \mfd_{\aleph_1}$.
\label{CHtheorem}
\end{maintheorem}

Here, $\mfb_\kappa$ and $\mfd_\kappa$ are the generalized bounding and dominating numbers for the eventual domination relation on a given cardinal $\kappa$. Main Theorem \ref{CHtheorem} addresses a number of questions left open about the models constructed in Section 4 of \cite{Switz19}. There, consistent inequalities between the cardinals in Figures \ref{figurenull} and \ref{figuremeager} were constructed under $\CH$. From this theorem we see that, in fact, there was an easier way to obtain those results.

We also study the cardinals $\mfd(R_\mathcal I)$ when $\CH$ fails and show that the above Main Theorem \ref{CHtheorem} can fail in this context. 

\begin{maintheorem}
For all $\mathcal I \in \{\Null, \Me, \Kb\}$ it is consistent that $\mfd(\leq^*_\mathcal I)< \mfd(\in^*_\mathcal I)$ and $\mfd (\neq^*_\mathcal I)< \mfd(\leq^*_\mathcal I)$.
\label{mainthm3}
\end{maintheorem}

However, surprisingly, the ideal does not matter for the $\mfd(R_\mathcal I)$ cardinals (in contrast to the $\mfb(R_\mathcal I)$ cardinals), a result also due to the first author.

\begin{maintheorem}
For all $R \in \{\in^*, \leq^*, \neq^*\}$ $\ZFC$ proves that $\mfd(R_\Null) = \mfd(R_\Me) = \mfd (R_\Kb)$.
\label{mainthm4}
\end{maintheorem}

Main Theorems \ref{mainthm1}, \ref{mainthm3} and \ref{mainthm4} above taken together show that duality fails strongly for these higher cardinal characteristics, making their theory starkly different from that of the classical cardinal characteristics as studied for example in \cite{BarJu95} or \cite{BlassHB} or even their higher dimensional analogues on $\kappa$, see \cite{cichonunctble}. See \cite[Section 4]{BlassHB} for a particular discussion of this point in the context of cardinals on $\omega$.

The rest of this article is organized as follows. In the next section we study the $\CH$ case, culminating in the proof of Main Theorem \ref{CHtheorem}. In Section 3 we prove a number of $\ZFC$ results concerning various implications between the higher dimensional bounding numbers, cardinals on $\omega$ and the continuum. In Section 4 we study the bounding numbers for the relations $R_\mathcal I$ in well known models of $\neg \CH$, culminating in a proof of Main Theorem \ref{mainthm1}. In Section 5 we study the $\mathfrak{d}(R_\mathcal I)$ cardinals, showing Main Theorem \ref{mainthm4}. In Section 6 we study consistent inequalities between the dominating numbers culminating in the proof of Main Theorem \ref{mainthm3}. Section 7 concludes with some open questions and final remarks.

\section{The $\CH$ Case and Eventual Domination in Generalized Baire Space}
In this section we prove Main Theorem \ref{CHtheorem}. In fact we show something slightly stronger, from which Main Theorem \ref{CHtheorem} follows immediately. We show the following.

\begin{theorem}
Assume $\mfc = \mfc^{{<}\mfc}$ is a successor cardinal. Fix $R \in \{\in^*, \leq^*, \neq^*\}$ and $\mathcal I \in \{\Kb, \Me, \Null\}$. If $\mfb(R) = {\rm add}(\mathcal I) = \mfc$ then $\mfb(R_\mathcal I) = \mfb_\mfc$ and $\mfd(R_\mathcal I) = \mfd_\mfc$. In particular these equalities hold under $\CH$.
\label{CHtheorem2}
\end{theorem}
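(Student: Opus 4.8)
The plan is to prove the stronger statement that the relation $R_\mathcal I$ on $\bbb$ is Tukey-equivalent to eventual domination on $\mfc^\mfc$ modulo bounded sets, whose bounding and dominating numbers are by definition $\mfb_\mfc$ and $\mfd_\mfc$. I would factor this through two independent reductions: first \emph{linearise} the coordinate relation $R$, replacing each value $f(x)\in\baire$ by an ordinal $<\mfc$ so that $R_\mathcal I$ becomes ordinal domination modulo $\mathcal I$; then replace modding out by $\mathcal I$ with modding out by bounded (equivalently $<\mfc$-sized) sets. Two consequences of the hypotheses drive this. From $\mfb(R)\le\mfd(R)\le\mfc=\mfb(R)$ I get $\mfd(R)=\mfc$, so (as $\mfc$ is regular, being a successor, and $\mfc^{<\mfc}=\mfc$) there is an $R$-scale $\langle h_\gamma:\gamma<\mfc\rangle$, i.e.\ an $R$-increasing, $R$-cofinal sequence in $\baire$ (respectively in the slalom space when $R={\in^*}$). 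Assigning to $f$ the rank function $x\mapsto\min\{\gamma: f(x)\,R\,h_\gamma\}$ and to $a:\baire\to\mfc$ the function $x\mapsto h_{a(x)}$ yields, by monotonicity and minimality of ranks, the biconditional $f\,R_\mathcal I\,(x\mapsto h_{a(x)})\iff\{x:\mathrm{rank}_f(x)>a(x)\}\in\mathcal I$; hence $(\bbb,R_\mathcal I)\equiv_T(\mfc^\baire,\sqsubseteq_\mathcal I)$, where $a\sqsubseteq_\mathcal I b$ iff $\{x:a(x)>b(x)\}\in\mathcal I$. From ${\rm add}(\mathcal I)=\mfc$ and ${\rm add}(\mathcal I)\le{\rm non}(\mathcal I)\le{\rm cof}(\mathcal I)\le\mfc$ I get ${\rm non}(\mathcal I)={\rm cof}(\mathcal I)=\mfc$, so $[\baire]^{<\mfc}\subseteq\mathcal I$ and $\mathcal I$ has a $\subseteq$-increasing cofinal sequence $\langle I_\beta:\beta<\mfc\rangle$ covering $\baire$.

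The two ``free'' inequalities then come from $\sqsubseteq_\mathcal I$ being \emph{weaker} than $\sqsubseteq_{<\mfc}$ (domination off a set of size $<\mfc$), since $[\baire]^{<\mfc}\subseteq\mathcal I$; reindexing $\baire$ by $\mfc$ gives $\mfb(\sqsubseteq_{<\mfc})=\mfb_\mfc$ and $\mfd(\sqsubseteq_{<\mfc})=\mfd_\mfc$, so $\mfb(R_\mathcal I)\ge\mfb_\mfc$ and $\mfd(R_\mathcal I)\le\mfd_\mfc$. For the reverse inequalities I would build an $\mathcal I$-ladder, recursively choosing $p_\alpha\in\baire\setminus\bigcup_{\beta<\alpha}I_\beta$ (possible since the union lies in $\mathcal I$ by additivity), so that each $I_\beta$ meets $P=\{p_\alpha:\alpha<\mfc\}$ in a bounded initial segment and therefore $\mathcal I\restriction P$ is exactly the bounded ideal on the index $\mfc$. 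Coding a given function $\bar a\in\mfc^\mfc$ as $a\in\mfc^\baire$ supported on $P$ (via $a(p_\alpha)=\bar a(\alpha)$, $a\equiv 0$ off $P$) and reading off ranks on $P$, an $R_\mathcal I$-bound (resp.\ $R_\mathcal I$-dominating family) restricts to a $\sqsubseteq_{<\mfc}$-bound (resp.\ dominating family) on $\mfc^\mfc$, giving $\mfb(R_\mathcal I)\le\mfb_\mfc$ and $\mfd(R_\mathcal I)\ge\mfd_\mfc$ and completing the equalities for $R\in\{\le^*,\in^*\}$.

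The hard part will be the relation $\neq^*$, which is symmetric and non-transitive and so admits no monotone scale; the rank reduction of the first step simply fails here. For this case I expect to argue directly on the ladder $P$, using ${\rm non}(\Me)={\rm cov}(\Me)=\mfc$ in place of a scale: a $<\mfc$-sized family of reals is $\neq^*$-bounded, every real is $\neq^*$-met by a member of a fixed $\mfc$-sized family, and $\mfc^{<\mfc}=\mfc$ lets me enumerate and diagonalise the $<\mfc$-sized partial requirements so that the per-coordinate eventually-different demands aggregate, modulo bounded sets on $P$, into plain domination mod bounded on $\mfc$.

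The delicate point throughout is that $\mathcal I$ contains sets of size $\mfc$, so $\mathcal I$-smallness is a priori far weaker than $<\mfc$-smallness; it is precisely the ladder (turning $\mathcal I$ into the bounded ideal on a cofinal set of coordinates) together with regularity of $\mfc$ and $\mfc^{<\mfc}=\mfc$ (which collapses pointwise/patched domination to domination mod bounded, keeping the cofinality at $\mfd_\mfc$) that reconciles the two and forces the exact values $\mfb_\mfc$ and $\mfd_\mfc$. Finally, under $\CH$ one has $\mfb(R)={\rm add}(\mathcal I)=\mfc=\aleph_1$ and $\mfc^{<\mfc}=\mfc$ with $\mfc$ a successor, so the hypotheses are met and the stated special case follows.
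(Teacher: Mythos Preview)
Your ladder construction and the overall two-stage reduction are on target and closely mirror the paper's argument. The genuine gap is the $\neq^*$ case, which you correctly flag but do not resolve: the sketch ``enumerate and diagonalise the ${<}\mfc$-sized partial requirements'' is not a proof. Moreover, because both of your directions for $\leq^*$ and $\in^*$ route through the first-stage rank reduction, and that reduction needs monotonicity of the scale $\langle h_\gamma\rangle$, you currently have \emph{neither} inequality for $\neq^*$. (A smaller but related point: for $\in^*$ you assert an ``$R$-increasing'' scale of slaloms; with the size constraint $|s(n)|\le n$ there is no obvious transitive ``eventual containment'' order making this automatic, and you have not justified it.)

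The paper handles this by decoupling the two directions and avoiding scales entirely. For $\mfd(R_\mathcal I)\le\mfd_\mfc$ and $\mfb_\mfc\le\mfb(R_\mathcal I)$ (its Lemma~\ref{upperboundD}) it enumerates $\baire=\{x_\alpha:\alpha<\mfc\}$ and, using only $\mfb(R)=\mfc$, picks for each $\alpha$ a single $y_\alpha$ that $R$-bounds the initial segment $\{x_\beta:\beta\le\alpha\}$; the map $f\mapsto(x_\alpha\mapsto y_{f(\alpha)})$ then carries $\le^*$-dominating families in $\mfc^\mfc$ to $R_\mathcal I$-dominating families, with monotonicity built in because $y_\gamma$ bounds \emph{every} real of index $\le\gamma$. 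This works uniformly for all three $R$, including $\neq^*$, and also removes your $\in^*$ worry. For the reverse inequalities (its Lemma~\ref{lowerboundD}) the paper uses exactly your ladder $\langle p_\alpha\rangle$, but observes that restricting an $\neq^*_\mathcal I$-dominating family to the ladder only yields $\neq^*$-domination on $\mfc^\mfc$ (if $f(p_\alpha)\neq^* g(p_\alpha)$ then certainly $f(p_\alpha)\neq g(p_\alpha)$), hence $\mfd_\mfc(\neq^*)\le\mfd(\neq^*_\mathcal I)\le\mfd(R_\mathcal I)$ via the higher Cicho\'n diagram. The final step is to quote the theorem of \cite[Theorem~21]{cichonunctble} that $\mfb_\mfc(\neq^*)=\mfb_\mfc$ and $\mfd_\mfc(\neq^*)=\mfd_\mfc$ when $\mfc$ is a successor with $\mfc^{<\mfc}=\mfc$; this external result is precisely where the successor hypothesis enters, and it is the rigorous substitute for the diagonalisation you gesture at.
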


Again recall that here, for any $\kappa$, that if $f, g \in \kappa^\kappa$ then $f\leq^* g$ if $\{\alpha \; | \; f(\alpha) \nleq g(\alpha)\}$ is bounded. The cardinals $\mfb_\kappa$ and $\mfd_\kappa$ denote the bounding and dominating numbers for this relation respectively.

Of course, in light of Main Theorem \ref{mainthm4} adding in the ideal $\mathcal I$ for the dominating numbers is redundant, however we are interested in a duality result for the bounding numbers as well so we leave it in. Towards proving Theorem \ref{CHtheorem2} we start with the following Lemma. 

\begin{lemma}
Fix $R \in \{\in^*, \leq^*, \neq^*\}$ and $\mathcal I \in \{\Kb, \Me, \Null\}$. If $\mfb(R) = {\rm non}(\mathcal I) = \mfc$ then $\mfd(R_\mathcal I) \leq \mfd_\mfc$ and $\mfb_\mfc \leq \mfb(R_\mathcal I)$.
\label{upperboundD}
\end{lemma}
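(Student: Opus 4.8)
The plan is to fix once and for all an enumeration $\baire = \{x_\alpha \; | \; \alpha < \mfc\}$ of the domain together with an enumeration $\{b_\xi \; | \; \xi < \mfc\}$ of the range $\baire$ of the values $f(x)$, and to transfer everything to the relation $\leq^*$ on $\mfc^\mfc$ via the \emph{index functions} $\hat f \in \mfc^\mfc$, where $\hat f(\alpha)$ is the least $\xi$ with $f(x_\alpha) = b_\xi$. The hypothesis ${\rm non}(\mathcal I) = \mfc$ enters in exactly one uniform way: every subset of $\baire$ of size $< \mfc$ lies in $\mathcal I$. Hence whenever the bad set $\{x_\alpha \; | \; \neg(f(x_\alpha) R g(x_\alpha))\}$ is contained in a bounded initial segment $\{x_\alpha \; | \; \alpha < \gamma\}$ with $\gamma < \mfc$, we automatically get $f R_\mathcal I g$. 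So it suffices to build bounds and dominators whose set of ``bad coordinates'' is bounded in $\mfc$.

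For $\mfb_\mfc \leq \mfb(R_\mathcal I)$ I would show that every family $\{f_i \; | \; i \in I\} \subseteq \bbb$ with $|I| < \mfb_\mfc$ is $R_\mathcal I$-bounded. Passing to the index functions $\{\hat f_i \; | \; i \in I\} \subseteq \mfc^\mfc$, this family is $\leq^*$-bounded because $|I| < \mfb_\mfc$; fix $G \in \mfc^\mfc$ with $\hat f_i \leq^* G$ for all $i$, witnessed by thresholds $\gamma_i < \mfc$. Now at each coordinate $\alpha$ I would let $g(x_\alpha)$ be a single $R$-bound for the entire initial segment $\{b_\xi \; | \; \xi \leq G(\alpha)\}$, which exists because that set has size $< \mfc = \mfb(R)$ and so is $R$-bounded (for $R = \in^*$ it is captured by one slalom). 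For $\alpha \geq \gamma_i$ we have $\hat f_i(\alpha) \leq G(\alpha)$, hence $f_i(x_\alpha) = b_{\hat f_i(\alpha)} R g(x_\alpha)$; thus the bad set for $f_i$ is contained in $\{x_\alpha \; | \; \alpha < \gamma_i\}$ and lies in $\mathcal I$. So $g$ is a common $R_\mathcal I$-bound.

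For $\mfd(R_\mathcal I) \leq \mfd_\mfc$ I would dualize. Fix a $\leq^*$-dominating family $\{D_\nu \; | \; \nu < \mfd_\mfc\} \subseteq \mfc^\mfc$, and for each $\nu$ define $g_\nu$ by letting $g_\nu(x_\alpha)$ be a single $R$-bound for $\{b_\xi \; | \; \xi \leq D_\nu(\alpha)\}$, again available since this set has size $< \mfb(R)$. Given any $f \in \bbb$, its index function $\hat f$ is $\leq^*$-dominated by some $D_\nu$, witnessed by $\gamma < \mfc$; for $\alpha \geq \gamma$ we get $\hat f(\alpha) \leq D_\nu(\alpha)$, hence $f(x_\alpha) R g_\nu(x_\alpha)$, so the bad set is bounded and lies in $\mathcal I$. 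Therefore $\{g_\nu \; | \; \nu < \mfd_\mfc\}$ is $R_\mathcal I$-dominating, giving $\mfd(R_\mathcal I) \leq \mfd_\mfc$.

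The step I expect to be the crux is the choice of the value of $g$ at each coordinate. The naive approach of fixing an $R$-scale $\langle b_\xi \rangle$ and setting $g(x_\alpha) = b_{G(\alpha)}$ works for $R = \leq^*$ by monotonicity, but fails for $\neq^*$ and $\in^*$, since these relations are \emph{not transitive}: knowing $f_i(x_\alpha) R b_{\hat f_i(\alpha)}$ with $\hat f_i(\alpha) \leq G(\alpha)$ says nothing about $f_i(x_\alpha) R b_{G(\alpha)}$. The device that repairs all three cases uniformly is to have $g(x_\alpha)$ $R$-bound the \emph{whole} initial segment $\{b_\xi \; | \; \xi \leq G(\alpha)\}$, so that it $R$-dominates the actual value $f_i(x_\alpha)$ directly (its index being $\leq G(\alpha)$) with no transitivity step. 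This is precisely where $\mfb(R) = \mfc$ is used: it guarantees that fewer than $\mfc$ reals always admit a common $R$-bound, and checking the three instances $\mfb(\leq^*) = \mfb = \mfc$, $\mfb(\neq^*) = {\rm non}(\Me) = \mfc$, and $\mfb(\in^*) = {\rm add}(\Null) = \mfc$ is routine.
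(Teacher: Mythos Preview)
Your proof is correct and follows essentially the same approach as the paper: both enumerate $\baire$, transfer functions to index functions in $\mfc^\mfc$, use $\mfb(R)=\mfc$ to bound initial segments of the enumeration, and use ${\rm non}(\mathcal I)=\mfc$ to conclude that a bad set of size ${<}\mfc$ lies in $\mathcal I$. The only cosmetic difference is that the paper precomputes a single sequence $\langle y_\alpha\rangle$ with $x_\beta R y_\alpha$ for all $\beta\leq\alpha$ and then sets $\Psi(f)(x_\alpha)=y_{f(\alpha)}$, whereas you recompute an $R$-bound for $\{b_\xi:\xi\leq D_\nu(\alpha)\}$ separately at each coordinate; your discussion of the ``crux'' (bounding the whole initial segment to avoid appealing to transitivity) is exactly the content of the paper's choice of $y_\alpha$.
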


\begin{proof}
Fix $R$ and $\mathcal I$ as in the statement of the lemma and assume $\mfb(R) = {\rm non}(\mathcal I) = \mfc$. Enumerate $\baire$ as $\{x_\alpha \; | \; \alpha < \mfc\}$ and for each $\alpha < \mfc$ fix an element $y_\alpha$ of $\baire$  (or $\mathcal S$ in the case that $R = \in^*$) so that for all $\beta \leq \alpha$, $x_\beta R y_\alpha$. Such a $y_\alpha$ exists by the assumption on $\mfb(R)$. Define $\Psi:\mfc^\mfc \to \bbb$ by letting $\Psi(f):\baire \to \baire$ be given by $\Psi(f)(x_\alpha) = y_{f(\alpha)}$. Let $D = \{f_\gamma \; | \; \gamma < \mfd_\mfc\} \subseteq \mfc^\mfc$ be a dominating family of minimal size. Now, let $\Psi(D) = \{\Psi(f_\gamma) \; |\; \gamma < \mfd_\mfc\}$. We claim that this is a $R_\mathcal I$ dominating family. To see this, fix $g:\baire \to \baire$ and consider $\hat{g}:\mfc \to \mfc$ given by $\hat{g}(\alpha) = \beta$ if and only if $g(x_\alpha) = x_\beta$. Now there is some $f_\gamma \in D$ which dominates $\hat{g}$ mod $<{\mfc}$. But then the set of reals $x \in \baire$ for which $\neg g(x) R \Psi(f_\gamma)(x)$ has size less than $\mfc$ and hence, by the assumption on ${\rm non}(\mathcal I)$ we get that $g R_\mathcal I \Psi(f_\gamma)$ as needed.

The other inequality is essentially proved by duality. More precisely, suppose $\kappa < \mfb_\mfc$ and $A \subseteq \bbb$ is a set of functions of size $\kappa$. Let $\hat{A} = \{\hat{g} \; | \; g \in A\}$ where $g \mapsto \hat{g}$ is as defined in the previous paragraph. Since $\hat{A}$ has size $\kappa$ it's bounded, say by some $f \in \mfc^\mfc$. Essentially the same proof as in the previous paragraph then shows $\Psi(f)$ is an $R_\mathcal I$-bound on $A$.
\end{proof}

\begin{lemma}
Fix $R \in \{\in^*, \leq^*, \neq^*\}$ and $\mathcal I \in \{\Kb, \Me, \Null\}$. If ${\rm add}(\mathcal I) = \mfc$ then $\mfd_\mfc (\neq^*) \leq \mfd(R_\mathcal I)$ and $\mfb(R_\mathcal I) \leq \mfb_\mfc(\neq^*)$.
\label{lowerboundD}
\end{lemma}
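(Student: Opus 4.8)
The plan is to package both inequalities into a single morphism of relational systems (a Galois--Tukey connection) from $(\mfc^\mfc,\mfc^\mfc,\neq^*)$ into $(\bbb,\bbb,R_\mathcal I)$, where the right-hand copy of $\bbb$ is replaced by $\{g:\baire\to\mathcal S\}$ when $R=\in^*$. Recall that a pair of maps $\phi_-:\mfc^\mfc\to\bbb$ and $\phi_+:(\text{$R$-bounds})\to\mfc^\mfc$ satisfying, for all $F$ and $g$,
\[
\phi_-(F)\,R_\mathcal I\,g\ \Longrightarrow\ F\neq^*\phi_+(g),
\]
simultaneously yields $\mfd_\mfc(\neq^*)\le\mfd(R_\mathcal I)$ and $\mfb(R_\mathcal I)\le\mfb_\mfc(\neq^*)$: one pushes an $R_\mathcal I$-dominating family forward by $\phi_+$ to a $\neq^*$-dominating family, and a $\neq^*$-unbounded family forward by $\phi_-$ to an $R_\mathcal I$-unbounded family. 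So it suffices to build one such connection, and a single construction will cover all three relations at once.

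The engine is the additivity hypothesis. First I would observe that ${\rm add}(\mathcal I)=\mfc$ forces $\cof(\mathcal I)=\mfc$: for $\mathcal I\in\{\Null,\Me\}$ every entry of Cicho\'n's diagram lies between ${\rm add}(\mathcal I)$ and $\mfc$, while for $\mathcal I=\Kb$ this follows from Rothberger's identities ${\rm add}(\Kb)=\mfb$, $\cof(\Kb)=\mfd$ together with $\mfb\le\mfd\le\mfc$. Since ${\rm add}(\mathcal I)$ is regular, $\mfc$ is regular, and I may fix a strictly increasing cofinal chain $\langle A_\beta:\beta<\mfc\rangle$ in $(\mathcal I,\subseteq)$ with $\bigcup_\beta A_\beta=\baire$ (take $A_\beta=\bigcup_{\alpha\le\beta}E_\alpha$ for a cofinal $\{E_\alpha\}$, which is in $\mathcal I$ by additivity, and thin to make it strictly increasing using that $\baire\notin\mathcal I$), together with representatives $p_\beta\in A_\beta\setminus\bigcup_{\gamma<\beta}A_\gamma$.

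I would then define the connection by a one-point ``anti-$R$'' copy at each level. Fix a bijection $e:\mfc\to\baire$ (or $e:\mfc\to\mathcal S$ when $R=\in^*$) and a map $b$ producing a witness of $\neg(b(y)\,R\,y)$: take $b(y)=y$ for $\neq^*$, $b(y)=y+1$ for $\leq^*$, and for $\in^*$ let $b(\sigma)$ be any real with $b(\sigma)(n)\notin\sigma(n)$ for all $n$. Set $\phi_-(F)(p_\beta)=b(e(F(\beta)))$ and $\phi_-(F)(x)=0$ off $\{p_\beta:\beta<\mfc\}$, and set $\phi_+(g)(\beta)=e^{-1}(g(p_\beta))$. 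If $F(\beta)=\phi_+(g)(\beta)$ then $e(F(\beta))=g(p_\beta)$, so $\phi_-(F)(p_\beta)=b(g(p_\beta))$ is not $R$-related to $g(p_\beta)$; hence $p_\beta$ lies in the exception set $B=\{x:\neg(\phi_-(F)(x)\,R\,g(x))\}$. Thus $\{p_\beta:\beta\in S\}\subseteq B$, where $S$ is the agreement set of $F$ and $\phi_+(g)$. If $S$ were unbounded, then, since each $E\in\mathcal I$ is covered by some $A_\gamma$ and $p_\beta\notin A_\gamma$ for $\beta>\gamma$, the set $B$ would escape every member of $\mathcal I$ and so lie outside $\mathcal I$; therefore $\phi_-(F)\,R_\mathcal I\,g$ forces $S$ bounded, i.e. $F\neq^*\phi_+(g)$, which is exactly the required morphism property.

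The step I expect to be the main obstacle is precisely this positivity argument, and it is where demanding only a \emph{single} coincidence per level is essential. One cannot force $\neg(\phi_-(F)(x)\,R\,g(x))$ on an $\mathcal I$-positive set directly --- for $\neq^*$ this would mean making a pre-chosen real infinitely often equal to an arbitrary $g(x)$, which is impossible --- so the $\mathcal I$-positivity of $B$ must come entirely from the geometry of the chain: because the chain is cofinal, coincidences occurring at cofinally many levels place points of $B$ above every $A_\gamma$, hence outside every set in $\mathcal I$. Once this is in place, the relational-system bookkeeping and the three cases of $b$ are routine, the only genuine content being the derivation of $\cof(\mathcal I)=\mfc$ and the cofinal-chain escape. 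As a sanity check I would note that, via the higher Cicho\'n diagram (Theorem \ref{cichondiagramtheorem}), it would even suffice to treat $R=\neq^*$ and let $\mfb(\neq^*_\mathcal I)$ and $\mfd(\neq^*_\mathcal I)$ dominate the remaining columns, but the uniform one-point construction dispatches all of $\in^*,\leq^*,\neq^*$ simultaneously.
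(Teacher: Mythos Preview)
Your argument is correct and is essentially the paper's proof repackaged: both fix an increasing cofinal chain $\langle A_\gamma:\gamma<\mfc\rangle$ in $\mathcal I$ (using ${\rm add}(\mathcal I)=\mfc$), select escape points outside the earlier $A_\gamma$'s, and translate agreement on $\mfc^\mfc$ into failure of $R_\mathcal I$ at those points. The only differences are presentational: you wrap both inequalities into a single Galois--Tukey morphism and treat all three $R$ uniformly via the anti-$R$ map $b$, whereas the paper argues only the case $R=\neq^*$ explicitly (its $\phi_+$ is your $g\mapsto F^{-1}\circ g\circ(\gamma\mapsto x_\gamma)$) and leaves $\leq^*,\in^*$ to the higher Cicho\'n diagram; one small point to watch is that your choice $p_\beta\in A_\beta\setminus\bigcup_{\gamma<\beta}A_\gamma$ needs the chain to be strictly increasing even at limits, which is easy to arrange but not automatic from ``thin to make it strictly increasing'' --- the paper's simpler choice $x_\gamma\notin A_\gamma$ sidesteps this.
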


Here recall that if $f, g \in \kappa^\kappa$ then $f\neq^* g$ if the set of $\alpha$ for which $f(\alpha) = g(\alpha)$ has size less than $\kappa$. The bounding and dominating numbers for this relation are denoted respectively $\mfb_\kappa(\neq^*)$ and $\mfd_\kappa(\neq^*)$, see \cite[Definition 16]{cichonunctble}.

\begin{proof}
Fix $R$ and $\mathcal I$ as in the statement of the lemma and assume ${\rm add}(\mathcal I) = \mfc$. We show that $\mfd_\mfc (\neq^*) \leq \mfd(\neq^*_\mathcal I)$. Let $\{A_\gamma \; | \; \gamma < \mfc\}$ be an increasing, cofinal family of sets from $\mathcal I$ (that such a family exists follows from ${\rm add}(\mathcal I) = \mfc$). Choose $\{x_\gamma \; | \; \gamma < \mfc\} \subseteq \baire$ so that for all $\gamma < \mfc$ we have $x_\gamma \notin A_\gamma$. Now fix an $\neq^*_\mathcal I$-dominating family $\mathcal F = \{f_\alpha \; | \; \alpha < \mfd(\neq^*_\mathcal I) \}$. Let $F:\mfc \to \baire$ be a bijection. Define $g_\alpha: \mfc \to \mfc$ by $g_\alpha(\gamma) = F^{-1}(f_\alpha(x_\gamma))$. We claim that $\{g_\alpha\; | \; \alpha < \mfd(\neq^*_\mathcal I)\}$ is a dominating family for the relation $\neq^*$ on $\mfc^\mfc$. Indeed suppose that $g:\mfc \to \mfc$ is arbitrary. Let $f:\baire \to \baire$ be such that $f(x_\gamma) = F(g(\gamma))$ (the other values of $f$ are irrelevant). By assumption we know that for some $\alpha < \mfd(\neq^*_\mathcal I)$ the set $\{x \; | \; \exists^\infty n \, f(x)(n) = f_\alpha(x)(n)\}$ is in ${\mathcal I}$ and so it lies in some $A_\gamma$. As a result for all but ${<}\mfc$ many $\gamma$ we have that $F(g(\gamma)) = f(x_\gamma) \neq^* f_\alpha(x_\gamma) = F(g_\alpha(\gamma))$ and thus $g(\gamma) \neq g_\alpha(\gamma)$ for all but ${<}\mfc$ many $\gamma$ as needed. 

The proof for the bounding numbers is dual to the one above. The details are left to the reader.
\end{proof}

Putting together Lemmas \ref{upperboundD} and \ref{lowerboundD} alongside the fact that if $\kappa$ is a successor cardinal for which $\kappa^{<\kappa} = \kappa$ then $\mfb_\kappa(\neq^*) = \mfb_\kappa$ and $\mfd_\kappa(\neq^*) = \mfd_\kappa$, see \cite[Theorem 21]{cichonunctble}, now implies Theorem \ref{CHtheorem2}.

\section{$\ZFC$ Results for the Bounding Numbers}
We begin by giving a number of $\ZFC$ results about the higher dimensional cardinals of the form $\mfb(R_\mathcal I)$ and their counterparts on $\omega$ that will be used to prove Main Theorem \ref{mainthm1}. The first lemma was originally proved in \cite[Lemma 3.2]{Switz19}. The proof is simply a standard diagonal argument, making use of the assumptions.

\begin{lemma}[Lemma 3.2 of \cite{Switz19}]
For each $\mathcal I \in \{\Null, \Me, \Kb\}$ and $R \in \{ \in^*, \leq^*, \neq^*\}$, if $\mathfrak{b}(R) = {\rm non} (\mathcal I) = \mfc$ then $\mfc^+ \leq \mathfrak{b}(R_\mathcal I)$.
\label{lemma5}
\end{lemma}

This lemma is actually strengthened by Lemma \ref{upperboundD}, however this is the version we will need in Section 4 so we quote it here. The next two lemmas show how relations between the cardinals on $R$ and $\mathcal I$ can have consequences for the value of $\mfb(R_\mathcal I)$.

\begin{lemma}
Suppose $\mfb (R) < {\rm cov}(\mathcal I)$. Then $\mfb (R_\mathcal I) = \mfb (R)$.
\label{lemmab<cov}
\end{lemma}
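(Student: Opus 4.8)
The plan is to prove the two inequalities separately. The lower bound $\mfb(R) \leq \mfb(R_\mathcal{I})$ is already available: it is precisely the left-hand inequality of Lemma \ref{omegabounds} and needs no hypothesis at all. So the entire content of the lemma is the reverse inequality $\mfb(R_\mathcal{I}) \leq \mfb(R)$, and for this I would exhibit an explicit $R_\mathcal{I}$-unbounded family of size $\mfb(R)$, invoking the hypothesis $\mfb(R) < {\rm cov}(\mathcal{I})$ at exactly one point.

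First I would fix an $R$-unbounded family $\{h_\alpha \; | \; \alpha < \mfb(R)\} \subseteq \baire$ of minimal size; by definition of $\mfb(R)$ this means that no single object $y$ (a real, or a slalom in $\mathcal{S}$ when $R = \in^*$) satisfies $h_\alpha \mathbin{R} y$ for all $\alpha$. The idea is to lift each $h_\alpha$ to a \emph{constant} function $f_\alpha \in \bbb$ by setting $f_\alpha(x) = h_\alpha$ for every $x \in \baire$. This gives a family $\{f_\alpha \; | \; \alpha < \mfb(R)\}$ of the required cardinality, and I claim it is $R_\mathcal{I}$-unbounded.

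To verify the claim I would argue by contradiction: suppose some $g$ (a function $\baire \to \baire$, or $\baire \to \mathcal{S}$ when $R = \in^*$) is an $R_\mathcal{I}$-bound, so that $f_\alpha \mathbin{R_\mathcal{I}} g$ for every $\alpha$. Unravelling the definition of $R_\mathcal{I}$ and of the $f_\alpha$, this says exactly that each set $B_\alpha := \{x \in \baire \; | \; \neg(h_\alpha \mathbin{R} g(x))\}$ belongs to $\mathcal{I}$. Now the hypothesis enters: we have $\mfb(R)$ many sets from $\mathcal{I}$, and since $\mfb(R) < {\rm cov}(\mathcal{I})$ their union cannot exhaust $\baire$. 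Hence there is a point $x^\ast \in \baire \setminus \bigcup_{\alpha < \mfb(R)} B_\alpha$. For this $x^\ast$ we have $h_\alpha \mathbin{R} g(x^\ast)$ for every $\alpha$, so the single object $g(x^\ast)$ is an $R$-bound for $\{h_\alpha \; | \; \alpha < \mfb(R)\}$, contradicting the $R$-unboundedness of that family. This contradiction finishes the argument.

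I do not anticipate a genuine obstacle here: the argument is uniform across the three choices of $R$ (the only difference being whether the bound $g(x^\ast)$ is read as a real or as a slalom) and across the three ideals, since the sole property of $\mathcal{I}$ used is its covering number. The one point deserving a word of care is simply recording that the constant functions $f_\alpha$ are legitimate elements of $\bbb$ and that $\neg(f_\alpha(x) \mathbin{R} g(x))$ is literally $\neg(h_\alpha \mathbin{R} g(x))$, so that $B_\alpha$ coincides with the failure set appearing in the definition of $R_\mathcal{I}$; everything else is bookkeeping.
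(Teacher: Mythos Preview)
Your proposal is correct and takes essentially the same approach as the paper: both lift an $R$-unbounded family to constant functions in $\bbb$ and use the hypothesis $\mfb(R) < {\rm cov}(\mathcal I)$ to derive a contradiction from a putative bound. The only cosmetic difference is that the paper packages the covering argument via an auxiliary choice function $h:\baire \to A$ and observes that some preimage $h^{-1}(\{x_\alpha\})$ must fall outside $\mathcal I$, whereas you work directly with the failure sets $B_\alpha$ and pick a point $x^\ast$ avoiding their union; these are the same idea, and your formulation is arguably the more transparent of the two.
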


\begin{proof}
By Lemma \ref{omegabounds} it suffices to prove that under the hypothesis $\mfb(R_\mathcal I) \leq \mfb(R)$. Let $A = \{x_\alpha \; | \; \alpha < \mfb(R)\}$ be an $R$-unbounded family of minimal size and for each $\alpha < \mfb(R)$ let $c_\alpha:\baire \to \baire$ be the constant function with value $x_\alpha$. We claim that the family $\{c_\alpha \; | \; \alpha< \mfb(R)\}$ forms an $R_\mathcal I$-unbounded family. To see this, suppose that $f:\baire \to \baire$ (or $f:\baire \to \mathcal S$) were a bound. Define a function $h \in \bbb$ so that for each $x\in \baire$ let $h(x) \in A$ be $R$-unbounded by $f(x)$. Since no family of sets in $\mathcal I$ of size $\mfb(R)$ is covering, there must be some $x_\alpha \in A$ so that $h^{-1}(\{x_\alpha\})$ is not in $\mathcal I$. But this means that on a non-$\mathcal I$ set $c_\alpha$ is not bounded by $f$, so $f$ is not a bound on $\{c_\alpha \; | \; \alpha < \mfb(R)\}$ as needed.
\end{proof}

\begin{lemma}
If ${\rm cov} (\mathcal I) = \aleph_1$ then $\mfb (\in^*_\mathcal I) \geq \aleph_2$.
\label{lemmacov=1}
\end{lemma}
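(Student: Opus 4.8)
The plan is to prove the statement in the contrapositive, in the following sharp form: every family $A \subseteq \bbb$ with $|A| \leq \aleph_1$ is $\in^*_\mathcal I$-bounded. Since $\mfb(\in^*_\mathcal I)$ is by definition the least size of an $\in^*_\mathcal I$-unbounded family, this yields $\mfb(\in^*_\mathcal I) \geq \aleph_2$ at once. So I would fix $A = \{f_\alpha \; | \; \alpha < \omega_1\} \subseteq \bbb$ and aim to produce a single $g:\baire \to \mathcal S$ witnessing boundedness, i.e.\ with $\{x \; | \; \neg(f_\alpha(x) \in^* g(x))\} \in \mathcal I$ for every $\alpha < \omega_1$.

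First I would convert the covering hypothesis into a usable form. Using ${\rm cov}(\mathcal I) = \aleph_1$, fix a cover $\baire = \bigcup_{\xi < \omega_1} A_\xi$ with each $A_\xi \in \mathcal I$, and replace it by the increasing sequence $B_\xi = \bigcup_{\eta \leq \xi} A_\eta$. The point is that each $B_\xi$ is a \emph{countable} union of members of $\mathcal I$ (as $\xi < \omega_1$), so since $\Null$, $\Me$ and $\Kb$ are all $\sigma$-ideals (equivalently ${\rm add}(\mathcal I) \geq \aleph_1$) we still have $B_\xi \in \mathcal I$, while $\langle B_\xi \; | \; \xi < \omega_1\rangle$ is increasing with union $\baire$. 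For $x \in \baire$ set $r(x) = \min\{\xi \; | \; x \in B_\xi\}$, which is well defined since the $B_\xi$ cover $\baire$. The two facts I would record are: $r(x)$ is always a \emph{countable} ordinal, and by monotonicity $r(x) > \alpha \iff x \notin B_\alpha$; the latter equivalence is what will localise each bad set inside a member of $\mathcal I$.

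Next I would define $g(x)$ pointwise. Because $r(x)$ is countable, the set $\{f_\alpha(x) \; | \; \alpha < r(x)\}$ is a countable set of reals, and a single slalom captures countably many reals mod finite by the standard diagonal packing. Concretely, fix an enumeration $\langle \alpha^x_i \; | \; i < \theta_x \rangle$ of the ordinals below $r(x)$, where $\theta_x \leq \omega$, and put $g(x)(n) = \{ f_{\alpha^x_i}(x)(n) \; | \; i < \min(n,\theta_x) \}$, so that $|g(x)(n)| \leq n$ and $g(x)$ is a genuine slalom. Then for every $\alpha < r(x)$, writing $\alpha = \alpha^x_{i_0}$, we get $f_\alpha(x)(n) \in g(x)(n)$ for all $n > i_0$, i.e.\ $f_\alpha(x) \in^* g(x)$. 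Consequently, for a fixed $\alpha < \omega_1$ the function $g$ captures $f_\alpha$ at every $x$ with $\alpha < r(x)$, i.e.\ at every $x \notin B_\alpha$, so $\{x \; | \; \neg(f_\alpha(x) \in^* g(x))\} \subseteq B_\alpha \in \mathcal I$ and hence $f_\alpha \in^*_\mathcal I g$. As $\alpha$ was arbitrary, $g$ bounds $A$.

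I expect the only genuine content to be the observation that drives the whole argument: even though $A$ is uncountable, at each individual point $x$ only countably many of the values $f_\alpha(x)$ must be captured by the single slalom $g(x)$, namely those indexed below the countable rank $r(x)$, and this is exactly what a slalom of width $n$ can absorb via diagonalisation. The two remaining points are routine: that the initial unions $B_\xi$ stay in $\mathcal I$ (where I use that $\mathcal I$ is a $\sigma$-ideal), and that the bad set sits inside $B_\alpha$ (via the rank equivalence). I would remark that the same scheme in fact bounds any $\aleph_1$-family for $\leq^*$ and $\neq^*$ as well, since countably many reals admit a common $\leq^*$-upper bound and a common $\neq^*$-witness; stating the result for $\in^*$ is strongest because $\mfb(\in^*_\mathcal I)$ is the smallest of the three in Figures \ref{figurenull} and \ref{figuremeager}.
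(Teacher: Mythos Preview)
Your proof is correct and follows essentially the same approach as the paper's: use the $\aleph_1$-cover by $\mathcal I$-sets to assign each $x$ a countable ordinal rank, let $g(x)$ be a slalom capturing the countably many $f_\alpha(x)$ with $\alpha$ below that rank, and observe that for fixed $\alpha$ the bad set lies in a countable union of members of $\mathcal I$. The only cosmetic differences are that the paper passes to a disjoint cover rather than forming the increasing unions $B_\xi$, and simply invokes the existence of a capturing slalom rather than writing out the diagonal packing explicitly.
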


\begin{proof}
Fix a family $\{A_\alpha \; | \; \alpha < \aleph_1\}$ of sets from $\mathcal I$ covering $\baire$. Without loss, assume that they are disjoint. Let $\{f_\alpha \; | \; \alpha < \aleph_1\}$ be a family of functions $f_\alpha : \baire \to \baire$. We need to provide a bound $g:\baire \to \mathcal S$. We can do this by defining for $x \in A_\alpha$, $g(x)$ to be any slalom bounding the countable set $\{f_\beta (x) \; | \; \beta < \alpha \}$. Now, suppose that $\alpha < \omega_1$ and observe that for all $x \notin \bigcup_{\beta \leq \alpha} A_\beta$ we have that $f_\alpha(x) \in^* g(x)$ and since the latter is in $\mathcal I$ (since it's a countable union), we're done.
\end{proof}

A surprising corollary of this result is the following.
\begin{proposition}
$\mfb (\neq^*_\Null) \geq \aleph_2$.
\label{neqN}
\end{proposition}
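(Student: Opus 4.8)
The plan is to prove the bound by a case analysis on the value of ${\rm cov}(\Null)$, which is of course always at least $\aleph_1$. The statement is presented as a corollary of Lemma \ref{lemmacov=1}, but that lemma only delivers a lower bound of $\aleph_2$ under the hypothesis ${\rm cov}(\Null) = \aleph_1$, so the real content of the argument lies in seeing that nothing is lost when ${\rm cov}(\Null)$ is large.

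In the case ${\rm cov}(\Null) = \aleph_1$, I would simply invoke Lemma \ref{lemmacov=1} to get $\mfb(\in^*_\Null) \geq \aleph_2$ and then read off from the higher Cicho\'{n} diagram of Figure \ref{figurenull} (Theorem \ref{cichondiagramtheorem}) the chain $\mfb(\in^*_\Null) \leq \mfb(\leq^*_\Null) \leq \mfb(\neq^*_\Null)$, which transfers the bound upward to $\mfb(\neq^*_\Null) \geq \aleph_2$. In the complementary case ${\rm cov}(\Null) \geq \aleph_2$, I would discard Lemma \ref{lemmacov=1} and instead concatenate three inequalities that are already in hand: Lemma \ref{omegabounds} gives $\mfb(\neq^*) \leq \mfb(\neq^*_\Null)$, Miller's theorem gives $\mfb(\neq^*) = {\rm non}(\Me)$, and the classical Cicho\'{n} diagram gives ${\rm cov}(\Null) \leq {\rm non}(\Me)$. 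Together these yield $\mfb(\neq^*_\Null) \geq {\rm non}(\Me) \geq {\rm cov}(\Null) \geq \aleph_2$, completing this case and hence the proof.

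I expect the only subtle point to be the second case, and it is exactly where the choice of the relation $\neq^*$ matters. Had one tried to prove the analogous statement for $\in^*$, the lower bound available from Lemma \ref{omegabounds} would be only $\mfb(\in^*) = {\rm add}(\Null)$, and since ${\rm add}(\Null)$ can be $\aleph_1$ while ${\rm cov}(\Null)$ is arbitrarily large (for instance in the random model), the argument would collapse. Thus the crux is recognizing that the Cicho\'{n} inequality ${\rm cov}(\Null) \leq {\rm non}(\Me) = \mfb(\neq^*)$ is precisely what controls the large-covering regime; once this auxiliary inequality is identified, the remaining steps are routine bookkeeping across the two cases.
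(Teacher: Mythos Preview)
Your proposal is correct and matches the paper's proof essentially line for line: the same case split on ${\rm cov}(\Null)$, the same appeal to Lemma~\ref{lemmacov=1} together with $\mfb(\in^*_\Null) \leq \mfb(\neq^*_\Null)$ in the first case, and the same chain ${\rm cov}(\Null) \leq {\rm non}(\Me) = \mfb(\neq^*) \leq \mfb(\neq^*_\Null)$ in the second. Your closing remark explaining why the argument is specific to $\neq^*$ is a nice addition not present in the paper.
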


\begin{proof}
If ${\rm cov}(\Null) = \aleph_1$ then the result follows from Lemma \ref{lemmacov=1} plus the fact that $\mfb(\in^*_\Null) \leq \mfb(\neq^*_\Null)$. Otherwise ${\rm cov}(\Null) > \aleph_1$. Since ${\rm cov}(\Null) \leq \mfb (\neq^*) = {\rm non}(\Me)$ (in $\ZFC$) we get that $\aleph_2 \leq \mfb(\neq^*) \leq \mfb(\neq^*_\Null)$.
\end{proof}

Oddly enough the other eight numbers of the form $\mfb(R_\mathcal I)$ can all be $\aleph_1$ as we will see. Lemmas \ref{lemma5} and \ref{lemmacov=1} are both actually  special cases of a more general lemma which we prove now.

\begin{lemma}
Fix a cardinal $\kappa$ and assume that there is a family $\mathcal X = \{X_\alpha \; | \; \alpha < \kappa\} \subseteq\mathcal I$ so that for all $\mathcal Y \subseteq \mathcal X$ if $|\mathcal Y| = \mfb(R)$ then $\bigcup \mathcal Y = \baire$ (i.e. $\mathcal Y$ is covering). Then $\mfb(R_\mathcal I) > \kappa$. 
\label{coveringfamilylemma}
\end{lemma}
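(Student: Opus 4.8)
The plan is to prove the contrapositive form of the conclusion directly: I will show that \emph{every} family of at most $\kappa$ functions is $R_\mathcal I$-bounded, which is exactly $\mfb(R_\mathcal I) > \kappa$. So fix any such family and index it (with repetitions if necessary) as $\{f_\alpha \mid \alpha < \kappa\}$, where each $f_\alpha : \baire \to \baire$. I will produce a single bound $g$, a function $\baire \to \baire$, or $\baire \to \mathcal S$ in the case $R = \in^*$, by a pointwise diagonalization: for each $x \in \baire$ I define $g(x)$ to be an $R$-bound of a carefully chosen subcollection of the values $f_\alpha(x)$, and then the covering hypothesis on $\mathcal X$ will guarantee that the ``error set'' of each $f_\alpha$ is swallowed by $X_\alpha \in \mathcal I$.

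The engine is a counting observation. For $x \in \baire$ put $I_x = \{\alpha < \kappa \mid x \notin X_\alpha\}$, and I claim $|I_x| < \mfb(R)$. Indeed, reading $\mathcal X$ as an injectively indexed family, if $|I_x| \geq \mfb(R)$ then $\{X_\alpha \mid \alpha \in I_x\}$ contains a subfamily $\mathcal Y \subseteq \mathcal X$ of size exactly $\mfb(R)$; by hypothesis $\mathcal Y$ is covering, so $x \in \bigcup \mathcal Y$ and hence $x \in X_\alpha$ for some $\alpha \in I_x$, contradicting $\alpha \in I_x$. Since $|I_x| < \mfb(R)$, the set of values $\{f_\alpha(x) \mid \alpha \in I_x\}$ has size $< \mfb(R)$ and is therefore $R$-bounded, as $\mfb(R)$ is by definition the least size of an $R$-unbounded set. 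Using choice, I let $g(x)$ be such a bound (a real when $R \in \{\leq^*, \neq^*\}$, a slalom capturing all these $f_\alpha(x)$ when $R = \in^*$). To finish, fix $\alpha < \kappa$: whenever $x \notin X_\alpha$ we have $\alpha \in I_x$ and so $f_\alpha(x) R g(x)$ by construction, which means $\{x \mid \neg (f_\alpha(x) R g(x))\} \subseteq X_\alpha$. Since $X_\alpha \in \mathcal I$ and $\mathcal I$ is an ideal, this witnessing set is in $\mathcal I$, i.e.\ $f_\alpha R_\mathcal I g$. Thus $g$ bounds the whole family.

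This is a routine diagonalization, so I do not expect a serious obstacle; the only genuine content is the counting claim $|I_x| < \mfb(R)$, which is precisely where the covering property of $\mathcal X$ is consumed. The one point requiring care is the bookkeeping behind that claim: one should read $\mathcal X$ as indexed injectively, so that an index set $I_x$ of size $\geq \mfb(R)$ really does supply $\mfb(R)$-many \emph{distinct} covering sets. In the intended applications, Lemmas \ref{lemma5} and \ref{lemmacov=1}, the families $\mathcal X$ are visibly pairwise distinct, so no generality is lost. I would also double-check that the pointwise choice of $g(x)$ is legitimate uniformly across the three relations, which it is, since each of $\leq^*$, $\neq^*$, $\in^*$ admits a single-element $R$-bound for any subfamily of size below $\mfb(R)$.
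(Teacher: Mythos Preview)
Your proof is correct and follows essentially the same approach as the paper's: both arguments observe that for each $x$ the set $\{\alpha \mid x \notin X_\alpha\}$ has size ${<}\mfb(R)$ by the covering hypothesis, then define $g(x)$ to be an $R$-bound on the corresponding values $f_\alpha(x)$, so that the error set for each $f_\alpha$ is contained in $X_\alpha$. Your extra care about injective indexing is a reasonable clarification, but otherwise the two proofs are identical in structure.
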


\begin{proof}
Fix $R$, $\mathcal I$ and $\kappa$ and assume there is a family $\mathcal X = \{X_\alpha \; |\; \alpha < \kappa\}$ as in the statement of the lemma. We need to show that no family of functions $\{f_\alpha \; | \; \alpha < \kappa\} \subseteq \bbb$ is $R_\mathcal I$-unbounded. Fix such a family. We will define a bound on it. First notice that for each $x \in \baire$ the set $\{\gamma \; | \; x \notin X_\gamma\}$ has size ${<}\mfb(R)$ since every family of size $\mfb(R)$ is covering. Now define $g:\baire \to \baire$ so that for all $x \in \baire$ we have $g(x)$ is an $R$-bound on $\{f_\gamma(x) \; | \; x \notin X_\gamma\}$. We claim that his $g$ $R_\mathcal I$-dominates every $f_\alpha$. To see this, fix $\alpha < \kappa$ and consider the set of $x$ so that $\neg ( f_\alpha(x) R g(x) )$. If $x$ is in this set, then by the definition of $g$ we have that $x \in X_\alpha$. Since this later set is in $\mathcal I$ this completes the proof.
\end{proof}

Every computation of a $\mfb(R_\mathcal I)$ cardinal in this paper (and in \cite{Switz19}) factors through one of Lemmas \ref{lemma5}, \ref{lemmacov=1} or \ref{coveringfamilylemma}. Since the later generalizes the former two, every model in which a cardinal of the form $\mfb(R_\mathcal I)$ is greater than $\mfc$ satisfies the hypothesis of Lemma \ref{coveringfamilylemma} with $\kappa$ at least the size of the continuum. We do not know if this is necessary or not. Families of sets $\mathcal X$ as described in Lemma \ref{coveringfamilylemma} are called $(\kappa, \lambda)$-Rothberger families for $\mathcal I$ where $\kappa$ is the cardinality of $\mathcal X$ and $\lambda$ is so that every subfamily of $\mathcal X$ of size $\lambda$ is covering. In this terminology, Lemma \ref{coveringfamilylemma} can be rephrased as saying that the existence of a $(\kappa, \mfb(R))$-Rothberger family for $\mathcal I$ implies that $\mfb(R_\mathcal I) > \kappa$. Such families were investigated in \cite{Cichon89}, see in particular Definition 2.2. In connection with this, note that the case where $\kappa < \mfb(R)$ holds vacuously (both the antecedent and the conclusion) so the interesting case is when $\kappa$ is at least $\mfb(R)$. In this case we obviously have ${\rm cov}(\mathcal I) \leq \mfb(R)$ yet this is not sufficient as, e.g. ${\rm cov}(\Null) = \mfb(\neq^*) = 2^{\aleph_0}$ in the random model but given any family of continuum many null sets we can easily find a subfamily also of size continuum and a random real which escapes all of them. Indeed, we will see in Theorem \ref{random} that $\mfb(\neq^*_\Null) = 2^{\aleph_0}$ so the conclusion of the lemma above is not simply implied by ${\rm cov}(\mathcal I) \leq \mfb(R)$ even. Nevertheless, replacing the null ideal by the meager ideal, such a family does exist in the random model and this is what is used to compute the cardinals $\mfb(\in_\mathcal M)$, $\mfb(\leq^*_\Me)$ and $\mfb(\neq^*_\Me)$ in the random model in Section 4.

\section{Consistency Results for the Bounding Numbers}

In this section we study the values of the bounding numbers in Figures 1 and 2 in various classical models of set theory such as the Cohen, Random and Sacks models. The basic plan is this: we will iterate some well known forcing over a model of $\GCH$ to get a model where $\mfc$ is some specified $\kappa$ and $2^\mfc = \mfc^+$. It will follow from Corollary \ref{lemmamfd} below, which states that $\mfd(R_\mathcal I) > 2^{\aleph_0}$ for all $R$ and $\mathcal I$, that in each model all of the $\mfd (R_\mathcal I)$ cardinals will be $\mfc^+$ so we focus on the $\mfb (R_\mathcal I)$ cardinals. From now on let us fix an arbitrary regular cardinal $\kappa > \aleph_1$. We will need one forcing theoretic lemma.

\begin{lemma}
Assume $\GCH$ and suppose $\mathbb P$ is a finite support or countable support product of proper forcing notions so that if $G \subseteq \mathbb P$ is $V$-generic then in $V[G]$ the reals of $V$ are $\mathcal I$ positive and $R$-unbounded and the set of Borel codes for elements of $\mathcal I$ in $V$ form a Borel basis for $\mathcal I$ in $V[G]$ when reinterpreted. Then the set of functions $f \in \bbb \cap V$ extended arbitrarily to the new reals form an $R_\mathcal I$-unbounded family. In particular, $\forces_\mathbb P \mfb (R_\mathcal I) \leq \aleph_2$.
\label{lemma4}
\end{lemma}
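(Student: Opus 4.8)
The plan is to fix an arbitrary candidate bound $g$ in $V[G]$ (a function $\baire\to\baire$, or $\baire\to\mathcal S$ when $R=\in^*$) and to produce a single ground model $f\in\bbb\cap V$ that $g$ fails to $R_\mathcal I$-bound. Since the extension of $f$ to the new reals is irrelevant to whether $g$ bounds it — only the behaviour on the old reals $\baire^V$ is under our control — I work throughout with restrictions to $\baire^V$. First I would record the translation supplied by the hypotheses: because the $V$-coded Borel $\mathcal I$-sets form a basis for $\mathcal I$ in $V[G]$, any $B\subseteq\baire^V$ lying in $\mathcal I^{V[G]}$ is contained in the reinterpretation of some Borel $\mathcal I$-set coded in $V$, and by Shoenfield absoluteness its trace on $\baire^V$ coincides with the $V$-interpretation of that code. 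Hence ``$f$ is not $R_\mathcal I$-bounded by $g$'' is equivalent to ``$B_f:=\{x\in\baire^V:\neg(f(x)\,R\,g(x))\}$ is covered by no $\mathcal I$-set coded in $V$'', i.e.\ that $B_f$ is $\mathcal I$-positive; and by the positivity hypothesis $\baire^V$ itself has this property.

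Second, I would argue by contradiction. Suppose $g$ $R_\mathcal I$-bounds the whole family; then for every $f\in\bbb\cap V$ the set $B_f$ is $\mathcal I$-small in the above sense. The role of $R$-unboundedness is to manufacture a diagonal witness: working in $V[G]$ and using a fixed $V$-wellordering of $\baire^V$, define $\Phi\colon\baire^V\to\baire^V$ by letting $\Phi(x)$ be the least old real with $\neg(\Phi(x)\,R\,g(x))$, which exists precisely because $\baire^V$ is $R$-unbounded over the single real $g(x)$. For any $f\in V$ the agreement set $\{x:f(x)=\Phi(x)\}$ is contained in $B_f$, hence $\mathcal I$-small. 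Thus a bound for the family yields a function $\Phi\in V[G]$ from $\baire^V$ to $\baire^V$ agreeing with \emph{every} ground model function only on an $\mathcal I$-small set; conversely, exhibiting one $f\in V$ that meets $\Phi$ on an $\mathcal I$-positive set (indeed, any $f$ with $B_f$ positive) produces the desired contradiction.

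The heart of the matter, and what I expect to be the main obstacle, is to show that no such ``$\mathcal I$-evading'' $\Phi$ can be added. Properness of $\mathbb P$ must enter here: the three hypotheses alone do not suffice, since a naive covering argument only yields $\baire^V=\bigcup_{v\in\baire^V}B_{c_v}$, a union of $\aleph_1$ many $\mathcal I$-sets, which is no contradiction because $\mathcal I$ contains singletons. Instead I would take a name $\dot g$ and a continuous $\in$-chain $\langle M_\eta:\eta<\omega_1\rangle$ of countable elementary submodels of some $H_\theta$ containing $\mathbb P,\dot g$, the parameter $\baire^V$, and the relevant enumerations, and (using properness) a condition $q$ that is $(M_\eta,\mathbb P)$-generic for club-many $\eta$. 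For such $\eta$ and each old real $x\in M_\eta$ one has $g(x)\in M_\eta[G]$, so by elementarity and the reflected $R$-unboundedness there is an escaping old real already inside $M_\eta\cap\baire^V$; reading these choices off the countably many names in $M_\eta$ assembles a genuine ground model function $f$. The delicate point is to arrange that the resulting escape set meets the complement of every $V$-coded $\mathcal I$-set: here the $\sigma$-ideal property of $\mathcal I$ (to discard countable unions computed inside each $M_\eta$) together with the positivity hypothesis (to guarantee survival on a non-covered, hence $\mathcal I$-positive, set) are exactly what make the captured set positive.

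Finally, the stated bound is immediate: under $\GCH$ we have $|\bbb\cap V|=(\mfc^{\mfc})^V=\aleph_2^V$, and since $\mathbb P$ is proper, and so preserves $\aleph_1$, this family has size at most $\aleph_2$ in $V[G]$; having shown it is $R_\mathcal I$-unbounded, we conclude $\forces_{\mathbb P}\mfb(R_\mathcal I)\le\aleph_2$.
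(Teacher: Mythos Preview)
Your outline diverges from the paper's argument and has a genuine gap at precisely the step you yourself flag as ``delicate''. The paper makes essential use of the \emph{product} hypothesis, which your approach never invokes. Given a candidate bound $g\in V[G]$, the paper observes that $g\upharpoonright(\baire)^V$ is already added by a sub-product $\mathbb P_X$ with $|X|=\aleph_1$; under $\GCH$ this forcing has size $\aleph_1$. Working in $V$, one then enumerates in type $\omega_1$ all pairs $(p_\alpha,B_\alpha)$ with $p_\alpha\in\mathbb P_X$ a condition and $B_\alpha$ a $V$-coded Borel $\mathcal I$-set, coupled with an enumeration $\{x_\alpha:\alpha<\omega_1\}$ of $(\baire)^V$ arranged so that $x_\alpha\notin B_\alpha$. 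At stage $\alpha$ one extends $p_\alpha$ to some $q_\alpha$ deciding a ground-model value $h(x_\alpha)$ that is not $R$-below $\dot g(\check x_\alpha)$ (possible by $R$-unboundedness of the $V$-reals). The resulting $h$ lies in $V$, and a one-line density argument shows no condition can force $\{x:\neg(h(x)\,R\,g(x))\}\subseteq B$ for any $V$-coded $B\in\mathcal I$.

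Your elementary-chain construction does not produce a function in $V$. You correctly note that for $x\in M_\eta$ an escaping old real can be found inside $M_\eta\cap(\baire)^V$, but the \emph{choice} of that real depends on $g(x)\in V[G]$, so the assignment $x\mapsto f(x)$ you describe lives in $V[G]$, not in $V$. The phrase ``reading these choices off the countably many names in $M_\eta$'' does not fix this: without deciding those names below specific conditions (a decide-and-record step, as in the paper) you cannot pin the values down in the ground model, and you have not indicated how the $\mathcal I$-positivity of the escape set would be secured even if you could. Moreover, any such decide-and-record argument must diagonalize against all pairs (condition, $\mathcal I$-set), and it is the product structure---not mere properness---that cuts the relevant conditions down to an $\aleph_1$-sized set so that this diagonalization fits into an $\omega_1$-length construction. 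Your final paragraph on the cardinality bound $\mfb(R_\mathcal I)\le\aleph_2$ is fine.
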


\begin{proof}
Let $\{g_\alpha \; | \; \alpha < \aleph_2\}$ be any set of functions so that the set of restrictions to $V \cap \baire$ is exactly the functions in $V \cap \bbb$. We need to show that this set is unbounded. Fix a function $f:\baire \to \baire$ in $V[G]$. By restricting $f$ to $V \cap \baire$, the fact that the iteration is a product and $\CH$ holds implies that we can find a set of size $\aleph_1$, say $X \subseteq \kappa$ so that $f$ is in fact added by the product restricted to this set, call it $\mathbb P_X$, and this forcing has size $\aleph_1$. Work in $V$ now, and let $\dot{f}$ be the name for $f$. Without loss assume the maximal condition forces $\dot{f}$ is a function from $\baire$ to $\baire$. Enumerate baire space as $\baire = \{x_\alpha \; | \; \alpha < \aleph_1\}$. Also enumerate all pairs $(p_\alpha, B_\alpha)_{\alpha < \omega_1}$ in order type $\omega_1$ of conditions from $\mathbb P_X$ and Borel $\mathcal I$ sets coded in $V$ so that $x_\alpha \notin B_\alpha$. For each $\alpha$ let $q_\alpha \leq p_\alpha$ decide some ground model real not $R$-bounded by $\dot{f} (\check{x}_\alpha)$. Note that such a real exists in the ground model by assumption. Let (in $V$) $h \in \bbb$ be the function which on $x_\alpha$ takes the value $q_\alpha$ decides is unbounded by $\dot{f}$. We claim that for any $\bar{h}$ in $V[G]$ extending $h$ it's not the case that $\bar{h}$ is $R_\mathcal I$-bounded by $\dot{f}$. Indeed, otherwise there is some $\alpha$ so that $p_\alpha \forces \{x \; | \;\neg \check{h}(x) R \dot{f}(x)\} \subseteq B_\alpha$ since the Borel sets in $\mathcal I$ from the ground model form a basis, but $q_\alpha \leq p_\alpha$ and $q_\alpha$ forces that $\neg \check{h}(x_\alpha) R \dot{f}(x_\alpha)$ and $x_\alpha \notin B_\alpha$, which is a contradiction. 
\end{proof}

\subsection{The Cohen Model}
The Cohen model is the model obtained by forcing with the finite support product of $\kappa$ many copies of Cohen forcing over a model of $\GCH$, see \cite[Model 7.5.8, p. 386]{BarJu95}. We will prove the following theorem.
\begin{theorem}
In the Cohen model the following equalities hold. 

\begin{enumerate}
\item
For all $R \in \{\in^*, \leq^*, \neq^*\}$ we have $\mfb (R_\mathcal K) = \mfb (R_\Me) = \aleph_1$ 
\item
$\mfb (R_\Null) = \kappa^+$ 
\end{enumerate}
In words, the numbers associated with $\Me$ and $\Kb$ are $\aleph_1$ and those associated with $\Null$ are $\kappa^+$.
\label{cohenmodel}
\end{theorem}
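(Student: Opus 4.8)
The plan is to prove the two items by quite different means: item (1) falls out of Lemma~\ref{lemmab<cov} once the relevant classical invariants are recorded, while item (2) requires a genuine construction feeding Lemma~\ref{coveringfamilylemma}. Throughout, $V[G]$ denotes the Cohen model, with $G$ generic for the finite support product $\mathbb C_\kappa$ of $\kappa$ copies of Cohen forcing over a model of $\GCH$, so that $\mfc = \kappa$ and $2^\kappa = \kappa^+$.

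For item (1) I would first recall that in $V[G]$ one has $\mfb(\leq^*) = \mfb = \aleph_1$, $\mfb(\neq^*) = {\rm non}(\Me) = \aleph_1$ and $\mfb(\in^*) = {\rm add}(\Null) = \aleph_1$, so $\mfb(R) = \aleph_1$ for every $R$, while ${\rm cov}(\Me) = \mfc = \kappa$ and, by Rothberger's characterization of $\Kb$, ${\rm cov}(\Kb) = \mfd = \kappa$ (see \cite[Section 7.5]{BarJu95}). Since $\aleph_1 = \mfb(R) < \kappa = {\rm cov}(\Me) = {\rm cov}(\Kb)$, Lemma~\ref{lemmab<cov} gives $\mfb(R_\Me) = \mfb(R_\Kb) = \mfb(R) = \aleph_1$ at once, which is item~(1). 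Note that Lemma~\ref{lemma4} is inapplicable here: a Cohen real makes the ground model reals meager and null (killing $\Me$- and $\Null$-positivity) and adds $\leq^*$-unbounded reals (so ground model $\Kb$-codes fail to form a basis, since a Cohen real lies in no ground-model-coded $\sigma$-compact set), so its hypotheses fail for all three ideals.

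For item (2) the upper bound is soft: Lemma~\ref{omegabounds} gives $\mfb(R_\Null) \le \mfb(R)^{{\rm non}(\Null)} = \aleph_1^\kappa = 2^\kappa = \kappa^+$, using ${\rm non}(\Null) = \mfc = \kappa$ and $2^{\aleph_0} = \kappa$ (whence $\aleph_1^\kappa = \kappa^\kappa = 2^\kappa$). For the lower bound $\mfb(R_\Null) > \kappa$ I would apply Lemma~\ref{coveringfamilylemma}: since $\mfb(R) = \aleph_1$, it suffices to exhibit a family $\{N_\gamma : \gamma < \kappa\}$ of null sets such that every real lies in all but countably many $N_\gamma$ (so every subfamily of size $\aleph_1$ covers $\baire$), i.e. a $(\kappa, \aleph_1)$-Rothberger family for $\Null$. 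The engine is the standard fact that a Cohen real produces a null cover of the ground model reals: fixing an interval partition $\langle I_n : n < \omega\rangle$ of $\omega$ with $\sum_n 2^{-|I_n|} < \infty$, if $c$ is Cohen over a model $W$ then $N_c := \{y \in \cantor : \exists^\infty n\; y\restriction I_n = c\restriction I_n\}$ is null (Borel--Cantelli) and contains $W \cap \cantor$, the latter because for each $x \in W$ it is dense to copy $x$ onto some far-out block, forcing $c$ to agree with $x$ on infinitely many blocks; via a fixed $V$-coded Borel isomorphism this transfers to the null ideal on $\baire$.

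To assemble the family I would exploit the factoring $\mathbb C_\kappa \cong \mathbb C_{\{\gamma\}} \times \mathbb C_{\kappa\setminus\{\gamma\}}$, which makes $c_\gamma$ Cohen over $W_\gamma := V[\langle c_\beta : \beta \ne \gamma\rangle]$; set $N_\gamma := N_{c_\gamma}$, a null subset of $\baire$ containing $W_\gamma \cap \baire$. Given any $x \in \baire \cap V[G]$, the countable chain condition yields a countable $A \subseteq \kappa$ with $x \in V[G\restriction A]$, and then $x \in W_\gamma$ for every $\gamma \notin A$, so $\{\gamma : x \notin N_\gamma\} \subseteq A$ is countable. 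Thus $\{N_\gamma : \gamma < \kappa\}$ is the required Rothberger family, Lemma~\ref{coveringfamilylemma} yields $\mfb(R_\Null) > \kappa$, and with the upper bound we conclude $\mfb(R_\Null) = \kappa^+$. I expect the crux to be exactly this last point — arranging that each real is missed by only \emph{countably} many of the $N_\gamma$, not merely by fewer than $\kappa$ many. Covering $V[\langle c_\beta : \beta < \alpha\rangle]$ by a set built from $c_\alpha$ would only control indices above the support of $x$ and give a useless $(\kappa,\kappa)$-family; it is the mutual genericity of the coordinates (each $c_\gamma$ being Cohen over all reals avoiding coordinate $\gamma$) that confines the bad set to the countable support of $x$ and thereby matches the value $\mfb(R)=\aleph_1$.
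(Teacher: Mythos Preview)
Your proposal is correct and follows essentially the same route as the paper: item~(1) via Lemma~\ref{lemmab<cov} using $\mfb(R)=\aleph_1<\kappa={\rm cov}(\Me)={\rm cov}(\Kb)$, and item~(2) by exhibiting a $(\kappa,\aleph_1)$-Rothberger family for $\Null$ from the null sets coded by the Cohen reals, using the ccc to get countable supports and mutual genericity to place each real in all $N_\gamma$ off its support. Your version is slightly more explicit (the block-matching description of $N_c$, and the upper bound via Lemma~\ref{omegabounds} rather than the blanket $2^\mfc=\kappa^+$), but the argument is the same.
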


%Observe that by combining this theorem with the implications from the Cicho\'n diagram and Corollary \ref{lemmamfd} completely determines all 18 values for the cardinals under consideration in the Cohen model.

\begin{proof}
There are two things to show. First that $\mfb (\in^*_\mathcal K) = \mfb (\neq^*_\Me) = \aleph_1$ and second that $\mfb (\in^*_\Null) = \mfb (\neq^*_\Null) = \kappa^+$. The first one is easy. Recall that (iterated) Cohen forcing does not add a slalom eventually capturing all the ground model reals, nor an eventually different function and both ${\rm cov}(\Me)$ and $\mfd$ ($= {\rm cov} (\Kb)$) are $\kappa$ so Lemma \ref{lemmab<cov} applies.

The second argument is slightly more involved. We need to verify that there is a $(\kappa, \aleph_1)$-Rothberger family for the null ideal as described in Lemma \ref{coveringfamilylemma}. Let $G$ be $V$-generic for the product of $\kappa$ many Cohen reals and work in $V[G]$. Let $\{c_\alpha \; | \; \alpha < \kappa\}$ enumerate the Cohen reals. Each such real, say $c_\alpha$, codes a null set $N_\alpha$. It is this family $\{N_\alpha \; | \; \alpha < \kappa\}$ that will witness the lemma. It remains to see that any $\aleph_1$-sized subset is covering. 

By the ccc, for each $x:\omega \to \omega$ there is a countable $X \subseteq \kappa$ so that in fact $x$ is added by the forcing restricted to $X$. Let us denote, for $\dot{x}$ a name for a real $X_{\dot{x}}$ the countable support as described above. Observe that if $\alpha \notin X_{\dot x}$ then in fact $\dot{x}$ is forced to be in $N_\alpha$ by mutual genericity of the Cohen reals. But now, given any $\aleph_1$-sized family $\mathcal Y \subseteq \{N_\alpha \; | \; \alpha < \kappa\}$ and any name for a real $\dot{x}$, it must be the case that there is an $N_\alpha \in \mathcal Y$ so that $\alpha \notin X_{\dot{x}}$ so $x \in N_\alpha$ as needed.
\end{proof}

%Now, let $\{f_\alpha < \alpha < \kappa\}$ be a set of functions $f_\alpha:\baire \to \baire$. We need to show that this set has a $\in^*_\Null$ bound. Indeed, let $g:\baire \to \mathcal S$ be defined by letting, for each $x$, $g(x)$ be any slalom bounding all $f_\alpha (x)$ for $\alpha \in X_{\dot{x}}$. Since this set is countable, $g$ is well defined. Moreover, for each $\alpha < \kappa$ if $\alpha \in X_{\dot{x}}$ then $f_\alpha (x) \in^* g(x)$ and if $\alpha \notin X_{\dot{x}}$ then $x \in N_\alpha$ so $\{x \; | \; f_\alpha (x) \notin^* g(x)\} \subseteq N_\alpha$. Therefore $f_\alpha \in^*_\Null g$ as needed.

\subsection{The Random Model}
The random model is the model obtained by forcing with the random forcing $\mathbb B_\kappa$ for the measure algebra $2^\kappa$ over a model of $\GCH$, see \cite[Model 7.6.8, p. 393]{BarJu95}. We will prove the following theorem. 

\begin{theorem}
In the random model the following hold. 
\begin{enumerate}
\item
For all $R \in \{\in^*, \leq^*, \neq^*\}$ we have $\mfb (R_\Me)= \kappa^+$
\item
$\mfb (\in^*_\Null) = \mfb(\leq^*_\Null) = \aleph_1$
\item
$\mfb (\neq^*_\Null) = \mfb(\neq^*_\Kb) =  \kappa$
\item
$\mfb (\in^*_\Kb) = \mfb(\leq^*_\Kb) = \aleph_2$
\end{enumerate}
\label{random}
\end{theorem}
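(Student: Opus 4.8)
The plan is to read off the classical invariants of the random model and feed them into the \textsc{zfc} machinery of Section 3, rather than re-forcing for each cardinal. Recall that here $\mfc = \kappa$ and $2^\kappa = \kappa^+$, and that the relevant arithmetic is $2^{\aleph_0} = 2^{\aleph_1} = \kappa^{\aleph_1} = \kappa$ (the forcing is ccc of density $\kappa$ over a model of $\GCH$ with $\kappa$ regular $>\aleph_1$). The invariants I would use are $\mfb(\in^*) = {\rm add}(\Null) = \aleph_1$, $\mfb(\leq^*) = \mfb = \aleph_1$ and $\mfb(\neq^*) = {\rm non}(\Me) = \kappa$ (random being $\omega^\omega$-bounding), together with ${\rm cov}(\Null) = \kappa$, ${\rm non}(\Null) = \aleph_1$, ${\rm cov}(\Me) = \aleph_1$ and ${\rm cov}(\Kb) = {\rm non}(\Kb) = \mfd = \mfb = \aleph_1$. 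With these recorded, items (2) and (3) are immediate. For (2), since $\mfb(\in^*) = \mfb(\leq^*) = \aleph_1 < \kappa = {\rm cov}(\Null)$, Lemma \ref{lemmab<cov} gives $\mfb(\in^*_\Null) = \mfb(\leq^*_\Null) = \aleph_1$. For (3), Lemma \ref{omegabounds} sandwiches $\kappa = \mfb(\neq^*) \le \mfb(\neq^*_\mathcal I) \le \mfb(\neq^*)^{{\rm non}(\mathcal I)} = \kappa^{\aleph_1} = \kappa$ for $\mathcal I \in \{\Null, \Kb\}$, because ${\rm non}(\Null) = {\rm non}(\Kb) = \aleph_1$.

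The heart of the theorem is item (1), which I would obtain by building a $(\kappa,\aleph_1)$-Rothberger family for $\Me$ and invoking Lemma \ref{coveringfamilylemma}. Working in $V[G]$, enumerate the random reals $\{r_\alpha \mid \alpha < \kappa\}$, fix in $V$ an interval partition $\langle I_k\rangle$ of $\omega$ with $\sum_k 2^{-|I_k|} < \infty$, and set $M_\alpha = \{z \mid \forall^\infty k \; z\restriction I_k \ne r_\alpha \restriction I_k\}$, a meager set coded by $r_\alpha$. The key point is the standard fact that a random real makes the reals of the ground model meager: by Borel--Cantelli, for any model $W$ over which $r_\alpha$ is random, every $y \in W \cap \baire$ lies in $M_\alpha$. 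By ccc any $x \in V[G]$ has a countable support $X_{\dot x} \subseteq \kappa$, and for $\alpha \notin X_{\dot x}$ the coordinate $r_\alpha$ is random over $V[G\restriction X_{\dot x}] \ni x$, so $x \in M_\alpha$. Hence for any $\mathcal Y \subseteq \{M_\alpha\}$ with $|\mathcal Y| = \aleph_1$ and any $x$, picking $\alpha \in \mathcal Y \setminus X_{\dot x}$ (possible since $|X_{\dot x}| = \aleph_0$) witnesses $x \in \bigcup\mathcal Y$. As $\mfb(\in^*) = \aleph_1$, Lemma \ref{coveringfamilylemma} yields $\mfb(\in^*_\Me) > \kappa$, and the diagram of Figure \ref{figuremeager} propagates this to $\mfb(\leq^*_\Me), \mfb(\neq^*_\Me) \ge \kappa^+$; the matching upper bound is Lemma \ref{omegabounds}, $\mfb(R_\Me) \le \mfb(R)^{{\rm non}(\Me)} \le \kappa^\kappa = 2^\kappa = \kappa^+$. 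I expect this construction to be the main obstacle. The essential asymmetry with the null ideal is that a random real does \emph{not} make the old reals null (they witness ${\rm non}(\Null) = \aleph_1$), so no analogous Rothberger family for $\Null$ exists, which is precisely why (3) lands at $\kappa$ rather than $\kappa^+$.

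Finally, for item (4) I would pin $\mfb(\in^*_\Kb) = \mfb(\leq^*_\Kb) = \aleph_2$ between two separate arguments, since Lemma \ref{omegabounds} is useless on both sides here: its lower bound is only $\aleph_1$, and because ${\rm non}(\Kb) = \aleph_1$ its upper bound is $\aleph_1^{\aleph_1} = 2^{\aleph_1} = \kappa$. For the lower bound, ${\rm cov}(\Kb) = \mfd = \aleph_1$ lets Lemma \ref{lemmacov=1} give $\mfb(\in^*_\Kb) \ge \aleph_2$, and the diagram lifts this to $\mfb(\leq^*_\Kb) \ge \aleph_2$. For the upper bound $\le \aleph_2$ I would appeal to Lemma \ref{lemma4}: the $V$-reals are $\Kb$-positive and both $\leq^*$- and $\in^*$-unbounded in $V[G]$ (the $\in^*$ case because, being non-null, they cannot be localized by a single slalom), the $\sigma$-compact sets of $V[G]$ are $\leq^*$-bounded hence absorbed by $V$-coded ones by $\omega^\omega$-boundedness, and the ground-model functions extended arbitrarily form a $\Kb$-unbounded family of size $\aleph_2$ (the number of functions $V\cap\baire \to \baire$ in $V$). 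The one point needing care is that $\mathbb B_\kappa$ is a measure algebra, not literally a countable-support product of posets; what the proof of Lemma \ref{lemma4} actually uses is the factorization $\mathbb B_\kappa \cong \mathbb B_X \times \mathbb B_{\kappa\setminus X}$ with $|X| = \aleph_1$ absorbing any $f\restriction(V\cap\baire)$, and this the measure algebra does satisfy, so the argument transfers verbatim.
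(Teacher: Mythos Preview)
Your proof is correct and follows essentially the same route as the paper's. Items (2)--(4) match the paper almost verbatim, and for item (1) you flesh out the $(\kappa,\aleph_1)$-Rothberger family argument that the paper only sketches by pointing back to the Cohen case; your observation that Lemma \ref{lemma4} does not literally apply to $\mathbb B_\kappa$ but that its proof transfers via the factorization $\mathbb B_\kappa \cong \mathbb B_X \times \mathbb B_{\kappa\setminus X}$ is exactly what the paper means when it says the upper bound in (4) is ``proved in exactly the same manner as'' that lemma.
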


Observe from this theorem the somewhat surprising constellation given by looking at the cardinals associated with $\in^*$ and $\leq^*$. Namely we have that $\mfb(\in^*_\Null) = \aleph_1 < \mfb(\in^*_\Kb) = \aleph_2 < \mfb (\in^*_\Me) = \kappa^+$, and similarly for $\leq^*$. We do not know whether it is consistent that the three cardinals related to $\neq^*$ are simultaneously distinct.

\begin{proof}
Let $G \subseteq \mathbb B_\kappa$ be generic over $V$ and enumerate the $\kappa$-many random reals as $\{r_\alpha \; | \; \alpha < \kappa\}$. We argue for each point individually, starting with the cardinals for the meager ideal. This argument is almost verbatim the same as the argument in the case of the cardinals associated to the null ideal in the Cohen model. The difference is that each random real $r_\alpha$ codes a meager set $M_\alpha$.

The second item follows from Lemma \ref{lemmab<cov} noting that random forcing makes ${\rm cov}(\Null) = \kappa$ and is $\baire$-bounding.

For the third, note first that $\mfb(\neq^*)$ ($= {\rm non}(\Me)$) is $\kappa$ in the random model since every random real adds an eventually different real. It follows that $\kappa = \mfb(\neq^*) \leq \mfb(\neq^*_\Null) = \mfb(\neq^*_\Kb)$ so we get the lower bound. Conversely, note that ${\rm non}(\Null) = {\rm non}(\Kb) = \mfb = \aleph_1$ in the random model so by Lemma \ref{omegabounds} we get $\mfb(\neq^*_\Null) \leq \mfb(\neq^*)^{{\rm non}(\Null)} = \kappa^{\aleph_1} = \kappa$ and the same for $\mfb(\neq^*_\Kb)$. 

For the fourth, the lower bound comes from the fact that $\mfd = \aleph_1$ so by Lemma \ref{lemmacov=1} we get $\aleph_2 \leq \mfb (\in^*_\Kb) \leq \mfb(\leq^*_\Kb)$. The upper bound is proved in exactly the same manner as Lemma \ref{lemma4}.
\end{proof}

\subsection{The Sacks Model}
The Sacks model refers to either the countable support product or iteration of $\kappa$ many copies of Sacks forcing, $\mathbb S$, over a model of $\GCH$, where in the later case we must insist $\kappa = \aleph_2$, see \cite[Model 7.6.2, p. 388]{BarJu95}. It turns out that the computation of the cardinals is unchanged regardless of whether we take the product or the iteration. 

\begin{theorem}
For all $R$ and $\mathcal I$, in the side by side or iterated Sacks model $\mfb (R_\mathcal I) = \aleph_2$.
\end{theorem}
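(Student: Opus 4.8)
The plan is to prove the two inequalities $\mfb(R_\mathcal I) \leq \aleph_2$ and $\mfb(R_\mathcal I) \geq \aleph_2$ separately, exploiting the fact that in either Sacks model all the relevant ``small'' invariants equal $\aleph_1$. Since Sacks forcing is $\baire$-bounding, has the Sacks property, and---together with its countable support products and iterations---preserves positive outer measure and non-meagerness, we have in the extension $V[G]$ that $\mfb = \mfd = \aleph_1$ (so there is a $\leq^*$-increasing dominating scale of length $\omega_1$), ${\rm cov}(\Null) = {\rm cov}(\Me) = \aleph_1$, ${\rm cof}(\Null) = {\rm cof}(\Me) = \aleph_1$, and $\mfb(R) = \aleph_1$ for every $R \in \{\in^*, \leq^*, \neq^*\}$. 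These facts hold for both the product (of an arbitrary regular $\kappa > \aleph_1$) and the length-$\omega_2$ iteration, and I would cite them from \cite{BarJu95}.

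For the upper bound I would apply Lemma \ref{lemma4} with $\mathbb P$ the Sacks product. Its three hypotheses are precisely the standard preservation properties: $V \cap \baire$ is $\mathcal I$-positive because it is non-null, non-meager, and (by boundedness) $\leq^*$-unbounded; it is $R$-unbounded because any single $g \in V[G]$ eventually different from all old reals would place $V \cap \baire$ inside the meager set $\{y : y \neq^* g\}$, and any single new slalom capturing all old reals would place $V \cap \baire$ inside a null set, both contradicting preservation, while $\leq^*$-unboundedness is immediate from boundedness; and the $V$-coded Borel sets form a basis for $\mathcal I$ by the Sacks property (for $\Null$ and $\Me$) and by boundedness (for $\Kb$, as every $\sigma$-compact set is $\leq^*$-bounded by a ground model function). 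Lemma \ref{lemma4} then produces an $R_\mathcal I$-unbounded family of size $|\bbb \cap V| = \aleph_2$, giving $\mfb(R_\mathcal I) \leq \aleph_2$. For the iterated model Lemma \ref{lemma4} is not literally stated, but its proof adapts verbatim: every real of $V[G]$, and hence the restriction to $V \cap \baire$ of any candidate bound, is added by an initial segment $\mathbb P_\alpha$ with $\alpha < \omega_2$, which has size $\aleph_1$; the diagonalization against conditions and ground-model Borel codes is then carried out inside $V$ exactly as before.

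For the lower bound I would invoke Lemma \ref{coveringfamilylemma} with $\kappa = \aleph_1$; since $\mfb(R) = \aleph_1$, it suffices to produce for each $\mathcal I$ an $(\aleph_1,\aleph_1)$-Rothberger family, that is, a family $\{X_\alpha : \alpha < \omega_1\} \subseteq \mathcal I$ every one of whose uncountable subfamilies covers $\baire$. For $\mathcal I \in \{\Null, \Me\}$ I take a cover $\{M_\alpha : \alpha < \omega_1\}$ of $\baire$ by $\mathcal I$-sets (possible since ${\rm cov}(\mathcal I) = \aleph_1$) and set $X_\alpha = \bigcup_{\beta \leq \alpha} M_\beta$, which is still in $\mathcal I$ and is $\subseteq$-increasing; for $\mathcal I = \Kb$ I take a $\leq^*$-increasing dominating scale $\{g_\alpha : \alpha < \omega_1\}$ and set $X_\alpha = \{x : x \leq^* g_\alpha\} \in \Kb$. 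In both cases every $x \in \baire$ lies in $X_\beta$ for all sufficiently large $\beta$, so every uncountable (hence cofinal) subfamily covers $\baire$. Lemma \ref{coveringfamilylemma} then yields $\mfb(R_\mathcal I) > \aleph_1$, and together with the upper bound we conclude $\mfb(R_\mathcal I) = \aleph_2$.

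The main obstacle lies in the upper bound rather than the lower one. For the product, the crux is the implicit hypothesis of Lemma \ref{lemma4} that a candidate bound restricted to $V \cap \baire$ depends on only $\aleph_1$ coordinates; this rests on the fact that in the countable support Sacks product each real is added by a countable subproduct, so that the union of $\aleph_1$ such supports is captured by a subproduct of size $\aleph_1$. For the iteration one must instead check that reals appear cofinally below $\omega_2$ and that the Sacks property, boundedness, and the measure- and category-preservation properties genuinely survive the full countable support iteration---this is where the delicate preservation theorems are needed.
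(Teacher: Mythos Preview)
Your argument is correct and tracks the paper's proof closely. For the lower bound you invoke Lemma~\ref{coveringfamilylemma} with an explicit $(\aleph_1,\aleph_1)$-Rothberger family; the paper instead cites the special case Lemma~\ref{lemmacov=1} (which it observes is subsumed by Lemma~\ref{coveringfamilylemma}), so this is the same idea packaged differently. For the product upper bound you both use Lemma~\ref{lemma4}. The one genuine difference is the iteration upper bound: you redo the diagonalization of Lemma~\ref{lemma4} using that every real appears at a proper initial stage, whereas the paper simply applies Lemma~\ref{omegabounds}, noting that in the iterated Sacks model $\mfb(R)={\rm non}(\mathcal I)=\aleph_1$ and hence $\mfb(R_\mathcal I)\leq \mfb(R)^{{\rm non}(\mathcal I)}=\aleph_1^{\aleph_1}=\aleph_2$. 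Your route works, but the paper's is shorter and avoids the preservation bookkeeping you flag as the ``main obstacle.''
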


\begin{proof}
This follows in a straightforward way from the Sacks property by combining Lemma \ref{lemmacov=1} and Lemmas \ref{lemma4} and \ref{omegabounds}. In particular, the Sacks property ensures that all covering numbers are small so $\aleph_2$ is a lower bound and the hypotheses of Lemma \ref{lemma4} (for the side by side)/the cardinals in Lemma \ref{omegabounds} (for the iteration) are also met thanks to the Sacks property so $\aleph_2$ is the upper bound as well.
\end{proof}

This is somewhat surprising since this means that some higher dimensional cardinals are bigger in the Sacks model than in the Cohen and Random models. 

\subsection{The Hechler Model}
By the Hechler model we mean the finite support iteration of Hechler forcing, $\D$, over a model $\GCH$, see \cite[Model 7.6.9, p. 394]{BarJu95}. 

\begin{theorem}
In the Hechler model the following hold.
\begin{enumerate}
\item
For $R$ equal to $\leq^*$ or $\neq^*$, we have that $\mfb(R_\mathcal I) = \kappa^+$ for all $\mathcal I \in \{\Null, \Me, \Kb\}$. 
\item
$\mfb (\in^*_\Kb) = \mfb (\in^*_\Me) = \aleph_1$
\item
$\mfb (\in^*_\Null) = \kappa^+$
\end{enumerate}
\label{Hechler}
\end{theorem}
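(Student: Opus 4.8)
The theorem is driven entirely by the location of the Hechler model in the Cicho\'n diagram, so I would open by recording those values. In the Hechler model (see \cite[Model 7.6.9, p.~394]{BarJu95}) one has ${\rm add}(\Null) = {\rm cov}(\Null) = \aleph_1$, whereas ${\rm add}(\Me) = \mfb = \mfd = {\rm cov}(\Me) = {\rm non}(\Me) = {\rm non}(\Null) = \mfc = \kappa$; since $\Kb$ is generated by the $\leq^*$-bounded sets, also ${\rm non}(\Kb) = \mfb = \kappa$ and ${\rm cov}(\Kb) = \mfd = \kappa$. By the theorems of Miller and Bartoszy\'nski recalled in the introduction this translates to $\mfb(\leq^*) = \mfb(\neq^*) = \kappa = \mfc$ and $\mfb(\in^*) = {\rm add}(\Null) = \aleph_1$. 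Finally, the iteration has size $\kappa$, is $\kappa^+$-cc, and is performed over a model of $\GCH$, so $2^\kappa = \kappa^+$ and hence $\kappa^\kappa = \aleph_1^\kappa = \kappa^+$.

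Granting this, parts (1) and (2) fall out of the $\ZFC$ results of Section~3. For (1), if $R \in \{\leq^*, \neq^*\}$ and $\mathcal I$ is any of the three ideals then $\mfb(R) = {\rm non}(\mathcal I) = \mfc$, so Lemma~\ref{lemma5} gives $\mfb(R_\mathcal I) \geq \mfc^+ = \kappa^+$, while Lemma~\ref{omegabounds} gives $\mfb(R_\mathcal I) \leq \mfb(R)^{{\rm non}(\mathcal I)} = \kappa^\kappa = \kappa^+$; hence $\mfb(R_\mathcal I) = \kappa^+$. For (2), $\mfb(\in^*) = \aleph_1 < \kappa = {\rm cov}(\Me) = {\rm cov}(\Kb)$, so Lemma~\ref{lemmab<cov} yields $\mfb(\in^*_\Me) = \mfb(\in^*_\Kb) = \mfb(\in^*) = \aleph_1$. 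In (3) the upper bound is likewise immediate from Lemma~\ref{omegabounds}, since $\mfb(\in^*_\Null) \leq \mfb(\in^*)^{{\rm non}(\Null)} = \aleph_1^\kappa = \kappa^+$. Everything therefore reduces to the single inequality $\mfb(\in^*_\Null) > \kappa$.

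This lower bound is the one genuinely substantial point, and, in keeping with the remark after Lemma~\ref{coveringfamilylemma}, I would aim to establish it by exhibiting a $(\kappa, \aleph_1)$-Rothberger family for $\Null$, that is, null sets $\{N_\beta : \beta < \kappa\}$ such that every real lies in all but countably many of them; Lemma~\ref{coveringfamilylemma} with parameter $\mfb(\in^*) = \aleph_1$ then delivers $\mfb(\in^*_\Null) > \kappa$, which with the upper bound completes (3). The obvious candidate, copying the null-ideal argument in the Cohen model (Theorem~\ref{cohenmodel}), is to take $N_\beta$ to be the null set coded by the Cohen real added by the iteration at stage $\beta$, covering $\baire \cap V_\beta$. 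The difficulty---and the reason the Hechler case is not a transcription of the Cohen (or random) case---is that a finite support \emph{iteration} has no ``delete one coordinate'' mutual genericity: for a real $y$ with countable support $S$ and $\gamma = \sup S < \kappa$ one only gets $y \in N_\beta$ for $\beta > \gamma$, so the exceptions are confined to $[0,\gamma]$ but may number $|\gamma|$. These sets thus form merely a $(\kappa, \kappa)$-Rothberger family. That is exactly enough to reprove the $\Null$-cases of part~(1), whose parameter is $\mfb(\leq^*) = \mfb(\neq^*) = \kappa$, but it is too weak for $\in^*$, where the parameter $\aleph_1$ demands point-\emph{countable} exceptions.

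The main obstacle, then, is to produce $\kappa$ null sets in the Hechler extension with point-countable complements, and I expect this to be where the real work lies. One cannot reduce it to cardinality bookkeeping: writing $E_\alpha = \{\beta : \alpha \notin S_\beta\}$, no family of index sets $S_\beta \subseteq \kappa$ of size ${<}\kappa$ can make every $E_\alpha$ countable, since then each $\beta$ would lie in $E_\alpha$ for all but ${<}\kappa$ many $\alpha$, and fixing $\aleph_1$ indices $\beta_\xi$ would force $E_\alpha \supseteq \{\beta_\xi : \xi < \omega_1\}$ for $\kappa$ many $\alpha$, contradicting $|E_\alpha| \leq \aleph_0$. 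Hence the $N_\beta$ must be honest measure-zero sets of size continuum. I would construct them by recursion along the iteration, fixing an increasing null cover $\langle A_\xi : \xi < \omega_1\rangle$ witnessing ${\rm cov}(\Null) = \aleph_1$ and, under a $\kappa$-bookkeeping, committing at each stage to a null set engineered so that every real, including the cofinally late Hechler reals, is caught by all but countably many members of the final family; the tools are that Hechler forcing is $\sigma$-centered (so every new real lands in some ground-model-coded null set) and that ${\rm cov}(\Null) = \aleph_1$ is preserved at each intermediate stage, with the existence of families of this type in such models being the theme of \cite{Cichon89}. The delicate step, and the one I expect to consume most of the proof, is to guarantee simultaneously that each committed set stays null and that the late reals are nevertheless captured co-countably often.
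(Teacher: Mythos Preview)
Your treatment of parts (1), (2), and the upper bound in (3) matches the paper's argument and is correct. You have also correctly isolated the one substantial point: exhibiting a $(\kappa,\aleph_1)$-Rothberger family for $\Null$, with the Cohen reals added at each stage as the natural candidates; and you have correctly diagnosed why the product argument from the Cohen model does not transcribe, namely that in an iteration the naive support of a name $\dot x$ may be an initial segment of size $|\gamma|$ rather than a countable set.

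The gap is in how you propose to overcome this. Your sketch---a recursive construction along the iteration, committing to null sets via bookkeeping against an $\aleph_1$-cover---is not the paper's method, and as written it is too vague to constitute a proof: you do not say what the committed null sets are, nor how a set fixed at stage $\beta$ could be guaranteed to catch reals that only appear cofinally late. The paper does something sharper and more structural. Using $\sigma$-centeredness it proves (its Fact~4.5) that for any $\P_\beta$-name $\dot x$ and any $\alpha\le\beta$ there are countably many $\P_\alpha$-names $\{\dot x_n\}$ such that any $\P_\alpha$-real which is infinitely-often-equal on a fixed interval partition to every $\dot x_n$ is forced to be infinitely-often-equal to $\dot x$. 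Iterating this yields a \emph{countable} ``hereditary support'' ${\rm supp}(\dot x)\subseteq\kappa$ with the property that if $\alpha\notin{\rm supp}(\dot x)$ then the Cohen real $c_\alpha$ at stage $\alpha$ satisfies $\forces_\kappa \dot x\in N_{c_\alpha}$. Since this hereditary support is countable for every name, the family $\{N_{c_\alpha}:\alpha<\kappa\}$ is $(\kappa,\aleph_1)$-Rothberger, and Lemma~\ref{coveringfamilylemma} finishes.

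So the missing idea is not a bookkeeping recursion but a refined notion of support, available because the iterands are $\sigma$-centered, that replaces the uncountable initial-segment support by a countable one while retaining the ``mutual genericity'' conclusion $\dot x\in N_{c_\alpha}$ for $\alpha$ outside it.
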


\begin{proof}
Let $G$ be generic over $V$ for the iteration $\D_\kappa$. Denote the $\alpha^{\rm th}$ stage of the iteration by $\D_\alpha$ and let $\{d_\alpha \; | \; \alpha < \kappa\}$ be the $\kappa$ many Hechler reals added by $G$. Since each Hechler real is dominating, and also adds a Cohen real, it is well known that in the Hechler model $\mfb = {\rm cov}(\Me) = \kappa$. As a result, Lemma \ref{lemma5} ensures the first part. For the second part, it suffices to observe that in the Hechler model ${\rm cov} (\Me) = \kappa$, but no slalom is added eventually capturing all ground model reals so Lemma \ref{lemmab<cov} applies.

The last item requires more argument and involves verifying the existence of a $(\kappa, \aleph_1)$-Rothberger family as in Lemma \ref{coveringfamilylemma}. Each Hechler real adds a Cohen real, and the null sets coded by these Cohen reals will be our family. Towards proving this fact, fix a partition $\langle I_k\; | \; k \in \omega\rangle$  of $\omega$ into finite intervals with $|I_k| = k+1$. We start with some facts about finite support iterations of $\sigma$-centered forcing notions $\langle \P_\alpha, \dot{\Q}_\alpha\; | \; \alpha < \kappa \rangle$. The first is well-known.

\begin{fact}
Let $\langle \P_\alpha, \dot{\Q}_\alpha\; | \; \alpha < \kappa \rangle$ be a finite support iteration of $\sigma$-centered forcing notions. Let $\alpha \leq \beta \leq \kappa$ and let $\dot{x}$ be a $\P_\beta$-name for an element of $2^\omega$. Then there are $\P_\alpha$-names $\{\dot{x}_n \in 2^\omega \; | \; n \in \omega\}$ so that for all $\P_\alpha$-names $\dot{y}$ forced to be in $2^\omega$ if $\forces_\alpha \forall n \exists^\infty k ( \dot{x}_n \hook I_k = \dot{y} \hook I_k)$ then $\forces_\beta \exists^\infty k (\dot{x} \hook I_k = \dot{y} \hook I_k)$.
\label{x_nfact}
\end{fact}

\begin{proof}[Proof of \ref{x_nfact}]
By working in $V^{\P_\alpha}$ we may assume without loss of generality that $\alpha = 0$. The proof is by induction on $\beta$. The case where $\beta = 0$ is trivial hence we may suppose that $\beta > 0$. First assume that $\beta = \gamma + 1$ for some $\gamma$ and work in $V^{\P_\gamma}$. Let $\mathbb Q_\gamma = \bigcup_{n < \omega} \Q_n$ be a partition of $\Q_\gamma$ into centered pieces. Fix $n, k \in \omega$ and define $x_n \hook I_k$ to be any value so that no $p \in \Q_n$ forces $\dot{x} \hook I_k \neq \check{x}_n \hook I_k$. To see that such a value exists note that since $I_k$ is finite and $\Q_n$ is centered, if for each $s \in 2^{I_k}$ there was a condition in $\Q_n$ forcing $\dot x \hook I_k \neq s$, then a common extension of these conditions would force a contradictory statement.
If $y \in 2^\omega \cap V^{\P_\gamma}$ is equal to $x_n \hook I_k$ for infinitely many $k$ and all $n$ then $\forces_{\gamma + 1} \exists^\infty k \, \check{y} \hook I_k =\dot{x} \hook I_k$. This is because otherwise there would be a natural numbers $n, l \in \omega$ and a condition $p \in \Q_n$ so that $p \forces \forall k > l \, \check{y} \hook I_k \neq \dot{x} \hook I_k$ but by the way we constructed the $x_n$'s, we can find a $q \leq p$ and an $j> l$ so that $y \hook I_j = x_n \hook I_j$ and $q \forces \dot{x} \hook I_j = \check{x}_n \hook I_j$. Now, let $\dot{x}_n$ name $x_n$ in $V$ and apply the inductive hypothesis to each $\dot{x}_n$ to get elements of $2^\omega$, say $\{x_{n, m} \; | \; n, m \in \omega\}$ so that for each $n <\omega$ and $y$ we have that if $\forall m \exists^\infty k \, (y \hook I_k =x_{n, m} \hook I_k)$ then $\forces_\gamma \exists^\infty k (\check{y} \hook I_k = \dot{x}_n \hook I_k)$. Then using the countable set $\{x_{n, m} \; | \; n, m \in \omega\}$ for the $\dot{x}$ then witnesses the fact in this case.

Now suppose that $\beta$ is a limit ordinal. Since $\dot{x}$ names a real, we can assume without loss of generality that $\beta$ has countable cofinality. Let $\langle \gamma_n \; | \; n\in \omega\rangle$ be a strictly increasing sequence of ordinals with limit $\beta$. For each $n < \omega$ find a decreasing set of conditions in $V^{\P_{\gamma_n}}$ deciding all of $\dot{x}$ and let $x_n \in V^{\P_{\gamma_n}}$ be the real interpreting $\dot{x}$ based on these decisions. Let $\dot{x}_n$ be a name for $x_n$ in $V$ and, applying the inductive hypothesis to each $x_n$ let $\{x_{n, m} \; | \; m \in \omega\}$ be a set of reals so that for each $n <\omega$ and $y$ we have that if $\forall m \exists^\infty k \, (y \hook I_k =x_{n, m} \hook I_k)$ then $\forces_{\gamma_n} \exists^\infty k \, (\check{y} \hook I_k = \dot{x}_n \hook I_k)$. We claim that these $\{x_{n, m} \; | \;n,  m \in \omega\}$  work for $\dot{x}$. To see this, suppose that for all $n$ and $m$ there are infinitely many $k$ so that $x_{n, m} \hook I_k = y \hook I_k$ but there is an $l \in \omega$ and a $p \in \P_\beta$ so that $p \forces_\beta \forall k > l \;( \dot{x} \hook I_k \neq \check{y} \hook I_k)$. By the finiteness of the support, $p$ is actually a $\P_{\gamma_n}$ condition for some $n < \omega$ and, by assumption $p \forces_{\gamma_n} \exists^\infty k \, (\dot{x}_n \hook I_k = \check{y} \hook I_k)$. However, by the construction of the $x_n$'s we can find an $r \leq_{\beta} p$ and a $k > l$ so that $r \forces_{\beta} \dot{x} \hook I_k = \dot{x}_n \hook I_k = \check{y} \hook I_k$ which is a contradiction.
\end{proof}

Now for some $\beta\leq \kappa$ let $\dot{x}$ be a $\D_\beta$-name for an element of $2^\omega$ and define inductively on $\beta \leq \kappa$ the {\em hereditary support} of $\dot{x}$, denoted ${\rm supp}(\dot{x})$, as follows. If $\beta = 0$ then ${\rm supp}(\dot{x}) = \emptyset$. If $\beta = \gamma + 1$, let $(\dot{x}_n\; | \; n \in \omega)$ be the $\D_\gamma$-names constructed from $\dot{x}$ as in Fact \ref{x_nfact} and let $\{p_{m, k}\; | \; k \in \omega\}$ be a maximal antichain deciding $\dot{x}(\check{m})$. Set 

\begin{equation*}
{\rm supp}(\dot{x}) = \bigcup_{n < \omega} {\rm supp}(\dot{x}_n) \cup \bigcup_{m, k}{\rm supp} (p_{m, k})
\end{equation*}
If $\beta$ is a limit of countable cofinality, let $(\gamma_n \; | \; n< \omega)$ be a strictly increasing sequence of ordinals whose limit is $\beta$, for each $n < \omega$ let $\{\dot{x}_{n, m} \; | \; m \in \omega \}$ be the $\D_{\gamma_n}$-names constructed from $\dot{x}$ as in Fact \ref{x_nfact} and let

\begin{equation*}
{\rm supp}(\dot{x}) = \bigcup_{n, m < \omega} {\rm supp}(\dot{x}_{n, m}) \cup \bigcup_{m, k}{\rm supp}(p_{m, k})
\end{equation*}
where $p_{m, k}$ are as before. Finally for $\beta$ a limit ordinal of uncountable cofinality observe that, by the finite support, there is a $\gamma < \beta$ so that $\dot{x}$ is in fact equivalent to a $\D_\gamma$-name. Let  ${\rm supp}(\dot{x})$ be the support of this $\D_\gamma$-name. Note that in all cases ${\rm supp}(\dot{x})$ is a countable set of ordinals. We need another fact.
\begin{fact}
Let $\alpha < \alpha ' \leq \beta \leq \kappa$ and let $\dot{x}$ be a  $\D_\beta$-name for an element of $2^\omega$. Assume ${\rm supp}(\dot{x}) \cap [\alpha, \alpha ') = \emptyset$. Then there are $\D_\alpha$-names for elements of $2^\omega$, $\{\dot{x}_n\; | \; n \in \omega \}$, so that for all $\D_{\alpha '}$-names $\dot{y}$ for elements of $2^\omega$, if $\forces_{\alpha '} \forall n \exists^\infty k \, (\dot{x}_n \hook I_k = \dot{y} \hook I_k)$, then $\forces_\beta \exists^\infty k \, (\dot{x} \hook I_k = \dot{y} \hook I_k)$.
\label{x_nfact2}
\end{fact}

\begin{proof}[Proof of \ref{x_nfact2}]
The proof is by induction on $\beta$.  If $\beta = \alpha '$ or $\beta = 0$ then the fact is trivially true so assume $0 \leq\alpha < \alpha ' < \beta$. There are two cases corresponding to whether $\beta$ is a successor or a limit. 

First suppose that $\beta = \gamma + 1$ for some $\gamma$. Let $\{\dot{x}_n\; | \; n \in \omega \}$ be $\D_\gamma$-names as constructed in the proof of Fact \ref{x_nfact}. By the definition of the support plus the assumption that ${\rm supp}(\dot{x}) \cap [\alpha, \alpha ') = \emptyset$ we get that ${\rm supp}(\dot{x}_n) \cap [\alpha, \alpha ') = \emptyset$ for all $n < \omega$. Applying the inductive hypothesis we get for each $n < \omega$ a countable set of $\D_\alpha$-names $\{\dot{x}_{n, m} \; | \; m \in \omega\}$ so that for all $\D_{\alpha '}$-names $\dot{y}$ for elements of $2^\omega$, if $\forces_{\alpha '} \forall m \exists^\infty k \, (\dot{x}_{n, m} \hook I_k = \dot{y} \hook I_k)$, then $\forces_\beta \exists^\infty k \,( \dot{x}_n \hook I_k = \dot{y} \hook I_k)$. Using these the same way as in the proof of Fact \ref{x_nfact} completes the proof of this case.

Suppose now that $\beta$ is a limit ordinal. Since $\dot{x}$ is a $\D_\beta$-name for a real, we can reduce to the case where $\beta$ has countable cofinality. Let $\langle \gamma_n \; | \; n < \omega\rangle$ be a strictly increasing sequence of ordinals whose limit is $\beta$. For each $n < \omega$ let $\{\dot{x}_{n, m} \; | \; m \in \omega \}$ be the $\D_{\gamma_n}$-names constructed from $\dot{x}$ as in Fact \ref{x_nfact}. Using these countably many names the rest of the proof of this case is the same as in the successor case.
\end{proof}

Note that if $y \in 2^\omega$ then the set $N_y = \{x \in 2^\omega \; | \; \exists^\infty k \, (x\hook I_k = y \hook I_k) \}$ is a null set. Let $c_\alpha$ be the Cohen real added by the $\alpha^{\rm th}$-Hechler real. 
\begin{claim}
If $\dot{x}$ is a $\P_\kappa$-name for an element of $2^\omega$ and $\alpha \notin {\rm supp}(\dot{x})$ then $\forces_\kappa \dot{x} \in N_{\dot{c}_\alpha}$. 
\end{claim}

\begin{proof}[Proof of Claim]
Fix $\alpha < \kappa$. It's well known that for all $\D_\alpha$-names $\dot{z}$ for elements of $2^\omega$ we have $\forces_{\alpha + 1} \exists^\infty k \, (\dot{z} \hook I_k = \dot{c}_\alpha \hook I_k)$. Now the claim follows by applying Fact \ref{x_nfact2} to $\alpha ' = \alpha + 1$ and $\beta = \kappa$.
\end{proof}

The proof is now essentially the same as that for Cohen forcing. Since ${\rm supp}(\dot{x})$ is countable any $\aleph_1$-sized subfamily of $\{N_{c_\alpha} \; | \; \alpha < \kappa\}$ must be covering.
\end{proof}

%We need to show that $\kappa < \mfb(\in^*_\Null)$. Suppose that $\{f_\alpha \; | \; \alpha < \kappa\}$ is a family of functions $f_\alpha:\baire \to \baire$. Define for each name $\dot{x}$ for a real $g(\dot{x}_G)$ to be a slalom eventually capturing all $\{f_\alpha(\dot{x}_G) \; | \; \alpha \in {\rm supp}(\dot{x})\}$. Since this set is countable this is possible. Now fix $\alpha < \kappa$ and suppose for some $\dot{x}$ we have $f_\alpha(\dot{x}_G) \notin^* g(\dot{x}_G)$. It follows that $\alpha \notin {\rm supp}( \dot{x})$ and hence $\dot{x}_G \in N_{c_\alpha}$ so the set on which $g$ does not eventually capture $f_\alpha$ is null as needed.

Let us note for later that the computation that $\mfb(\in^*_\Null) = \kappa^+$ relied solely on the facts that Hechler forcing is $\sigma$-centered and adds Cohen reals.

\subsection{The Dual Random Model}
For any uncountable $\lambda \leq \kappa$, the $\lambda$-dual random model is formed by first forcing ${\rm add}(\Null) = \mfc = \kappa$ by a ccc forcing and then adding $\lambda$-many random reals. The result, regardless of which $\lambda$ is chosen, the $\lambda$-dual random model is a model where the values of the Cicho\'{n} diagram are determined by ${\rm non}(\Null) = \aleph_1$ and ${\rm cov}(\Null) = \mfb = \kappa$.
\begin{theorem}
In the $\lambda$-dual random model, the following equalities hold.
\begin{enumerate}
\item
$\mfb(\in^*_\Null) = \mfb(\in^*_\Kb) = \aleph_1$
\item
$\mfb(\leq^*_\Null) = \mfb(\neq^*_\Null) = \kappa$
\item
$\mfb(\leq^*_\Kb) = \mfb(\neq^*_\Kb) = \mfb(\leq^*_\Me) = \mfb(\neq^*_\Me) = \kappa^+$
\item
$\mfb(\in^*_\Me) = \lambda^+$
\end{enumerate}
\end{theorem}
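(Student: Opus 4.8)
The plan is to read off the Cichoń invariants of the $\lambda$-dual random model and feed them into the $\ZFC$ lemmas of Section~3, disposing of the three ``easy'' clauses uniformly and isolating clause~(4) as the real content. From the stated values ${\rm non}(\Null)=\aleph_1$ and ${\rm cov}(\Null)=\mfb=\mfc=\kappa$ (together with $2^\kappa=\kappa^+$, which holds by the $\GCH$ ground model and the choice of $\kappa$) the whole diagram is determined: $\mfb=\mfd=\kappa$, so ${\rm non}(\Me)={\rm non}(\Kb)=\mfb(\neq^*)=\mfb(\leq^*)=\kappa$ and ${\rm cov}(\Kb)=\mfd=\kappa$; dually ${\rm cov}(\Me)\le{\rm non}(\Null)=\aleph_1$ forces ${\rm cov}(\Me)={\rm add}(\Me)=\aleph_1$, and hence $\mfb(\in^*)={\rm add}(\Null)=\aleph_1$. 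I will quote these without further comment.

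Clauses (1)--(3) are then immediate. For (1), since $\mfb(\in^*)=\aleph_1<\kappa={\rm cov}(\Null)={\rm cov}(\Kb)$, Lemma~\ref{lemmab<cov} gives $\mfb(\in^*_\Null)=\mfb(\in^*_\Kb)=\mfb(\in^*)=\aleph_1$. For (2), Lemma~\ref{omegabounds} squeezes $\kappa=\mfb(R)\le\mfb(R_\Null)\le\mfb(R)^{{\rm non}(\Null)}=\kappa^{\aleph_1}=\kappa$ for $R\in\{\leq^*,\neq^*\}$, using the standard fact that $\kappa^{\aleph_1}=\kappa$ (from $\kappa$ regular $>\aleph_1$ over a $\GCH$ ground model). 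For (3), the four cardinals $\mfb(\leq^*_\Kb),\mfb(\neq^*_\Kb),\mfb(\leq^*_\Me),\mfb(\neq^*_\Me)$ all satisfy the hypothesis $\mfb(R)={\rm non}(\mathcal I)=\mfc$ of Lemma~\ref{lemma5} (here ${\rm non}(\Kb)={\rm non}(\Me)=\kappa$), so each is $\ge\kappa^+$; the reverse inequality is the trivial bound $\mfb(R_\mathcal I)\le|\bbb|=\kappa^\kappa=2^\kappa=\kappa^+$.

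This leaves clause~(4), $\mfb(\in^*_\Me)=\lambda^+$, which is where $\lambda$ enters and is the heart of the theorem. For the lower bound I would reproduce the Rothberger-family argument used for the Cohen model (Theorem~\ref{cohenmodel}) and for the meager part of the random model (Theorem~\ref{random}): enumerate the $\lambda$ random reals $\{r_\alpha:\alpha<\lambda\}$, let $M_\alpha$ be the meager set coded by $r_\alpha$ (a random real makes the reals of the model over which it is generic meager), and check that $\{M_\alpha:\alpha<\lambda\}$ is a $(\lambda,\aleph_1)$-Rothberger family for $\Me$. Indeed, by the ccc any real $x$ of the final model already lies in an intermediate extension by countably many coordinates, so by mutual genericity $x\in M_\alpha$ for all but countably many $\alpha$; hence every subfamily of size $\aleph_1=\mfb(\in^*)$ is covering. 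Lemma~\ref{coveringfamilylemma} (with the $\kappa$ there taken to be $\lambda$ and $R=\in^*$) then yields $\mfb(\in^*_\Me)>\lambda$.

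The main obstacle is the matching upper bound $\mfb(\in^*_\Me)\le\lambda^+$: Lemma~\ref{omegabounds} only delivers $\mfb(\in^*)^{{\rm non}(\Me)}=\aleph_1^{\kappa}=\kappa^+$, which is too large when $\lambda<\kappa$, so a construction specific to the model is required. I would build an $\in^*_\Me$-unbounded family of size $2^\lambda=\lambda^+$ indexed by functions $\lambda\to\omega_1$, whose values are drawn from a fixed $\in^*$-unbounded set of reals $\{b_j:j<\omega_1\}$ of size $\mfb(\in^*)=\aleph_1$, and whose value at $x$ is governed by the countably many random coordinates that $x$ ``matches''. The delicate point---and the reason the count is $\lambda^+$ rather than the trivial $\kappa^+$---is to show that the reaction of this family to an arbitrary potential bound $h:\baire\to\mathcal{S}$ factors through the $\lambda$-much data carried by the random reals: one must argue, via a Borel-reinterpretation and genericity argument in the spirit of Lemma~\ref{lemma4}, that modulo a meager set the slalom assignment $x\mapsto h(x)$ is controlled by countably many of the $r_\alpha$, so that for some member $f$ of the family the defeat set $\{x:\neg(f(x)\in^* h(x))\}$ is $\Me$-positive. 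Pinning down this factorization, and in particular verifying that the defeat set is genuinely non-meager rather than merely nonempty, is the step I expect to require the most care.
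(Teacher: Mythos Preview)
Your treatment of clauses (1)--(3) and of the lower bound in (4) is correct and matches the paper's argument exactly.

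The upper bound in (4) is where your proposal breaks down. Your plan is to index the unbounded family by functions $\sigma:\lambda\to\omega_1$ and claim this gives a family of size $2^\lambda=\lambda^+$. But in the $\lambda$-dual random model with $\lambda<\kappa$ this cardinal arithmetic is false: already after the first step $2^{\aleph_0}=\kappa$, so $2^\lambda\geq\kappa>\lambda^+$, and your index set has size $\omega_1^\lambda\geq 2^\lambda\geq\kappa$, not $\lambda^+$. More broadly, the heuristic that ``$h$ is controlled by countably many of the $r_\alpha$'' is not available here: $h$ is an arbitrary function on $\baire$ in the final model, and there is no reason any piece of it should live in a small random subextension. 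What \emph{is} true, and what the paper exploits, is that the single Borel meager code witnessing $f_\gamma\in^*_\Me h$ for a fixed $\gamma$ lives in a countable $X_\gamma$-subextension of the $\lambda$-random part.

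The paper's construction is quite different from yours and works entirely in the final model. One enumerates $\baire=\{x_\alpha:\alpha<\kappa\}$ and, using $(\lambda^+)^{\aleph_1}\leq\kappa$, enumerates all pairs $(A_\alpha,M_\alpha)$ with $A_\alpha\in[\lambda^+]^{\aleph_1}$ and $M_\alpha$ a Borel meager set. Fixing a witness $\{y_\xi:\xi<\omega_1\}$ for $\mfb(\in^*)=\aleph_1$, one diagonally builds $\{f_\gamma:\gamma<\lambda^+\}$ together with an increasing sequence $\{\zeta_\alpha\}$ so that $x_{\zeta_\alpha}\notin M_\alpha$ and $\{f_\gamma(x_{\zeta_\alpha}):\gamma\in A_\alpha\}=\{y_\xi:\xi<\omega_1\}$. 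If some $\varphi$ bounded all the $f_\gamma$, then for each $\gamma$ the witnessing meager set $B_\gamma$ is coded in a countable $X_\gamma\subseteq\lambda$; since $\cof([\lambda]^{\leq\aleph_0})=\lambda$, one finds a single countable $X$ and a set $Y\in[\lambda^+]^{\lambda^+}$ with $X_\gamma\subseteq X$ for all $\gamma\in Y$. Choosing $A\in[Y]^{\aleph_1}$ inside the $X$-extension and using ${\rm add}(\Me)=\kappa$ there, one wraps $\bigcup_{\gamma\in A}B_\gamma$ into a single Borel meager $M$; the pair $(A,M)$ was anticipated at some stage $\alpha$, and the choice of $x_{\zeta_\alpha}$ yields the contradiction. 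The key structural inputs you were missing are $\cof([\lambda]^{\leq\aleph_0})=\lambda$ and the fact that ${\rm add}(\Me)=\kappa$ already holds in every countable-coordinate random subextension.
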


Let us note one interesting feature of this model. The value $\mfb(\in^*_\Me)$ depends on which $\lambda$ we chose, even though the cardinals in the Cicho\'{n} diagram are the same regardless of $\lambda$. It follows that the $\mfb(R_\mathcal I)$ numbers are not uniquely determined by the values of the cardinal characteristics on $\omega$ alongside cardinal arithmetic. 

\begin{proof}
For item 1 we can apply Lemma \ref{lemmab<cov} since $\mfb(\in^*) = \aleph_1$ but the covering numbers for both $\Kb$ ($=\mfd$) and $\Null$ are of size continuum.

For item 2 note we apply both inequalities in Lemma \ref{omegabounds}. On the one hand $\kappa$ is a lower bound since $\mfb= \mfb(\neq^*) = \kappa$. On the other hand, $\kappa$ is also an upper bound since $\mfb^{{\rm non}(\Null)} = \kappa^{\aleph_1} = \kappa$.

Item 3 follows from Lemma \ref{lemma5} since $\mfb = {\rm non}(\Me) = \kappa = 2^{\aleph_0}$. 

Finally we tackle $\mfb(\in^*_\Me)$. By the same argument used in Theorem \ref{random}, $\mfb(\in^*_\Me) > \lambda$ as the $\lambda$ random reals code $\lambda$ many meager sets forming a $(\lambda, \aleph_1)$-Rothberger family. Also, for $\lambda = \kappa$, $\mfb  (\in^*_\M) \leq 2^\mfc = \mfc^+ = \kappa^+ = \lambda^+$.

So assume $\lambda < \kappa$. We will show that $\bb (\in^*_\M) \leq \lambda^+$. We mostly work in the final extension but once
step back into an intermediate random extension. Let $\{ x_\alpha \; | \; \alpha < \kappa \}$ list all reals. Let $\{ (A_\alpha , M_\alpha ) \; | \;
\alpha < \kappa \}$ list all pairs $(A,M)$ such that  $A \in [\lambda^+]^{\aleph_1}$ and $M$ is a Borel meager set. Note that by our assumption
$(\lambda^+)^{\aleph_1} \leq \kappa^{\aleph_1} = \kappa$, so this is possible. Also let $\{ y_\xi \; | \;  \xi < \omega_1 \} \subseteq \omom$
be a sequence witnessing $\bb (\in^*) = \aleph_1$.

Now construct functions $\{ f_\gamma \; | \; \gamma < \lambda^+ \} \subseteq (\omom)^{\omom}$ and a strictly increasing sequence
$\{ \zeta_\alpha \; | \;  \alpha < \kappa \}$ of ordinals in $\kappa$ as follows. Suppose we are at step $\alpha$. Find $\zeta_\alpha$ such that
\begin{itemize}
\item $\zeta_\alpha > \zeta_\beta$ for $\beta < \alpha$,
\item $x_{\zeta_\alpha} \notin M_\alpha$.
\end{itemize}
Then define $f_\gamma (x_{\zeta_\alpha}) $ for $\gamma \in A_\alpha$ such that
\[ \{ f_\gamma (x_{\zeta_\alpha}) \; | \;  \gamma \in A_\alpha \} = \{ y_\xi \; | \;  \xi < \omega_1 \}. \]
Define the remaining values $f_\gamma (x_\beta)$ arbitrarily. This completes the construction of the $f_\gamma$.

We claim $\{ f_\gamma \; | \; \gamma < \lambda^+ \}$ is a witness for $\bb (\in^*_\M)$.

For assume this were not the case. Then we could find $\varphi: \omom \to \mathcal S$  such that
\[ \{ x_\alpha \; | \; \alpha < \kappa \, \&\, f_\gamma (x_\alpha) \notin^* \varphi (x_\alpha) \} \in \M \]
for all $\gamma < \lambda^+$. For each such $\gamma$ let $B_\gamma$ be a Borel meager set such that 
\[ \forall x \notin B_\gamma \; (f_\gamma (x) \in \varphi (x) ). \]
There is a countable set $X_\gamma \subseteq \lambda$ such that the code of $B_\gamma$ lies in the $X_\gamma$-extension
(that is, the extension obtained by only adding the random reals with index in $X_\gamma$).
Note that $\cof ( [ \lambda]^{\leq \aleph_0} ) = \lambda$.\footnote{This is true in the ground model and not
changed by ccc forcing. In fact, it is true in ZFC for the $\aleph_n$, and forcing its failure for larger $\lambda$
needs large cardinals.}   Hence we can find a countable $X \subseteq \lambda$ and a $Y \subseteq \lambda^+$ of size $\lambda^+$ s.t.
$X_\gamma \subseteq X$ for all $\gamma \in Y$; in particular, $B_\gamma$ is coded in the $X$-extension for all $\gamma \in Y$.
We may assume $Y$ also belongs to the $X$-extension.
Let $A \subseteq Y$ be any subset of size $\aleph_1$ in the $X$-extension. Since ${\rm add} (\Null) = {\rm add} (\Me) = \kappa = \cc \geq \aleph_2$ in the
$X$-extension, the union $\bigcup_{\gamma \in A} B_\gamma$ must be meager, so there is a Borel meager set $M$, still
coded in the $X$-extension, such that $\bigcup_{\gamma \in A} B_\gamma \subseteq M$. By construction, there is $\alpha < \kappa$ 
such that  $(A,M) = (A_\alpha, M_\alpha)$. Since $x_{\zeta_\alpha} \notin M_\alpha$ we must have
\[ f_\gamma (x_{\zeta_\alpha}) \in \varphi (x_{\zeta_\alpha}) \]
for all $\gamma \in A_\alpha$. This clearly contradicts the fact that 
\[ \{ f_\gamma (x_{\zeta_\alpha}) \; | \;  \gamma \in A_\alpha \} = \{ y_\xi \; | \;  \xi < \omega_1 \} \]
is a witness for $\bb (\in^*)$, and the proof of the theorem is complete.
\end{proof}

\subsection{The Random/Hechler Model}
The random/Hechler model refers to the finite support iteration of length $\kappa$ which results from alternating between adding random reals and Hechler reals. 

\begin{theorem}
In the random/Hechler model the following equalities hold.
\begin{enumerate}
\item
For all $\mathcal I \in \{\Null, \Me, \Kb\}$ we have $\mfb(\in^*_\mathcal I) = \aleph_1$
\item
All other cardinals of the form $\mfb(R_\mathcal I)$ are equal to $\kappa^+$.
\end{enumerate}
\end{theorem}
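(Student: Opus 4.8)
The plan is to pin down the classical cardinal characteristics in the random/Hechler model and then feed them into Lemmas \ref{lemmab<cov}, \ref{lemma5} and \ref{omegabounds}, exactly as in the previous subsections. Throughout let $G$ be generic for the length-$\kappa$ finite support iteration, and note that since this is a ccc forcing of size $\kappa$ over a model of $\GCH$ we have $\mfc = \kappa$ and $2^\kappa = \kappa^+$ in $V[G]$.

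First I would show that every ${\rm non}$ and every ${\rm cov}$ invariant equals $\mfc = \kappa$. Since the Hechler reals are dominating and are added cofinally, $\mfb = \mfd = \kappa$, and since each Hechler real adds a Cohen real we get ${\rm cov}(\Me) = \kappa$; moreover a Cohen real makes the ground model reals null, so any family of size ${<}\kappa$, which appears at some stage $\gamma < \kappa$, is made null by a later Cohen real, giving ${\rm non}(\Null) = \kappa$. Dually, the random reals make ${\rm cov}(\Null) = \kappa$, and since a random real makes the set of previously added reals meager, every ${<}\kappa$ family is covered by a later random-generic meager set, so ${\rm non}(\Me) = \kappa$. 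Together with ${\rm non}(\Kb) = \mfb = \kappa$ and ${\rm cov}(\Kb) = \mfd = \kappa$ this gives ${\rm non}(\mathcal I) = {\rm cov}(\mathcal I) = \kappa$ for every $\mathcal I \in \{\Null, \Me, \Kb\}$, and in particular $\mfb(\leq^*) = \mfb = \kappa$ and $\mfb(\neq^*) = {\rm non}(\Me) = \kappa$.

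The one delicate point, and the main obstacle, is to show that ${\rm add}(\Null) = \aleph_1$, i.e.\ $\mfb(\in^*) = \aleph_1$; this is precisely what separates the random/Hechler model from the pure Hechler model of Theorem \ref{Hechler}, where ${\rm cov}(\Null) = \aleph_1$ allowed a Rothberger family (via Lemma \ref{coveringfamilylemma}) to push $\mfb(\in^*_\Null)$ all the way up. Here one argues instead that the iteration adds no slalom $\in^*$-capturing every real of $V$: random forcing is $\baire$-bounding and adds no such localizing slalom, while Hechler forcing, being $\sigma$-centered, adds none either; a standard finite support preservation theorem (as developed in \cite{BarJu95}) then preserves a ground model $\aleph_1$-sized $\in^*$-unbounded family through the whole iteration, witnessing $\mfb(\in^*) = {\rm add}(\Null) = \aleph_1$. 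This preservation step is the part that requires genuine care, since no inequality of the Cicho\'n diagram forces ${\rm add}(\Null)$ to be small once $\mfb$, ${\rm cov}(\Me)$ and ${\rm cov}(\Null)$ are all large.

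With these values in hand the two items are immediate. For item (1), for each $\mathcal I$ we have $\mfb(\in^*) = \aleph_1 < \kappa = {\rm cov}(\mathcal I)$, so Lemma \ref{lemmab<cov} gives $\mfb(\in^*_\mathcal I) = \mfb(\in^*) = \aleph_1$. For item (2), let $R \in \{\leq^*, \neq^*\}$ and $\mathcal I \in \{\Null, \Me, \Kb\}$. Since $\mfb(R) = {\rm non}(\mathcal I) = \kappa = \mfc$, Lemma \ref{lemma5} yields $\kappa^+ \leq \mfb(R_\mathcal I)$, while Lemma \ref{omegabounds} gives $\mfb(R_\mathcal I) \leq \mfb(R)^{{\rm non}(\mathcal I)} = \kappa^\kappa = 2^\kappa = \kappa^+$; hence $\mfb(R_\mathcal I) = \kappa^+$, as claimed.
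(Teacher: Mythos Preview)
Your argument follows exactly the paper's approach: determine the Cicho\'n diagram in the random/Hechler model (everything is $\kappa$ except ${\rm add}(\Null)=\aleph_1$), then apply Lemma~\ref{lemmab<cov} for item~(1) and Lemma~\ref{lemma5} for item~(2). The paper simply asserts these Cicho\'n values; you supply the standard justifications, which is fine.

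One point to correct: your reason for ${\rm non}(\Me)=\kappa$ is wrong. A random real does \emph{not} make the previously added reals meager; random forcing preserves non-meagerness of the ground model reals (equivalently, it adds no Cohen real). Fortunately the conclusion is already forced by what you established earlier: $\mfb=\kappa$ and $\mfb\leq{\rm non}(\Me)\leq\mfc=\kappa$. So the slip is harmless, but you should either delete that sentence or attribute ${\rm non}(\Me)=\kappa$ to the Cohen reals coming from the Hechler steps (or simply to $\mfb=\kappa$).

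Everything else is in order. Your invocation of Lemma~\ref{omegabounds} for the upper bound in item~(2) is a correct alternative to the paper's implicit use of $|\bbb|=2^\mfc=\kappa^+$.
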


\begin{proof}
The values of the Cicho\'{n} diagram in the random/Hechler model are that all cardinals are $\kappa$ with the exception of ${\rm add} (\Null) = \mfb(\in^*) = \aleph_1$. It follows from Lemma \ref{lemmab<cov} that $\mfb(\in^*) = \mfb(\in^*_\mathcal I) = \aleph_1$ for all $\mathcal I \in \{\Null, \Me, \Kb\}$ and from Lemma \ref{lemma5} that all the other cardinals are $\kappa^+$.
\end{proof}

\subsection{The Eventually Different Model}
Recall that eventually different forcing $\mathbb{E}$ consists of pairs $(s, E)$ so that $s \in \omega^{<\omega}$ and $E \subseteq \baire$ is finite. The extension relation is $(s_0, E_0) \leq (s_1, E_1)$ just in case $s_0 \supseteq s_1$, $E_0 \supseteq E_1$ and for all $k \in {\rm dom}(s_0) \setminus {\rm dom}(s_1)$ we have $s_0(k) \neq f(k)$ for all $f \in E_1$. The eventually different model is the model obtained by adding $\kappa$-many eventually different reals with finite support over a model $\GCH$, see \cite[Model 7.5.6, p. 385]{BarJu95}. 

\begin{theorem}
In the eventually different model the following equalities hold. 
\begin{enumerate}
\item
For $R$ equal to $\in^*$ or $\leq^*$ we have that $\mfb(R_\Me) = \mfb(R_\Kb) = \aleph_1$
\item
$\mfb(\neq^*_\Me) = \kappa^+$
\item
$\mfb(\neq^*_\Kb) = \kappa$
\item
For all $R \in \{\in^*, \leq^*, \neq^*\}$ we have $\mfb(R_\Null) = \kappa^+$
\end{enumerate}
\end{theorem}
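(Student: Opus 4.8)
The plan is to first pin down the Cichon diagram of the eventually different model and then read off the values of all nine cardinals from the combinatorial lemmas of Section 3, the only genuinely new ingredient being a Rothberger family for $\Null$.

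First I would record the relevant cardinal characteristics on $\omega$. The generic real $e$ is eventually different from every ground model real, so every ground model real lies in the meager set $M_e = \{f \mid f \neq^* e\}$; working over a model of $\GCH$ with finite support, any family of reals of size ${<}\kappa$ appears at some stage ${<}\kappa$ by regularity of $\kappa$ and is then absorbed into one such meager set, whence ${\rm non}(\Me) = \mfb(\neq^*) = \kappa$. A finite support construction adds Cohen reals cofinally, so ${\rm cov}(\Me) = \mfd = \kappa$, and the $\ZFC$ inequality ${\rm cov}(\Me) \leq {\rm non}(\Null)$ then forces ${\rm non}(\Null) = \kappa$. On the other side, $\mathbb E$ adds no dominating real, a property preserved by the finite support iteration, so the ground model reals remain unbounded and $\mfb = {\rm add}(\Null) = \aleph_1$; by Rothberger's theorem ${\rm non}(\Kb) = \mfb = \aleph_1$ and ${\rm cov}(\Kb) = \mfd = \kappa$. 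In particular $\mfb(\in^*) = {\rm add}(\Null) = \aleph_1$, $\mfb(\leq^*) = \mfb = \aleph_1$, and $\mfb(\neq^*) = {\rm non}(\Me) = \kappa$.

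Items (1)--(3) are then immediate. For (1), since $\mfb(\in^*) = \mfb(\leq^*) = \aleph_1 < \kappa = {\rm cov}(\Me) = {\rm cov}(\Kb)$, Lemma \ref{lemmab<cov} gives $\mfb(R_\Me) = \mfb(R_\Kb) = \aleph_1$ for $R \in \{\in^*, \leq^*\}$. For (2), $\mfb(\neq^*) = {\rm non}(\Me) = \mfc$, so Lemma \ref{lemma5} yields $\mfb(\neq^*_\Me) \geq \kappa^+$, while Lemma \ref{omegabounds} gives $\mfb(\neq^*_\Me) \leq \mfb(\neq^*)^{{\rm non}(\Me)} = \kappa^\kappa = \kappa^+$. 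For (3), Lemma \ref{omegabounds} sandwiches $\kappa = \mfb(\neq^*) \leq \mfb(\neq^*_\Kb) \leq \mfb(\neq^*)^{{\rm non}(\Kb)} = \kappa^{\aleph_1} = \kappa$, using that $\kappa$ is regular above $\aleph_1$.

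Item (4) is the crux. The upper bound $\mfb(R_\Null) \leq \mfb(R)^{{\rm non}(\Null)} \leq \kappa^\kappa = \kappa^+$ holds for every $R$ by Lemma \ref{omegabounds}. When $R = \neq^*$ the matching lower bound is again Lemma \ref{lemma5}, since $\mfb(\neq^*) = {\rm non}(\Null) = \mfc$. For $R \in \{\in^*, \leq^*\}$, however, $\mfb(R) = \aleph_1 \neq \mfc$, so Lemma \ref{lemma5} is unavailable and I must instead exhibit a $(\kappa, \aleph_1)$-Rothberger family for $\Null$ and apply Lemma \ref{coveringfamilylemma}. The key observation is that $\mathbb E$ is $\sigma$-centered --- conditions sharing a stem $s$ are compatible and there are only countably many stems --- and adds a Cohen real, for instance $n \mapsto e(n) \bmod 2$ (at each coordinate only finitely many values are forbidden, so any bit pattern can be forced). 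This places us in exactly the setting of Theorem \ref{Hechler}, where, as noted, the computation of $\mfb(\in^*_\Null) = \kappa^+$ used nothing beyond $\sigma$-centeredness and the addition of Cohen reals: letting $c_\alpha$ be the Cohen real read off the $\alpha$th eventually different real and $N_{c_\alpha}$ the null set it codes through a fixed interval partition, Facts \ref{x_nfact} and \ref{x_nfact2} (stated for arbitrary finite support iterations of $\sigma$-centered forcings) show that any real whose hereditary support omits $\alpha$ is forced into $N_{c_\alpha}$; since supports are countable, every $\aleph_1$-sized subfamily of $\{N_{c_\alpha} \mid \alpha < \kappa\}$ is covering, so Lemma \ref{coveringfamilylemma} gives $\mfb(R_\Null) > \kappa$. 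This last construction is the only real obstacle, and it is largely defused by the remark following Theorem \ref{Hechler}: once one checks that $\mathbb E$ is $\sigma$-centered and adds Cohen reals (both routine), the hereditary-support argument transfers verbatim, and the remaining work --- verifying the standard Cichon values, in particular $\mfb = \aleph_1$ and ${\rm non}(\Me) = {\rm non}(\Null) = \kappa$ --- is pure bookkeeping.
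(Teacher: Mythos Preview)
Your proof is correct and follows essentially the same route as the paper: items (1)--(3) are read off from Lemmas \ref{lemmab<cov}, \ref{lemma5}, and \ref{omegabounds} using the standard Cicho\'n values of the eventually different model, and item (4) is reduced to the $(\kappa,\aleph_1)$-Rothberger family for $\Null$ produced in the Hechler argument, invoking only that $\mathbb E$ is $\sigma$-centered and adds Cohen reals. The only cosmetic difference is that the paper handles all three $R$ in item (4) at once via $\mfb(\in^*_\Null)\leq\mfb(\leq^*_\Null)\leq\mfb(\neq^*_\Null)$, whereas you split off $R=\neq^*$ and treat it with Lemma \ref{lemma5}; both are fine.
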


\begin{proof}
Recall that the values of the Cicho\'{n} diagram in the eventually different model are determined by $\mfb = {\rm cov}(\Null) = \aleph_1$ and ${\rm non}(\Me) = \mfb(\neq^*) = {\rm cov}(\Me) = \kappa$. From this item 1 follows from Lemma \ref{lemmab<cov} and item 2 follows from Lemma \ref{lemma5}. The third item follows from Lemma \ref{omegabounds} since $\mfb(\neq^*) = \kappa$ and ${\rm non} (\Kb) = \mfb = \aleph_1$. The fourth item is verbatim the same as the proof that $\mfb(\in^*_\Null) = \kappa^+$ holds in the Hechler model, see Theorem \ref{Hechler}. This follows from the observation made after that proof plus the fact that $\mathbb E$ is $\sigma$-centered and adds Cohen reals.
\end{proof}

\subsection{The Laver Model}
The Laver model is the $\aleph_2$ length countable support iteration of Laver forcing over a model of $\GCH$, see \cite[Model 7.6.13, p. 396]{BarJu95}. 
\begin{theorem}
In the Laver model the following hold.
\begin{enumerate}
\item
For $R$ equal to $\leq^*$ or $\neq^*$ we have $\mfb(R_\Kb) = \mfb(R_\Me) = \aleph_3$.
\item
For all $R \in \{\in^*, \leq^*, \neq^*\}$ we have $\mfb(R_\Null) = \aleph_2$
\item
$\mfb(\in^*_\Kb) = \aleph_1$
%\item
%$\mfb(\in^*_\Me) = ?$

\end{enumerate}
\end{theorem}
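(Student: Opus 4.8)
The plan is to deduce all three items purely from the four $\ZFC$ lemmas already available (Lemmas \ref{omegabounds}, \ref{lemma5}, \ref{lemmacov=1} and \ref{lemmab<cov}) once the classical Cicho\'n invariants of the Laver model, together with the relevant cardinal arithmetic, are in hand. First I would record the standard computation of the Cicho\'n diagram in the Laver model (see \cite{BarJu95}). Since each Laver real dominates the reals of the intermediate model and the iteration has length $\aleph_2$, a standard properness/$\aleph_2$-cc reflection argument gives $\mfb = \mfd = \aleph_2$, and then $\aleph_2 = \mfb \leq {\rm non}(\Me) \leq \mfc = \aleph_2$ forces ${\rm non}(\Me) = \aleph_2$. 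The remaining ``small'' invariants come from the preservation theory of countable support iterations: Laver forcing adds no Cohen real, so ${\rm cov}(\Me) = \aleph_1$; it adds no random real, so ${\rm cov}(\Null) = \aleph_1$; and it preserves outer measure, so the $\aleph_1$ ground model reals remain non-null and ${\rm non}(\Null) = \aleph_1$ (whence also ${\rm add}(\Null) = \aleph_1$). Translating via Miller's and Bartoszy\'nski's theorems this reads $\mfb(\leq^*) = \mfb(\neq^*) = \aleph_2$ and $\mfb(\in^*) = {\rm add}(\Null) = \aleph_1$, while the Rothberger characterization of $\Kb$ gives ${\rm non}(\Kb) = \mfb = \aleph_2$ and ${\rm cov}(\Kb) = \mfd = \aleph_2$. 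Finally, starting from $\GCH$ the iteration keeps $2^{\aleph_0} = 2^{\aleph_1} = \aleph_2$ and $2^{\aleph_2} = \aleph_3$, so $\aleph_1^{\aleph_1} = \aleph_2^{\aleph_1} = \aleph_2$ while $\aleph_2^{\aleph_2} = \aleph_3$.

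For item (1), fix $R \in \{\leq^*, \neq^*\}$ and $\mathcal I \in \{\Me, \Kb\}$. By the above $\mfb(R) = {\rm non}(\mathcal I) = \mfc = \aleph_2$, so Lemma \ref{lemma5} yields $\mfc^+ = \aleph_3 \leq \mfb(R_\mathcal I)$, and Lemma \ref{omegabounds} yields the matching upper bound $\mfb(R_\mathcal I) \leq \mfb(R)^{{\rm non}(\mathcal I)} = \aleph_2^{\aleph_2} = \aleph_3$. Hence all four cardinals equal $\aleph_3$.

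For item (2) I treat the lower and upper bounds separately. For $R \in \{\leq^*, \neq^*\}$ the lower bound $\mfb(R_\Null) \geq \mfb(R) = \aleph_2$ is immediate from Lemma \ref{omegabounds}, while for $R = \in^*$ I invoke Lemma \ref{lemmacov=1}: since ${\rm cov}(\Null) = \aleph_1$ we get $\mfb(\in^*_\Null) \geq \aleph_2$. For the upper bounds I use Lemma \ref{omegabounds} with ${\rm non}(\Null) = \aleph_1$, obtaining $\mfb(\in^*_\Null) \leq \aleph_1^{\aleph_1} = \aleph_2$ and $\mfb(\leq^*_\Null), \mfb(\neq^*_\Null) \leq \aleph_2^{\aleph_1} = \aleph_2$. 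Thus each of the three $\Null$-cardinals equals $\aleph_2$. Item (3) is even shorter: since $\mfb(\in^*) = \aleph_1 < \aleph_2 = \mfd = {\rm cov}(\Kb)$, Lemma \ref{lemmab<cov} gives $\mfb(\in^*_\Kb) = \mfb(\in^*) = \aleph_1$ outright.

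The genuinely nontrivial content is confined to the input facts about Laver forcing, and I expect these to be the main obstacle to present rigorously: that it adds no random real (the source of ${\rm cov}(\Null) = \aleph_1$, which drives the $\in^*_\Null$ lower bound) and, above all, that it preserves outer measure so that ${\rm non}(\Null) = \aleph_1$ (which is exactly what pins the upper bounds $\mfb(R)^{{\rm non}(\Null)}$ down to $\aleph_2$ rather than $\aleph_3$). Both rest on the preservation machinery for countable support iterations in \cite{BarJu95}; everything after that is bookkeeping against the four lemmas. It is worth noting that this method does \emph{not} determine $\mfb(\in^*_\Me)$: there Lemma \ref{lemmacov=1} still gives the lower bound $\aleph_2$, but since $\mfb(\in^*) = {\rm cov}(\Me) = \aleph_1$ Lemma \ref{lemmab<cov} does not apply, and Lemma \ref{omegabounds} only caps it at $\mfb(\in^*)^{{\rm non}(\Me)} = \aleph_1^{\aleph_2} = \aleph_3$, leaving the value open between $\aleph_2$ and $\aleph_3$; this presumably explains its absence from the statement.
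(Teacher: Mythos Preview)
Your proposal is correct and follows essentially the same approach as the paper: item (1) via Lemma \ref{lemma5} (with the upper bound from Lemma \ref{omegabounds} or just $2^\mfc = \aleph_3$), item (2) via Lemma \ref{lemmacov=1} for the lower bound and Lemma \ref{omegabounds} with ${\rm non}(\Null) = \aleph_1$ for the upper bound, and item (3) via Lemma \ref{lemmab<cov}. Your closing remark about $\mfb(\in^*_\Me)$ being undetermined by these lemmas exactly matches the paper's own observation.
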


Note that the one cardinal not determined by this theorem is $\mfb(\in^*_\Me)$. This is the one cardinal that we were not able to calculate in one of the ``standard iterated models". By the fact that ${\rm cov}(\Me) = \aleph_1$ in the Laver model, Lemma \ref{lemmacov=1} implies the value is either $\aleph_2$ or $\aleph_3$ but we do not know which one. To show that it is $\aleph_3$ it would suffice to exhibit an $(\aleph_3, \aleph_1)$-Rothberger family for $\Me$ however we do not know if one exists in the Laver model.

\begin{proof}
Recall that the values of the Cicho\'{n} diagram in the Laver model are determined by ${\rm cov}(\Null) = {\rm non}(\Null) = \aleph_1$ and $\mfb = \aleph_2$. Let $\vec{l} = \{l_\alpha \; | \; \alpha < \aleph_2\}$ enumerate the Laver reals added and work in $V[\vec{l}]$. The first item then follows from Lemma \ref{lemma5}. For the second item, the lower bound comes from Lemma \ref{lemmacov=1}. For the upper bound, observe that by Lemma \ref{omegabounds} we have $\mfb(\neq^*_\Null) \leq \mfb(\neq^*)^{{\rm non}(\Null)} = \aleph_2^{\aleph_1} = \aleph_2$. The third item follows from Lemma \ref{lemmab<cov}, recalling that ${\rm cov}(\Kb) = \mfd$ which is $\aleph_2$ in the Laver model. %%%last one?
\end{proof}

\subsection{The Miller Model}
The Miller model is the $\aleph_2$-length countable support iteration of Miller forcing over a model of $\GCH$, see \cite[Model 7.5.2, p. 382]{BarJu95}. 
\begin{theorem}
In the Miller model the following hold.
\begin{enumerate}
\item
For all $R \in \{\in^*, \leq^*, \neq^*\}$ we have $\mfb(R_\Me) = \mfb(R_\Null) =\aleph_2$
\item
For all $R \in \{\in^*, \leq^*, \neq^*\}$ we have $\mfb(R_\Kb) = \aleph_1$
\end{enumerate}
\end{theorem}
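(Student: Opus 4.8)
The plan is to read everything off from the known values of the Cicho\'n diagram in the Miller model together with Lemmas \ref{omegabounds}, \ref{lemmab<cov} and \ref{lemmacov=1}; no genuinely new argument is needed. First I would recall that in the Miller model every entry of the Cicho\'n diagram equals $\aleph_1$ except for $\mfd = {\rm cof}(\Me) = {\rm cof}(\Null) = \mfc = \aleph_2$. This is because Miller (superperfect tree) forcing adds an unbounded real, forcing $\mfd = \aleph_2$, but adds neither Cohen nor random reals, so that ${\rm cov}(\Me) = {\rm cov}(\Null) = {\rm non}(\Me) = {\rm non}(\Null) = \aleph_1$ and also $\mfb = \aleph_1$. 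In particular, for all three relations $R$ we have $\mfb(R) = \aleph_1$, since $\mfb(\leq^*) = \mfb$, $\mfb(\neq^*) = {\rm non}(\Me)$ and $\mfb(\in^*) = {\rm add}(\Null)$ are all $\aleph_1$, whereas ${\rm cov}(\Kb) = \mfd = \aleph_2$.

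Item 2 is then immediate. Since $\mfb(R) = \aleph_1 < \aleph_2 = {\rm cov}(\Kb)$ for every $R$, Lemma \ref{lemmab<cov} applied with $\mathcal I = \Kb$ gives $\mfb(R_\Kb) = \mfb(R) = \aleph_1$ for all three relations, which is exactly item 2.

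For item 1 I would handle the lower and upper bounds separately, treating $\Me$ and $\Null$ in parallel. For the lower bound, both ${\rm cov}(\Me)$ and ${\rm cov}(\Null)$ equal $\aleph_1$, so Lemma \ref{lemmacov=1} yields $\mfb(\in^*_\Me) \geq \aleph_2$ and $\mfb(\in^*_\Null) \geq \aleph_2$; propagating up the left-hand column of Figures \ref{figurenull} and \ref{figuremeager}, i.e. using $\mfb(\in^*_\mathcal I) \leq \mfb(\leq^*_\mathcal I) \leq \mfb(\neq^*_\mathcal I)$, then gives the lower bound $\aleph_2$ for all six cardinals. For the upper bound I would invoke Lemma \ref{omegabounds}: since ${\rm non}(\Me) = {\rm non}(\Null) = \aleph_1$ and $\mfb(R) = \aleph_1$, we get $\mfb(R_\mathcal I) \leq \mfb(R)^{{\rm non}(\mathcal I)} = \aleph_1^{\aleph_1} = 2^{\aleph_1}$, and $2^{\aleph_1} = \aleph_2$ in the Miller model, since a countable support iteration of length $\aleph_2$ of proper forcings of size at most $\aleph_1$ over a model of $\GCH$ forces $2^{\aleph_0} = 2^{\aleph_1} = \aleph_2$. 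Combining the two bounds shows all six cardinals equal $\aleph_2$.

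I do not expect a serious obstacle; the whole argument is a bookkeeping exercise over the lemmas of Section 3. The only points requiring care are citing the correct Miller-model values of the characteristics on $\omega$ — in particular the fact that Miller forcing keeps ${\rm cov}(\Me)$ and ${\rm cov}(\Null)$ at $\aleph_1$ while pushing $\mfd$, and hence ${\rm cov}(\Kb)$, up to $\aleph_2$ — and the cardinal-arithmetic computation $2^{\aleph_1} = \aleph_2$ that makes the bound supplied by Lemma \ref{omegabounds} collapse to $\aleph_2$. This mirrors the upper-bound computations already carried out for the iterated Sacks and Laver models, so I would phrase it in the same style and refer back to those.
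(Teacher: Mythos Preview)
Your proposal is correct and follows exactly the paper's own approach: item 1 via Lemma \ref{lemmacov=1} for the lower bound and Lemma \ref{omegabounds} (together with $2^{\aleph_1}=\aleph_2$) for the upper bound, and item 2 via Lemma \ref{lemmab<cov} using $\mfb(R)=\aleph_1<\mfd={\rm cov}(\Kb)$. You simply spell out the details that the paper leaves implicit.
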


\begin{proof}
Recall that the values of the Cicho\'{n} diagram in the Miller model are determined by $\mfd = \aleph_2$ and ${\rm non}(\Null) = {\rm non}(\Me) = \aleph_1$. The first item then follows by Lemmas \ref{lemmacov=1} (for the lower bound) and \ref{omegabounds} (for the upper bound). The second item follows from Lemma \ref{lemmab<cov}. 
\end{proof}

\section{$\ZFC$ Results for the Dominating Numbers}
In this section we explore $\ZFC$ provable equalities between the cardinals of the form $\mfd(R_\mathcal I)$. The main theorem of this section is the following, which shows that, unlike the bounding numbers, $\mfd(R_\mathcal I)$ does not depend on $\mathcal I$. Note that this strengthens Main Theorem \ref{mainthm4} and provides a proof of that theorem. We also investigate the relationship between the $\mfd(R_{\{\emptyset\}})$ cardinals and the cardinals $\mfd^\lambda_\kappa$ introduced in \cite{Br19}.

\begin{theorem}
Assume $\mathcal I$ is an ideal on $\baire$ so that $cof(\mathcal I) \leq \mfc$ and for all $X \in \mathcal I$ $|\baire \setminus X| = \mfc$. Then $\mfd(R_\mathcal I) = \mfd(R_{\{\emptyset\}})$ for all $R \in \{\in^*, \leq^*, \neq^*\}$. Here $\{\emptyset\}$ is the trivial ideal consisting only of the empty set.
\label{idealdoesntmatter}
\end{theorem}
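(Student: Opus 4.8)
We want to show that if $\mathcal I$ is an ideal on $\baire$ with $\cof(\mathcal I) \leq \mfc$ and every $\mathcal I$-positive set (indeed every complement of a set in $\mathcal I$) has size $\mfc$, then $\mfd(R_\mathcal I) = \mfd(R_{\{\emptyset\}})$ for each $R \in \{\in^*, \leq^*, \neq^*\}$. Recall that $f\, R_{\{\emptyset\}}\, g$ means $\{x : \neg(f(x) R g(x))\} = \emptyset$, i.e. $f(x) R g(x)$ holds for \emph{every} $x$. Thus $\{\emptyset\}$-domination is the strongest possible: a dominating family must $R$-dominate pointwise everywhere. Since $\{\emptyset\} \subseteq \mathcal I$, any $R_{\{\emptyset\}}$-dominating family is trivially $R_\mathcal I$-dominating, which gives $\mfd(R_\mathcal I) \leq \mfd(R_{\{\emptyset\}})$ for free. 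The content is therefore the reverse inequality $\mfd(R_{\{\emptyset\}}) \leq \mfd(R_\mathcal I)$.

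\textbf{The plan.} The plan is to take an $R_\mathcal I$-dominating family $\mathcal D$ of minimal size $\mfd(R_\mathcal I)$ and manufacture from it an $R_{\{\emptyset\}}$-dominating family of the same cardinality, using the two hypotheses on $\mathcal I$ to repair the ``bad'' $\mathcal I$-set on which a given $g \in \mathcal D$ fails to $R$-bound a target $f$. First I would fix a cofinal family $\{C_\xi : \xi < \mfc\} \subseteq \mathcal I$ witnessing $\cof(\mathcal I) \leq \mfc$, so that every set in $\mathcal I$ is contained in some $C_\xi$. The key structural observation is that whenever $f\, R_\mathcal I\, g$, the exceptional set $\{x : \neg(f(x) R g(x))\}$ lies in $\mathcal I$, hence inside some $C_\xi$; on the complement $\baire \setminus C_\xi$ — which by hypothesis has full size $\mfc$ — we have genuine pointwise $R$-domination $f(x) R g(x)$. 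The idea is then to build a larger family indexed by pairs $(g, \xi)$ with $g \in \mathcal D$ and $\xi < \mfc$, producing for each such pair a single function $g_\xi \in \bbb$ that agrees with $g$ off $C_\xi$ and is ``patched'' on $C_\xi$ so as to $R$-dominate everything there. Since $|\mathcal D| \cdot \mfc = \mfd(R_\mathcal I)$ (because every relevant dominating number is at least $\mfc$, as noted via Corollary~\ref{lemmamfd}), the enlarged family still has size $\mfd(R_\mathcal I)$.

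\textbf{Carrying out the patch.} For the patch on $C_\xi$ I would use that $R$ has the feature that any single element of $\baire$ (or any countable set) can be $R$-dominated; more to the point, I want a function that simultaneously $R$-bounds all possible targets restricted to $C_\xi$. The clean way to do this is to fix, once and for all, an $R_{\{\emptyset\}}$-dominating family $\mathcal E$ restricted to the index set $C_\xi$ — but this is circular unless $|C_\xi|$ is small. The better route, and the one I would pursue, is to transfer the problem to the index set $\baire \setminus C_\xi$ of full size: given a target $f$, let $\xi$ be such that the failure set of some $g \in \mathcal D$ against $f$ sits inside $C_\xi$; then fix a bijection $\pi_\xi : \baire \to \baire \setminus C_\xi$ (possible since $|\baire \setminus C_\xi| = \mfc = |\baire|$) and define $g_\xi$ on all of $\baire$ by $g_\xi(x) = g(\pi_\xi(x))$. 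Composing the original $f$ with $\pi_\xi$ as well, pointwise domination off $C_\xi$ becomes pointwise domination everywhere after transport. Thus the family $\{g \circ \pi_\xi : g \in \mathcal D,\ \xi < \mfc\}$, together with the bookkeeping that every target $f$ factors through some $(g,\xi)$, should be $R_{\{\emptyset\}}$-dominating.

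\textbf{Expected obstacle.} The main subtlety is the bookkeeping that links an arbitrary target $f : \baire \to \baire$ to the \emph{correct} pair $(g, \xi)$, because the bijection $\pi_\xi$ depends on $\xi$ while the choice of $g$ (and hence of the failure set and hence of $\xi$) depends on $f$. I expect the cleanest resolution is to reverse the quantifiers: rather than transporting a fixed $f$, I would show that for \emph{every} $f$ there is a pair $(g,\xi)$ such that $f(x) R (g\circ\pi_\xi^{-1})(x)$ for all $x \in \baire \setminus C_\xi$, and then argue that a suitably indexed family of transported functions catches all $f$ pointwise everywhere by absorbing the residual set $C_\xi$ into a second coordinate of the construction. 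Making this quantifier juggling precise — ensuring a single uniformly defined family works for all targets simultaneously, rather than one tailored per target — is where the real care is needed; the cardinal arithmetic ($|\mathcal D|\cdot\mfc = \mfd(R_\mathcal I)$) and the pointwise-domination transport are routine by comparison.
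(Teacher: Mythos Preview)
Your proposal correctly identifies the easy direction and correctly flags where the difficulty lies, but the ``quantifier juggling'' you describe is not a mere technicality to be cleaned up --- it is a genuine circularity that the transport-by-bijection idea does not resolve. Given a target $f$, you would need a pair $(g,\xi)$ with $f(x)\,R\,g(\pi_\xi(x))$ for all $x$; equivalently, $g$ must $R$-dominate $f\circ\pi_\xi^{-1}$ pointwise on $\baire\setminus C_\xi$. But to choose $g\in\mathcal D$ you must apply $R_\mathcal I$-domination to $f\circ\pi_\xi^{-1}$, which already requires $\xi$; and the failure set of that $g$ then lies in some \emph{other} $C_{\xi'}$, with no reason to have $\xi'=\xi$. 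There is no fixed point guaranteed here, and iterating does not obviously terminate. Your proposed family $\{g\circ\pi_\xi : g\in\mathcal D,\ \xi<\mfc\}$ simply need not be $R_{\{\emptyset\}}$-dominating.

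The paper sidesteps this by arguing contrapositively: fix $\kappa<\mfd(R_{\{\emptyset\}})$ and any family $\mathcal F=\{f_\alpha:\alpha<\kappa\}$, and build a single $g$ that no $f_\alpha$ $R_\mathcal I$-dominates. Using $\cof(\mathcal I)\leq\mfc$ and the largeness of complements, one constructs pairwise distinct points $\{y_{\alpha,\beta}:\alpha,\beta<\mfc\}$ with $y_{\alpha,\beta}\notin X_\alpha$, where $\{X_\alpha:\alpha<\mfc\}$ is cofinal in $\mathcal I$. For each fixed $\alpha$ the column $\{y_{\alpha,\beta}:\beta<\mfc\}$ has size $\mfc$, so the restrictions of $\mathcal F$ to it --- a family of size $\kappa<\mfd(R_{\{\emptyset\}})$ --- fail to $R_{\{\emptyset\}}$-dominate there; pick a local witness and glue across $\alpha$ (the columns are disjoint). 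The resulting $g$ escapes every $f_\gamma$ modulo $\mathcal I$, since for every $\alpha$ the failure set meets $\baire\setminus X_\alpha$. The crucial move is that the small family $\mathcal F$ is fixed \emph{before} any ideal index is chosen, so the dependence runs only one way.

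A smaller, separate issue: your appeal to Corollary~\ref{lemmamfd} to conclude $|\mathcal D|\cdot\mfc=\mfd(R_\mathcal I)$ is circular as written, since in the paper that corollary is a \emph{consequence} of the theorem you are proving; you would need to establish $\mfd(R_\mathcal I)>\mfc$ directly first.
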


Note that $\Null$, $\Me$ and $\Kb$ all fulfill the hypotheses on $\mathcal I$ in the theorem statement. This theorem is somewhat analogous to the fact that e.g. the dominating number (on $\omega$) is unchanged if we insist on everywhere dominating versus mod finite domination. Note that there as well this fact does not hold true of unbounded sets.

\begin{proof}
Fix an ideal $\mathcal I$ as in the statement of the theorem. Clearly any $R_{\{\emptyset\}}$-dominating family is $R_\mathcal I$-dominating hence $\mfd(R_\mathcal I) \leq \mfd (R_{\{\emptyset\}})$ so we need to just prove the reverse inequality. Fix $\kappa < \mfd(R_{\{\emptyset\}})$ and a family of $\kappa$ many functions $\mathcal F = \{f_\alpha \; | \; \alpha < \kappa\}$. We need to see that $\mathcal F$ is not dominating. 

Enumerate (possibly with repetitions) a cofinal family in $\mathcal I$, $\{X_\alpha \; | \; \alpha < 2^{\aleph_0}\}$. Also, let $\bar{\gamma}:2^{\aleph_0} \times 2^{\aleph_0} \to 2^{\aleph_0}$ be a bijection. Inductively on $\gamma < 2^{\aleph_0}$ for $\gamma = \bar{\gamma}(\alpha, \beta)$ define $y_{\alpha, \beta} \in \baire$ so that $y_{\alpha, \beta} \neq y_{\alpha ' , \beta '}$ for $(\alpha, \beta) \neq (\alpha ', \beta ' )$ and $y_{\alpha, \beta} \notin X_\alpha$ for all $\beta < 2^{\aleph_0}$. Note that this is possible by the assumption that $\baire \setminus X_\alpha$ has size continuum for all $\alpha$.

For each $\alpha < 2^{\aleph_0}$ consider the restriction of $\mathcal F$ to $\{y_{\alpha, \beta} \; | \; \beta < 2^{\aleph_0}\}$ i.e. $\{f_\gamma \hook \{y_{\alpha, \beta} \; | \; \beta < 2^{\aleph_0}\} \; | \; f_\gamma \in \mathcal F\}$. Since $\kappa < \mfd (R_{\{\emptyset\}})$, none of these sets are dominating so there is a $g:\baire \to \baire$ so that for all $f \in \mathcal F$ and all $\alpha < 2^{\aleph_0}$ there is a $\beta$ so that $\neg(g(y_{\alpha, \beta}) R f (y_{\alpha, \beta}))$. But since the $X_\alpha$'s formed a basis for $\mathcal I$ it follows that $g$ is not $R_\mathcal I$-bounded by any $f \in \mathcal F$, as needed.
\end{proof}

As a result of this theorem we have the following.
\begin{corollary}
For all $R$ and $\mathcal I$ we have $\mfd(R_\mathcal I) > 2^{\aleph_0}$. 
\label{lemmamfd}
\end{corollary}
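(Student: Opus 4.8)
The goal is to deduce Corollary \ref{lemmamfd} from Theorem \ref{idealdoesntmatter}, so the plan is to reduce the inequality $\mfd(R_\mathcal I) > 2^{\aleph_0}$ to a statement about the trivial ideal and then argue directly for $\{\emptyset\}$. First I would invoke Theorem \ref{idealdoesntmatter}: since each of $\Null$, $\Me$, $\Kb$ satisfies the two hypotheses (cofinality at most $\mfc$, and every member having complement of full size $\mfc$), we have $\mfd(R_\mathcal I) = \mfd(R_{\{\emptyset\}})$ for every $R$ and every $\mathcal I$ under consideration. Hence it suffices to show the single inequality $\mfd(R_{\{\emptyset\}}) > 2^{\aleph_0}$, i.e.\ that no family of continuum-many functions in $\bbb$ is $R_{\{\emptyset\}}$-dominating.

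The core step is a diagonalization against a family $\mathcal F = \{f_\alpha \mid \alpha < 2^{\aleph_0}\}$ of size continuum. Note that $f R_{\{\emptyset\}} g$ means $f(x) R g(x)$ for \emph{every} $x \in \baire$, with no exceptional set allowed. To build a $g$ that no $f_\alpha$ dominates everywhere, I would enumerate $\baire = \{x_\alpha \mid \alpha < 2^{\aleph_0}\}$ and, at stage $\alpha$, define $g(x_\alpha)$ to be some value $z$ with $\neg(z \mathbin{R} f_\alpha(x_\alpha))$. Such a $z$ always exists: for $R = \leq^*$ take $z$ pointwise above $f_\alpha(x_\alpha)$ plus one; for $R = \neq^*$ take $z$ agreeing with $f_\alpha(x_\alpha)$ everywhere; and for the slalom relation $\in^*$ one picks a real eventually outside the given slalom, which exists since a slalom $s$ has $|s(n)| \le n$ and cannot cover all of $\omega$ at each coordinate. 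Then $g$ witnesses that $f_\alpha$ is not an $R_{\{\emptyset\}}$-bound for $g$, because they already fail to be $R$-related at the single point $x_\alpha$, and a single point is not in $\{\emptyset\}$.

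The main subtlety — really the only place care is needed — is that the diagonalization must be carried out against the \emph{correct} indexing. Since $\mathcal F$ and $\baire$ both have size exactly $2^{\aleph_0}$, I align the two enumerations so that at stage $\alpha$ I defeat $f_\alpha$ using the as-yet-unused point $x_\alpha$; this guarantees every $f_\alpha$ is eventually defeated while keeping $g$ well defined (each $x_\alpha$ is assigned a value exactly once). I would note that this argument does not require the full strength of Theorem \ref{idealdoesntmatter} — one could diagonalize directly against the ideal — but routing through the theorem is cleanest and makes the corollary immediate. The proof then concludes: $\mfd(R_{\{\emptyset\}}) > 2^{\aleph_0}$, hence $\mfd(R_\mathcal I) > 2^{\aleph_0}$ for all $R$ and $\mathcal I$.

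\begin{proof}
By Theorem \ref{idealdoesntmatter} it suffices to show $\mfd(R_{\{\emptyset\}}) > 2^{\aleph_0}$, since $\Null$, $\Me$ and $\Kb$ all satisfy its hypotheses. Fix a family $\mathcal F = \{f_\alpha \mid \alpha < 2^{\aleph_0}\}$ and enumerate $\baire = \{x_\alpha \mid \alpha < 2^{\aleph_0}\}$. Define $g$ by letting $g(x_\alpha)$ be any value with $\neg(g(x_\alpha) \mathbin{R} f_\alpha(x_\alpha))$; such a value exists in each case ($\leq^*$: take a real dominating $f_\alpha(x_\alpha)$ strictly; $\neq^*$: take $f_\alpha(x_\alpha)$ itself; $\in^*$: take a real eventually outside the slalom $f_\alpha(x_\alpha)$, possible as $|f_\alpha(x_\alpha)(n)| \le n$). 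Then for each $\alpha$ we have $\neg(g R_{\{\emptyset\}} f_\alpha)$, since $g$ and $f_\alpha$ already fail to be $R$-related at $x_\alpha$. Hence $\mathcal F$ is not $R_{\{\emptyset\}}$-dominating, and since $\mathcal F$ was arbitrary of size continuum, $\mfd(R_{\{\emptyset\}}) > 2^{\aleph_0}$.
\end{proof}
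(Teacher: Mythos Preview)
Your proof is correct and follows essentially the same approach as the paper: reduce to the trivial ideal via Theorem \ref{idealdoesntmatter}, then diagonalize by enumerating $\baire$ and $\mathcal F$ in parallel and choosing $g(x_\alpha)$ not $R$-below $f_\alpha(x_\alpha)$. Your version even spells out why such a value exists for each relation, which the paper leaves implicit.
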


\begin{proof}
In light of Theorem \ref{idealdoesntmatter} it suffices to show that for all $R$, $\mfd(R_{\{\emptyset\}} )> 2^{\aleph_0}$. Fix a family $\mathcal F = \{f_\alpha \; | \; \alpha < 2^{\aleph_0}\} \subseteq \bbb$ (or $\mathcal S^{\baire}$ in the case of $R = \in^*$). Enumerate $\baire$ as $\{x_\alpha \; | \; \alpha < 2^{\aleph_0}\}$. Define $g:\baire \to \baire$ so that $g(x_\alpha)$ is not $R$ below $f_\alpha(x_\alpha)$. It follows that for each $\alpha$ there is an $x$ so that $\neg(g(x) R f_\alpha(x))$ and hence $\mathcal F$ does not contain a bound on $g$ so it is not dominating. 
\end{proof}

In the next section we study consistent inequalities between the dominating numbers of the relations $R_\mathcal I$. The computations of these cardinals in various models factor through relating the cardinals $\mfd(R_{\{\emptyset\}})$ to the cardinals $\mfd_\kappa^\lambda$ of \cite[Section 4]{Br19}. In the rest of this section we establish $\ZFC$ results between these two families of cardinals. Recall that if $\lambda \geq \kappa$ and $f, g \in \kappa^\lambda$ then we let $f \leq^* g$ if and only if $\{\alpha < \lambda \; | \; f(\alpha) > g(\alpha)\}$ has size less than $\kappa$. The cardinal $\mfd^\lambda_\kappa$ is the dominating number of this relation. The following, which was proved in \cite{Br19}, will be useful for us.

\begin{fact}[Proposition 12 of \cite{Br19}]
The value of $\mfd^\lambda_\kappa$ is unchanged if we work with the total domination relation as opposed to the mod ${<} \kappa$ domination.
\end{fact}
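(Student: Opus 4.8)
The plan is to prove that the value of $\mfd^\lambda_\kappa$ is the same whether we use total domination or mod ${<}\kappa$ domination. Write $\mfd^\lambda_\kappa(\text{tot})$ for the dominating number with respect to the \emph{total} relation $f \leq g$ (meaning $f(\alpha) \leq g(\alpha)$ for \emph{all} $\alpha < \lambda$) and $\mfd^\lambda_\kappa$ for the relation $\leq^*$ defined in the excerpt. Since everywhere domination implies mod ${<}\kappa$ domination, any totally dominating family is also $\leq^*$-dominating, so immediately $\mfd^\lambda_\kappa \leq \mfd^\lambda_\kappa(\text{tot})$. The content of the claim is the reverse inequality $\mfd^\lambda_\kappa(\text{tot}) \leq \mfd^\lambda_\kappa$.

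First I would fix a $\leq^*$-dominating family $\{g_i : i < \mfd^\lambda_\kappa\} \subseteq \kappa^\lambda$ of minimal size and try to manufacture from it a totally dominating family of the same cardinality. The natural idea is to close the family under finitely-many-coordinate modifications that repair the ``bad'' set where $g_i$ fails to dominate. The key structural point is that for $f \leq^* g_i$, the failure set $B = \{\alpha < \lambda : f(\alpha) > g_i(\alpha)\}$ has size ${<}\kappa$, and on such a small set we can raise values to obtain total domination. So for each $g_i$ and each subset $B \in [\lambda]^{<\kappa}$ together with each ``patch'' function $s : B \to \kappa$, I would form the modified function $g_i^{B,s}$ which agrees with $g_i$ off $B$ and equals $s$ on $B$. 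The family of all such $g_i^{B,s}$ is totally dominating: given $f$, pick $i$ with $f \leq^* g_i$, let $B$ be the (size ${<}\kappa$) failure set, and let $s = f \hook B$; then $g_i^{B,s}$ totally dominates $f$.

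The main obstacle — and the step that determines whether the bound is sharp — is controlling the cardinality of this enlarged family. The number of patches is $|[\lambda]^{<\kappa}| \cdot \kappa^{<\kappa}$, which could a priori exceed $\mfd^\lambda_\kappa$. The resolution should be a counting argument showing that $|[\lambda]^{<\kappa}| \cdot \kappa^{<\kappa} \le \mfd^\lambda_\kappa$, so that the product $\mfd^\lambda_\kappa \cdot |[\lambda]^{<\kappa}| \cdot \kappa^{<\kappa}$ collapses back to $\mfd^\lambda_\kappa$. For this I would invoke the known lower bounds on $\mfd^\lambda_\kappa$ from \cite{Br19} (in particular that $\mfd^\lambda_\kappa$ dominates the relevant cardinal arithmetic terms $\kappa^{<\kappa}$ and $\lambda^{<\kappa}$ under the standing assumptions on $\kappa,\lambda$); indeed a dominating family must in particular be large enough to realize sufficiently many distinct behaviors on any fixed $\kappa$-sized coordinate block, which forces $\kappa^{<\kappa}$ and $|[\lambda]^{<\kappa}|$ beneath it. Once this arithmetic is in hand, the union has size $\mfd^\lambda_\kappa$ and is totally dominating, giving $\mfd^\lambda_\kappa(\text{tot}) \le \mfd^\lambda_\kappa$ and hence equality.

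An alternative, and perhaps cleaner, route avoids the patching entirely: I would argue that a single $\leq^*$-dominating family can be converted coordinatewise. Partition or reindex so that one can absorb the ${<}\kappa$-sized exceptional sets uniformly, using that $\kappa$ is regular and that the exceptional sets are each bounded-in-$\kappa$ in size; a book-keeping argument then shows the ${<}\kappa$ many ``mistakes'' can be simultaneously corrected without increasing the family's size, because $\mfd^\lambda_\kappa$ is already closed under the operation of taking pointwise suprema of ${<}\kappa$-many members on any ${<}\kappa$-sized coordinate set. Either way, the crux is the same cardinal-arithmetic fact that the overhead of the correction is swallowed by $\mfd^\lambda_\kappa$ itself, so I would expect the proof in \cite{Br19} to isolate exactly this inequality as its main lemma.
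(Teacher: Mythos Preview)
The paper does not give a proof of this statement: it is quoted as a Fact with the citation ``Proposition 12 of \cite{Br19}'' and no argument is supplied. So there is no ``paper's own proof'' to compare your proposal against; the result is simply imported from the reference.

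That said, your sketch is more elaborate than necessary and the cardinal-arithmetic step you flag as the crux (that $|[\lambda]^{<\kappa}|\cdot\kappa^{<\kappa}\le\mfd^\lambda_\kappa$) is both stronger than needed and not obviously available in general. A much lighter argument works when $\kappa$ is regular: if $f\leq^* g$ with failure set $B\in[\lambda]^{<\kappa}$, then $f\hook B$ has range bounded in $\kappa$, say by some $\beta<\kappa$; hence the function $g^\beta$ defined by $g^\beta(\alpha)=\max(g(\alpha),\beta)$ dominates $f$ everywhere. Thus the family $\{g^\beta: g\in\mathcal G,\ \beta<\kappa\}$ is totally dominating and has size $|\mathcal G|\cdot\kappa=|\mathcal G|$, using only the easy diagonal bound $\mfd^\lambda_\kappa>\kappa$. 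Your patching-by-arbitrary-$s:B\to\kappa$ overshoots: you only need to raise $g$ by a single constant $\beta$, not match $f$ exactly on $B$, and this collapses the overhead from $|[\lambda]^{<\kappa}|\cdot\kappa^{<\kappa}$ down to $\kappa$.
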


The main question, which is open, is whether every cardinal $\mfd(R_{\{\emptyset\}})$ is provably equal to a cardinal of the form $\mfd^\lambda_\kappa$. Indeed, in every model we know the value of $\mfd(R_{\{\emptyset\}})$ is equal to that of $\mfd^\mfc_{\mfb(R)}$. We know that this equality follows from certain extra assumptions about $R$. To explain this result we need a few more definitions.
\begin{definition}
Fix a relation $R \in \{\in^*, \leq^*, \neq^*\}$ and an arbitrary cardinal $\kappa$.

\begin{enumerate}
\item
A family $\{x_\alpha \; | \; \alpha < \kappa\} \subseteq \baire$ is an {\em eventually} $R$-{\em dominating} sequence if for all $y \in \baire$ there is an $\alpha_0 < \kappa$ so that for all $\alpha \in [\alpha_0, \kappa)$, $yRx_\alpha$.
\item
A family $\{x_\alpha \; | \; \alpha < \kappa\} \subseteq \baire$ is an {\em eventually} $R$-{\em unbounded} sequence if for all $y \in \baire$ there is an $\alpha_0 < \kappa$ so that for all $\alpha \in [\alpha_0, \kappa)$, $\neg(x_\alpha R y)$.
\end{enumerate}

\end{definition}

It is not hard to check that these two notions are dual to one another i.e. $R$-eventually dominating is $\neg \check{R}$-eventually unbounded where $\neg \check{R}$ is the dual relation to $R$. Note also that the existence of an eventually $R$-dominating sequence of length $\kappa$ implies that $\mfb(R) \geq cf(\kappa)$ and $\mfd(R) \leq cf(\kappa)$. Dually, the existence of an eventually $R$-unbounded sequence of length $\kappa$ implies $\mfb(R) \leq cf(\kappa)$ and $\mfd(R) \geq cf(\kappa)$. Some examples will be helpful also moving forward.

\begin{example}
\begin{enumerate}
\item
In $\ZFC$ there is an eventually $\leq^*$-unbounded sequence of length $\mfb$. Meanwhile the existence of an eventually $\leq^*$-dominating sequence of length $\kappa$ implies that $\mfb = \mfd = cf(\kappa)$. Conversely if $\mfb = \mfd = \kappa$ then there is an eventually $\leq^*$-dominating sequence of length $\kappa$.

\item
In the Cohen model there are eventually $\neq^*$-unbounded sequences of length $\kappa$ for $\aleph_1 \leq \kappa \leq \mfc$ but no eventually $\neq^*$-dominating sequences of any length.

\item
In the random model there are eventually $\neq^*$-dominating sequences of length $\kappa$ for $\aleph_1 \leq \kappa \leq \mfc$ but no eventually $\neq^*$-unbounded sequences of any length.
\end{enumerate}
 
\end{example}

\begin{lemma}
Fix $R \in \{\in^*, \leq^*, \neq^*\}$ and a cardinal $\kappa \leq \mfc$.
\begin{enumerate}
\item
If there is an eventually $R$-dominating sequence of length $\kappa$ then $\mfd(R_{\{\emptyset\}}) \leq \mfd^\mfc_{\kappa}$.

 \item
If there is an eventually $R$-unbounded sequence of length $\kappa$ then $\mfd(R_{\{\emptyset\}}) \geq \mfd^\mfc_{\kappa}$.
\end{enumerate}
\label{eventualsequences}
\end{lemma}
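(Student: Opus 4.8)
The plan is to prove both inequalities by passing back and forth between functions $\baire \to \baire$ (or $\baire \to \mathcal S$ when $R = \in^*$) and elements of $\kappa^\mfc$, using the given eventually $R$-dominating or $R$-unbounded sequence as a ``dictionary'' that converts an ordinal $\alpha < \kappa$ into the concrete bound $x_\alpha$. Throughout I fix a bijection between $\baire$ and $\mfc$, so that a function $\baire \to \kappa$ is literally an element of $\kappa^\mfc$, and I use \cite[Proposition 12]{Br19} to work with the family witnessing $\mfd^\mfc_\kappa$ as one dominating \emph{everywhere} rather than merely mod ${<}\kappa$; this everywhere version is exactly what matches the everywhere relation $R_{\{\emptyset\}}$. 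For uniformity across the three relations I regard $R$ as a relation between a ``left'' space (always $\baire$) and a ``right'' space ($\baire$ for $\leq^*,\neq^*$ and $\mathcal S$ for $\in^*$); an eventually $R$-dominating sequence then lives in the right space and an eventually $R$-unbounded sequence in the left space, which is precisely what makes the types match in the $\in^*$ case.

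For part (1), fix an eventually $R$-dominating sequence $\{x_\alpha \mid \alpha < \kappa\}$. For each $y$ in the left space let $\alpha_0(y) < \kappa$ be least such that $y R x_\alpha$ for all $\alpha \in [\alpha_0(y),\kappa)$; this is well defined exactly because the sequence is eventually dominating, and crucially $\alpha_0(y) < \kappa$. Given $f \in \bbb$, define $\phi_f \in \kappa^\mfc$ by $\phi_f(x) = \alpha_0(f(x))$. Let $D \subseteq \kappa^\mfc$ be an everywhere-dominating family of size $\mfd^\mfc_\kappa$, and for $d \in D$ set $g_d(x) = x_{d(x)}$. If $d$ dominates $\phi_f$ everywhere then for every $x$ we have $d(x) \geq \alpha_0(f(x))$, hence $f(x) R x_{d(x)} = g_d(x)$; that is, $f\, R_{\{\emptyset\}}\, g_d$. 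Thus $\{g_d \mid d \in D\}$ is $R_{\{\emptyset\}}$-dominating and $\mfd(R_{\{\emptyset\}}) \leq \mfd^\mfc_\kappa$.

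Part (2) is the exact dual. Fix an eventually $R$-unbounded sequence $\{x_\alpha \mid \alpha < \kappa\}$ and let $A$ be \emph{any} $R_{\{\emptyset\}}$-dominating family; I will produce from it a dominating family in $\kappa^\mfc$ of the same cardinality, which yields $\mfd^\mfc_\kappa \leq |A|$ and hence the claimed inequality. For each $z$ in the right space let $\beta_0(z) < \kappa$ be least such that $\neg(x_\alpha R z)$ for all $\alpha \geq \beta_0(z)$, and for $g \in A$ put $\psi_g(x) = \beta_0(g(x)) \in \kappa^\mfc$. Now take an arbitrary $h \in \kappa^\mfc$ and feed the function $f(x) = x_{h(x)}$ (which lands in the left space) to $A$: there is $g \in A$ with $x_{h(x)} R g(x)$ for every $x$. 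Were $h(x) \geq \psi_g(x) = \beta_0(g(x))$ for some $x$, the definition of $\beta_0$ would give $\neg(x_{h(x)} R g(x))$, a contradiction; so $h(x) < \psi_g(x)$ for all $x$. Hence $\{\psi_g \mid g \in A\}$ dominates everywhere, as desired.

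The arguments are short once the threshold functions $\phi_f,\psi_g$ are in place, so the real content — and the only place one can slip — is the bookkeeping around totality. The two points to get right are: (i) that $\alpha_0(y)$ and $\beta_0(z)$ are genuinely ${<}\kappa$ (so that $\phi_f,\psi_g$ really are elements of $\kappa^\mfc$ and the indices $x_{d(x)},x_{h(x)}$ make sense), which is immediate from the defining clauses of the two kinds of sequence; and (ii) that in part (1) one must use an \emph{everywhere}-dominating $D$ rather than a mod-${<}\kappa$ one, since a single exceptional coordinate would destroy the everywhere relation $R_{\{\emptyset\}}$ — this is exactly where \cite[Proposition 12]{Br19} is used. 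I expect no genuine obstacle beyond making these typing and totality checks and confirming that the $\in^*$ case fits the left/right space convention above.
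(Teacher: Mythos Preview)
Your proof is correct and follows essentially the same approach as the paper: for part (1) you build the dominating family by setting $g_d(x)=x_{d(x)}$ from an everywhere-dominating $D\subseteq\kappa^\mfc$, exactly as the paper does (the paper's $g'(y_\gamma)=x_{g(\gamma)}$), and your threshold function $\alpha_0$ just makes explicit the verification the paper leaves as ``it follows''. For part (2) the paper simply says ``by duality'', and your argument with $\beta_0$ and $\psi_g$ is precisely the natural dual spelled out in full; your care with the left/right space convention for $\in^*$ and the use of \cite[Proposition~12]{Br19} for everywhere domination are appropriate and match the paper's setup.
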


\begin{proof}
Let $\mathcal F = \{x_\alpha \; | \; \alpha < \kappa\} \subseteq \baire$ be an eventually $R$-dominating sequence of length $\kappa$. Let $\mathcal G$ be a witness for $\mfd^\mfc_\kappa$. Also let $\{y_\gamma \; | \; \gamma < \mfc\}$ enumerate $\baire$. For $g \in \mathcal G$ define $g' \in \bbb$ so that $g'(y_\gamma) = x_{g(\gamma)}$. It follows that $\mathcal G ' =\{g' \; | \; g \in \mathcal G\}$ is a $R_{\{\emptyset\}}$-dominating family, as needed.

The second item is proved the same way by duality.
\end{proof}

As a corollary of this lemma we have the following in the case that $R = \leq^*$. Note that this relation is particularly nice because it is a partial order (when moded out by equality ``mod finite") and hence there is in $\ZFC$ an eventually $\leq^*$-unbounded family of size $\mfb$ and therefore $\mfd(\leq^*_{\{\emptyset\}}) \geq \mfd^\mfc_\mfb$.

\begin{lemma}
If $\mfb = \mfd$ then $\mfd(\leq^*_{\{\emptyset\}}) = \mfd^\mfc_\mfb$.
\label{scale}
\end{lemma}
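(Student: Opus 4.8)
The plan is to read this off directly from the two halves of Lemma \ref{eventualsequences} together with the standard characterization of $\mfb = \mfd$ via the existence of a scale. Throughout set $\kappa = \mfb = \mfd$, noting $\kappa \leq \mfc$, so that Lemma \ref{eventualsequences} applies with this $\kappa$.

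The inequality $\mfd(\leq^*_{\{\emptyset\}}) \geq \mfd^\mfc_\mfb$ requires no hypothesis at all and is exactly the remark preceding the statement: since $\leq^*$ is a partial order (modulo mod-finite equality), $\ZFC$ provides an eventually $\leq^*$-unbounded sequence of length $\mfb$ (for instance any $\leq^*$-unbounded $\leq^*$-increasing tower of length $\mfb$ is eventually unbounded, since if $y$ bounded cofinally many of its members it would bound all of them by $\leq^*$-increasingness, contradicting unboundedness). Feeding this into Lemma \ref{eventualsequences}(2) with $\kappa = \mfb$ yields $\mfd(\leq^*_{\{\emptyset\}}) \geq \mfd^\mfc_\mfb$.

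For the reverse inequality I would invoke the hypothesis $\mfb = \mfd$ to manufacture an eventually $\leq^*$-\emph{dominating} sequence of length $\kappa = \mfb$, namely a scale. Concretely, build $\langle x_\alpha \mid \alpha < \kappa \rangle$ by transfinite recursion that is simultaneously $\leq^*$-increasing and $\leq^*$-dominating: fix in advance a dominating family $\{d_\alpha \mid \alpha < \kappa\}$ of size $\mfd = \kappa$, and at stage $\alpha < \kappa$, having chosen $\langle x_\beta \mid \beta < \alpha \rangle$, observe that this initial segment has size $<\kappa = \mfb$ and is therefore $\leq^*$-bounded; pick $x_\alpha$ above (in $\leq^*$) every $x_\beta$ for $\beta < \alpha$ and also above $d_\alpha$. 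The resulting sequence is $\leq^*$-dominating because it dominates the $d_\alpha$, and it is eventually $\leq^*$-dominating because it is $\leq^*$-increasing: given any $y \in \baire$, dominatingness furnishes $\alpha_0$ with $y \leq^* x_{\alpha_0}$, and transitivity of $\leq^*$ then gives $y \leq^* x_\alpha$ for all $\alpha \in [\alpha_0, \kappa)$. Applying Lemma \ref{eventualsequences}(1) with $\kappa = \mfb$ yields $\mfd(\leq^*_{\{\emptyset\}}) \leq \mfd^\mfc_\mfb$, and combining with the previous paragraph gives the claimed equality.

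I expect no genuine obstacle here: the content is entirely packaged in Lemma \ref{eventualsequences}, and the only thing the hypothesis $\mfb = \mfd$ is used for is the existence of the scale in the second paragraph (the $\geq$ direction being a $\ZFC$ theorem). The one point deserving a line of care is that the recursive construction of the scale never stalls, which is precisely guaranteed by the definition of $\mfb$: every subfamily of size $<\mfb$ is $\leq^*$-bounded, so a suitable $x_\alpha$ can always be found at each stage $\alpha < \kappa$.
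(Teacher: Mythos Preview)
Your argument is correct and follows the same route as the paper: invoke Lemma \ref{eventualsequences} in both directions, using (a scale as) an eventually $\leq^*$-dominating sequence for the upper bound and an eventually $\leq^*$-unbounded sequence for the lower bound. The only cosmetic difference is that the paper observes the single scale is simultaneously eventually $\leq^*$-dominating and eventually $\leq^*$-unbounded, whereas you split the two directions and handle $\geq$ via the $\ZFC$ fact recorded just before the lemma; the content is identical.
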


\begin{proof}
If $\mfb = \mfd$ there is a scale of length $\mfb$, i.e. a dominating family well-ordered by $\leq^*$ of length $\mfb$, see \cite[Theorem 2.6]{BlassHB}. Such a scale is simultaneously an eventually $\leq^*$-unbounded and an eventually $\leq^*$-dominating sequence. Hence, by combining the first and second items in Lemma \ref{eventualsequences} it follows that $\mfd(R_{\emptyset\}})$ is equal to $\mfd^\mfc_\mfb$. 
\end{proof}

It is unclear whether similar hypotheses to $\mfb = \mfd$ imply the same for the other relations. It is also unclear if the additional assumption is necessary. In every model we have computed $\mfd(R_{\{\emptyset\}})$, regardless of whether or not $\mfb = \mfd$, we have equality. We conjecture this is a $\ZFC$ phenomenon.
\begin{conjecture}
In $\ZFC$ it is provable that $\mfd^\mfc_\mfb = \mfd(\leq^*_{\{\emptyset\}})$.
\end{conjecture}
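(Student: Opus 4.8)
The plan is to isolate the one inequality that actually remains open. The reverse inequality $\mfd(\leq^*_{\{\emptyset\}}) \geq \mfd^\mfc_\mfb$ is already a theorem of $\ZFC$: since $\leq^*$ is a partial order there is (in $\ZFC$) an eventually $\leq^*$-unbounded sequence of length $\mfb$ — namely any $\leq^*$-increasing unbounded sequence $\langle b_\alpha \; | \; \alpha < \mfb\rangle$ — so the second clause of Lemma \ref{eventualsequences} applies with $\kappa = \mfb$. Moreover, when $\mfb = \mfd$ the statement is exactly Lemma \ref{scale}, because a scale is simultaneously eventually $\leq^*$-unbounded and eventually $\leq^*$-dominating of length $\mfb$. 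Thus the entire content of the conjecture is the upper bound $\mfd(\leq^*_{\{\emptyset\}}) \leq \mfd^\mfc_\mfb$ in the case $\mfb < \mfd$, precisely the case in which the first clause of Lemma \ref{eventualsequences} is unavailable: an eventually $\leq^*$-dominating sequence of length $\mfb$ would force $\mfb = \mfd$ by the first item of the Example.

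It is convenient to reformulate both sides as product cofinalities. Identifying $f \in \bbb$ with the tuple $\langle f(x) \; | \; x \in \baire\rangle$ and the relation $\leq^*_{\{\emptyset\}}$ with the pointwise (everywhere) order, one has $\mfd(\leq^*_{\{\emptyset\}}) = \cof\big(\prod_{x \in \baire}(\baire, \leq^*)\big)$, the cofinality of the $\mfc$-fold power of the directed set $(\baire, \leq^*)$; by the Fact (Proposition 12 of \cite{Br19}) the target $\mfd^\mfc_\mfb$ is likewise $\cof(\prod_{\gamma < \mfc} \mfb)$, the cofinality of the $\mfc$-fold power of the linear order $\mfb$. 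The natural strategy is to manufacture a cofinal family in the former of size $\mfd^\mfc_\mfb$ out of a cofinal family $\mathcal H \subseteq \mfb^\mfc$ together with a fixed scaffold living in $(\baire, \leq^*)$. The simplest attempt fixes the increasing unbounded sequence $\langle b_\alpha \; | \; \alpha < \mfb\rangle$ and, after enumerating $\baire = \{y_\gamma \; | \; \gamma < \mfc\}$, sends $H \in \mathcal H$ to $g_H$ with $g_H(y_\gamma) = b_{H(\gamma)}$. This is exactly the map used in the lower bound run backwards, and it fails for the same reason that the lower bound is only an inequality: because $\langle b_\alpha\rangle$ is merely unbounded and not dominating, a given value $f(y_\gamma)$ need not be $\leq^*$ any $b_\alpha$, so no choice of $H(\gamma) < \mfb$ lets $g_H(y_\gamma)$ dominate it.

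The repair must interleave the unbounded scaffold with a genuine $\leq^*$-dominating family $\{d_\xi \; | \; \xi < \mfd\}$ while keeping the index set of each coordinate of size $\mfb$, paying for the extra cofinality $\mfd \leq \mfc$ by spreading it across the $\mfc$ available coordinates rather than within a single factor. Concretely one would try to pre-assign, to each $\gamma < \mfc$, a bounded-in-$\mfb$ window of dominators so that every real occurs as the value $f(y_\gamma)$ for cofinally many $\gamma$ lying in windows that some $H \in \mathcal H$ can reach, and then run a counting/reflection argument ($\cof([\mfd]^{\leq \aleph_0})$ together with $\mfd \leq \mfc$) to keep the total number of $g_H$ at $\mfd^\mfc_\mfb$. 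The main obstacle is intrinsic and Tukey-theoretic: when $\mfb < \mfd$ there is \emph{no} monotone cofinal map $\mfb \to (\baire, \leq^*)$, since its image would be a dominating family of size $\mfb < \mfd$, so the desired reduction cannot be performed coordinatewise and must instead compare $\cof\big(\prod_{x \in \baire}(\baire, \leq^*)\big)$ with $\cof(\mfb^\mfc)$ through the global product structure. Establishing that these two product cofinalities always coincide is in effect a pcf/Tukey identity about powers of $(\baire, \leq^*)$, and I expect this identity — rather than any bookkeeping in the construction — to be the genuinely hard step; the most promising route is to combine the known Tukey classification of $(\baire, \leq^*)$ with the cofinality calculus for $\mfb^\mfc$ supplied by \cite{Br19}.
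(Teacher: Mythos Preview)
This statement is recorded in the paper as a \emph{conjecture}, not a theorem; there is no proof in the paper to compare against, and your write-up is, appropriately, a strategy discussion rather than a proof. Your analysis of what is already known is accurate and matches the paper exactly: the lower bound $\mfd(\leq^*_{\{\emptyset\}}) \geq \mfd^\mfc_\mfb$ is the remark the paper makes just before Lemma~\ref{scale} (via an eventually $\leq^*$-unbounded sequence of length $\mfb$ plugged into Lemma~\ref{eventualsequences}(2)), and the case $\mfb = \mfd$ is precisely Lemma~\ref{scale}. So you have correctly isolated the open content as the upper bound when $\mfb < \mfd$.

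What you have not mentioned, and what the paper does contain, is the partial result Theorem~\ref{mfc=mfb+n}: the conjecture holds whenever $\mfc = \mfb^{+n}$ for some finite $n$. The method there is not Tukey-theoretic but a short induction on $\kappa \in [\mfb, \mfc]$ carried out inside a model $M$ of size $\mfd^\mfc_\mfb$ that contains witnesses for every $\mfd^\mfc_\kappa$ in that interval (using that these cardinals decrease in $\kappa$). At the successor step one picks $h \in M$ totally dominating the unknown $f:\mfc \to \kappa^+$ and uses it to reindex each coordinate from $h(\gamma) < \kappa^+$ down to $\kappa$. This is a concrete implementation of exactly the ``spread the extra cofinality $\mfd \leq \mfc$ across the $\mfc$ coordinates rather than within a single factor'' idea you sketch, and it succeeds for finitely many steps. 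The coordinatewise obstruction you point out (no monotone cofinal map $\mfb \to (\baire,\leq^*)$ when $\mfb < \mfd$) is real, but the paper's argument sidesteps it at successors; the genuinely open case is the limit step $\mfc \geq \mfb^{+\omega}$, where this induction has no obvious continuation and something like the global pcf/Tukey comparison you propose would presumably be required.
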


This conjecture is true if $\mfc < \aleph_\omega$.

\begin{theorem}
If $\mfc = \mfb^{+n}$ for some $n < \omega$ then $\mfd^\mfc_\mfb = \mfd(\leq^*_{\{\emptyset\}})$.
\label{mfc=mfb+n}
\end{theorem}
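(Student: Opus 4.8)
The inequality $\mfd(\leq^*_{\{\emptyset\}}) \geq \mfd^\mfc_\mfb$ is already available in $\ZFC$: as noted just before Lemma \ref{scale}, the canonical $\leq^*$-increasing unbounded sequence of length $\mfb$ is eventually $\leq^*$-unbounded, so the second clause of Lemma \ref{eventualsequences} applies. Thus the entire content of the theorem is the reverse inequality $\mfd(\leq^*_{\{\emptyset\}}) \leq \mfd^\mfc_\mfb$, and the plan is to prove this by induction on $n$. The base case $n=0$ is immediate: then $\mfb \leq \mfd \leq \mfc = \mfb$ forces $\mfb = \mfd$, so a scale exists and Lemma \ref{scale} gives the equality directly. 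For the inductive step one may assume $\mfb < \mfd = \mfb^{+m}$ for some $1 \leq m \leq n$; the obstruction is precisely that in this regime there is no $\leq^*$-scale, so the first clause of Lemma \ref{eventualsequences} — which would hand us the bound for free — is unavailable.

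The strategy for the upper bound is to reduce the cofinality $\mfd$ of the codomain $(\baire, \leq^*)$ down toward $\mfb$. First I would fix a $\leq^*$-directed dominating family $D = \{d_\eta \mid \eta < \mfd\}$, and exploit the two features that make $\mfb$ the relevant index: any subset of $\baire$ of size $<\mfb$ has a single $\leq^*$-upper bound, hence lies below a single $d_\eta$; and for reals $\{r_i \mid i < \theta\}$ with $\theta < \mfb$ the cones $U_{r_i} = \{\eta \mid r_i \leq^* d_\eta\}$ meet in a cone $U_{r^*}$ of the same shape. I would then slice the index set $\baire$ (of size $\mfc$) into $\mfc$ pieces each of size $\leq \mfb^{+(n-1)}$, dominate each restricted instance using the inductive hypothesis, and reassemble. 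The combinatorial engine controlling the number of reassemblies is a covering number of the form $\cof([\mfc]^{<\mfb})$ (and its analogues at the intermediate stages): the role of the hypothesis $\mfc = \mfb^{+n}$ with $n$ finite is exactly that these equal $\mfc$ — the generalization to $\mfb$ of the $\ZFC$ fact $\cof([\lambda]^{\leq \aleph_0}) = \lambda$ for the $\aleph_n$ recorded in the footnote above — and are therefore absorbed into $\mfd^\mfc_\mfb$, since $\mfd^\mfc_\mfb \geq \mfc$.

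The main obstacle is the failure of monotonicity: choosing, coordinate by coordinate, a $\leq^*$-dominator of $f(x)$ cannot be parametrized by a single linearly ordered index unless a scale is present, and the naive reassembly of piecewise dominators ranges over a full product of size $(\mfd^{\mfb^{+(n-1)}}_\mfb)^{\mfc}$, which is far larger than $\mfd^\mfc_\mfb$. The heart of the argument is therefore to organize the piecewise dominating families into $\leq^*$-directed posets whose \emph{cofinal-set-hitting} number is exactly $\mfd^\mfc_\mfb$; this is where the finiteness of $n$ is essential, as the required covering computations break down at $\aleph_\omega$, which is also why the general conjecture remains open and the hypothesis $\mfc < \aleph_\omega$ appears. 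A cleaner packaging of the same idea would route the upper bound through the structural results on the cardinals $\mfd^\lambda_\kappa$ from \cite{Br19}, establishing directly that $\mfd^\mfc_\mfb = \mfd^\mfc_\mfd$ under $\mfc = \mfb^{+n}$ and combining this with a piecewise-domination argument; the finite-successor hypothesis would then enter at the identical covering-number step.
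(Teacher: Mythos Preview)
Your proposal correctly identifies that only the inequality $\mfd(\leq^*_{\{\emptyset\}}) \leq \mfd^\mfc_\mfb$ needs work, and your instinct to induct through the finitely many cardinals between $\mfb$ and $\mfc$ is right. But the proposal has a genuine gap: you induct on $n$ by slicing the \emph{domain} $\baire$ into pieces of size $\mfb^{+(n-1)}$, and you yourself flag the resulting reassembly problem --- combining the piecewise dominators costs a full product, not $\mfd^\mfc_\mfb$. You describe what a solution would have to accomplish (``organize the piecewise dominating families into $\leq^*$-directed posets whose cofinal-set-hitting number is exactly $\mfd^\mfc_\mfb$'') without actually producing one, and the covering-number heuristic $\cof([\mfc]^{<\mfb}) = \mfc$ does not by itself close this gap. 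Also note that ``induction on $n$'' is ill-posed as stated, since $\mfb$ and $\mfc$ are fixed; you would need to reformulate the inductive statement internally.

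The paper's argument avoids the reassembly problem entirely by inducting on the \emph{range} rather than the domain. Fix an elementary submodel $M$ of size $\mfd^\mfc_\mfb$; by the monotonicity $\mfd^\mfc_{\kappa} \leq \mfd^\mfc_\mfb$ for $\mfb \leq \kappa \leq \mfc$, one may assume $M$ contains witnesses for all of these. The inductive statement $(*)_\kappa$ is: for any list $\{x_{\gamma,\alpha} : \gamma < \mfc,\ \alpha < \kappa\} \in M$ and any (external) $f:\mfc \to \kappa$, some $g \in M$ satisfies $x_{\gamma,f(\gamma)} \leq^* g(y_\gamma)$ for all $\gamma$. The step from $\kappa$ to $\kappa^+$ is a \emph{reindexing trick}: dominate the external $f:\mfc\to\kappa^+$ everywhere by some $h \in M$ (using the witness for $\mfd^\mfc_{\kappa^+}$), then, since each $h(\gamma) < \kappa^+$, reindex inside $M$ so that the $\gamma$-th column has length $\kappa$ instead of $\kappa^+$, and apply $(*)_\kappa$. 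No gluing of piecewise families is needed: the single dominating $h \in M$ does all the work at once. This is the missing idea in your sketch.
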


\begin{proof}
Let $M$ be a model of a large enough fragment of $\ZFC$ of size $\mfd^\mfc_\mfb$ containing a witness for $\mfd^\mfc_\mfb$. We show that $M$ contains a witness for $\mfd(R_{\{\emptyset\}})$ as well. Since the $\mfd^\mfc_\kappa$ are decreasing in $\kappa$ (at least at successor steps), see \cite[Theorem 13]{Br19}, we may assume that $M$ also contains a witness for each $\mfd^\mfc_\kappa$ for $\mfb \leq \kappa \leq \mfc$. Note that $\baire \subseteq M$. Let $\{y_\gamma \; | \; \gamma < \mfc\} \in M$ enumerate $\baire$. By induction on $\kappa$ we show the following:

\begin{center}
$(*)$: For any $\kappa \in [\mfb, \mfc]$ if $\{x_{\gamma, \alpha} \; | \; \gamma < \mfc, \alpha < \kappa\} \in M$ is a list of reals and $f:\mfc \to \kappa$ is arbitrary, then there is a $g:\baire \to \baire$ in $M$ so that for a $\gamma < \mfc$, $x_{\gamma, f(\gamma)} \leq^* g(y_\gamma)$. 
\end{center}

This is clearly true for $\kappa = \mfb$. Furthermore if $(*)$ holds for $\kappa = \mfc$ we're done. Therefore assume $(*)$ holds for $\kappa$ and we will show it holds for $\kappa^+$. Let $\{x_{\gamma, \alpha} \; | \; \gamma < \mfc, \alpha < \kappa^+\} \in M$ be a list of reals. Let $f:\mfc \to \kappa^+$ be arbitrary and let $h:\mfc \to \kappa^+$ in $M$ dominate $f$ everywhere. Reindex the list in $M$ so that for each $\gamma$ we have $\{x'_{\gamma, \alpha} \; | \; \alpha < \kappa\} = \{x_{\gamma, \alpha}\; | \; \alpha < h(\gamma)\}$. Applying the induction hypothesis to $\{x'_{\gamma, \alpha} \; | \; \gamma < \mfc, \alpha < \kappa\} \in M$ and $f':\mfc \to \kappa$ given by $x'_{\gamma, f'(\gamma)} = x_{\gamma, f(\gamma)}$ gives an $g \in M$ so that for all $\gamma < \mfc$, $x'_{\gamma, f'(\gamma)} = x_{\gamma, f(\gamma)} \leq^* g(y_\gamma)$ as needed.
\end{proof}

\section{Consistency Results for the Dominating Numbers}
In this section we consider consistency results for the cardinals of the form $\mfd(R_\mathcal I)$. As shown in Theorem \ref{idealdoesntmatter} the ideal here is unimportant so we really only have three cardinals, $\mfd(\in^*_{\{\emptyset\}})$, $\mfd(\leq^*_{\{\emptyset\}})$, $\mfd(\neq^*_{\{\emptyset\}})$. Nevertheless we can separate them all.

\begin{theorem}
The following constellations of cardinals are all consistent.
\begin{enumerate}
\item
$\mfd(\neq^*_{\{\emptyset\}}) < \mfd(\leq^*_{\{\emptyset\}}) = \mfd(\in^*_{\{\emptyset\}})$ %%randoms
\item
$\mfd(\neq^*_{\{\emptyset\}}) = \mfd(\leq^*_{\{\emptyset\}}) < \mfd(\in^*_{\{\emptyset\}})$  %%Hechler
\item
$\mfd(\neq^*_{\{\emptyset\}})< \mfd(\leq^*_{\{\emptyset\}}) < \mfd(\in^*_{\{\emptyset\}})$ 
\item
$\mfd(\neq^*_{\{\emptyset\}}) = \mfd(\leq^*_{\{\emptyset\}}) = \mfd(\in^*_{\{\emptyset\}}) < 2^{\mfc}$ 
\item
$\mfc^+ < \mfd(\neq^*_{\{\emptyset\}}) = \mfd(\leq^*_{\{\emptyset\}}) = \mfd(\in^*_{\{\emptyset\}})$ 
\end{enumerate}
\label{dcardinals}
\end{theorem}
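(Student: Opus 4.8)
The plan is to reduce all five statements to configurations of the generalized dominating numbers $\mfd^\mfc_\kappa$ of \cite{Br19}, using the results of Section 5. By Theorem \ref{idealdoesntmatter} we may work throughout with the trivial ideal, and the arrows $\mfd(\neq^*)\to\mfd(\leq^*)\to\mfd(\in^*)$ of Figure \ref{figurenull} give the $\ZFC$ chain $\mfd(\neq^*_{\{\emptyset\}})\le\mfd(\leq^*_{\{\emptyset\}})\le\mfd(\in^*_{\{\emptyset\}})$, all three above $\mfc$ by Corollary \ref{lemmamfd}. So each of (1)--(5) asks us to insert prescribed equalities and strict gaps into this chain (and, for (4),(5), to situate the common value between $\mfc^+$ and $2^\mfc$). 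The guiding identity is $\mfd(R_{\{\emptyset\}})=\mfd^\mfc_{\mfb(R)}$, where ${\rm add}(\Null)=\mfb(\in^*)\le\mfb=\mfb(\leq^*)\le{\rm non}(\Me)=\mfb(\neq^*)$; since $\mfd^\mfc_\kappa$ is nonincreasing in $\kappa$ by \cite[Theorem 13]{Br19} this reproduces the $\ZFC$ order, and all the separations reduce to forcing $\mfd^\mfc_\kappa$ to drop, or stay constant, across the three points $\kappa={\rm add}(\Null),\mfb,{\rm non}(\Me)$.

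Concretely I would supply the bounds through Lemma \ref{eventualsequences}: an eventually $R$-dominating sequence of length $\kappa$ gives $\mfd(R_{\{\emptyset\}})\le\mfd^\mfc_\kappa$, and an eventually $R$-unbounded sequence of length $\kappa$ gives $\mfd(R_{\{\emptyset\}})\ge\mfd^\mfc_\kappa$. For a cardinal meant to be \emph{strictly smaller} only the upper bound is needed (e.g.\ an eventually $\neq^*$-dominating sequence of length ${\rm non}(\Me)$ gives $\mfd(\neq^*_{\{\emptyset\}})\le\mfd^\mfc_{{\rm non}(\Me)}$, read off from cofinally added eventually different reals). For cardinals meant to be \emph{equal} I would arrange $\mfb(R)=\mfd(R)$ for those $R$, so that Lemma \ref{eventualsequences} pins them on both sides: for $\leq^*$ this is Lemma \ref{scale} (a scale when $\mfb=\mfd$), for $\in^*$ it is the two-sided pin when ${\rm add}(\Null)={\rm cov}(\Null)$, and for $\neq^*$ when ${\rm non}(\Me)={\rm cov}(\Me)$. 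This yields the template: (1) ${\rm add}(\Null)={\rm cov}(\Null)=\mfb=\mfd<{\rm non}(\Me)$ with $\mfd^\mfc_{{\rm non}(\Me)}<\mfd^\mfc_\mfb$; (2) ${\rm add}(\Null)<\mfb=\mfd={\rm non}(\Me)={\rm cov}(\Me)$ with $\mfd^\mfc_\mfb<\mfd^\mfc_{{\rm add}(\Null)}$; (3) ${\rm add}(\Null)<\mfb=\mfd<{\rm non}(\Me)$ with $\mfd^\mfc_{{\rm non}(\Me)}<\mfd^\mfc_\mfb<\mfd^\mfc_{{\rm add}(\Null)}$.

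For (4) and (5) I would collapse all three subscripts. Forcing $\MA$ makes ${\rm add}(\Null)=\mfc$, hence every Cicho\'n entry and every $\mfb(R)=\mfd(R)$ equal $\mfc$; scales together with the usual $\MA$ recursion then produce eventually $R$-dominating and $R$-unbounded sequences of length $\mfc$ for all three $R$, so that all three cardinals equal $\mfd^\mfc_\mfc=\mfd_\mfc$. I would then follow this with a ${<}\mfc$-closed forcing on $\mfc^\mfc$, which adds no reals and hence preserves $\MA$, the whole Cicho\'n diagram, and the eventual sequences already built: over a model in which $2^\mfc$ has first been blown up, a ${<}\mfc$-support iteration of generalized Hechler forcing at $\mfc$ of length $\mfc^+$ yields $\mfd_\mfc=\mfc^+<2^\mfc$, proving (4), while taking the length of that iteration above $\mfc^+$ yields $\mfd_\mfc>\mfc^+$, proving (5).

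The real work, and the main obstacle, is the coordination needed for (1)--(3): one must build a single model that at the $\omega$-level pins ${\rm add}(\Null),\mfb,{\rm non}(\Me)$ (and the matching covering numbers) in the prescribed pattern by a ccc construction, that carries the eventually $R$-dominating/unbounded sequences of exactly the lengths demanded above, and that at the $\mfc$-level forces $\mfd^\mfc_\kappa$ to take strictly separated values at $\kappa={\rm add}(\Null),\mfb,{\rm non}(\Me)$. This last point is the genuinely new ingredient and is where the $\mfd^\lambda_\kappa$-technology of \cite{Br19} must be invoked; the saving grace is that the higher forcing separating the $\mfd^\mfc_\kappa$ can be taken ${<}\mfc$-closed, so it adds no reals and leaves the delicately arranged $\omega$-level diagram — and all the eventual sequences witnessing the identifications — intact.
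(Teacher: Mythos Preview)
Your overarching plan --- reduce everything to the numbers $\mfd^\mfc_\kappa$ via Lemma~\ref{eventualsequences}, arrange the Cicho\'n diagram at the $\omega$-level by a ccc step, and separate the $\mfd^\mfc_\kappa$ by a closed step --- is the paper's route as well, and the paper's models for (1)--(3) are precisely ``Cohen subsets of $\omega_1$ (and, for (3), of $\omega_2$) followed by random, Hechler, or a mixture''. Your handling of (4) and (5) via $\MA$ plus ${<}\mfc$-closed forcing on $\mfc^\mfc$ is a legitimate alternative to the paper's appeal to Theorem~\ref{CHtheorem2}; in fact Theorem~\ref{CHtheorem2} applies verbatim under $\MA$, so this comes to the same thing.

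There is, however, a genuine gap in your mechanism for \emph{lower} bounds, and it bites precisely in items (2) and (3). You write that the two-sided pin for $\in^*$ obtains when ``${\rm add}(\Null)={\rm cov}(\Null)$'', but $\mfd(\in^*)={\rm cof}(\Null)$, not ${\rm cov}(\Null)$; an eventually $\in^*$-dominating sequence of length $\kappa$ forces ${\rm cof}(\Null)\le\kappa\le{\rm add}(\Null)$, collapsing the whole Cicho\'n diagram --- exactly what (2) and (3) forbid. Even retreating to the one-sided lower bound, you would need an eventually $\in^*$-unbounded sequence of length ${\rm add}(\Null)=\aleph_1$, and this simply does not exist in your template: in the Hechler model ${\rm non}(\Null)=\mfc>\aleph_1$, so every $\aleph_1$-set of reals is null, hence covered by countably many slalom-capture sets, hence some single slalom captures uncountably many of them. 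No $\omega_1$-sequence can be eventually $\in^*$-unbounded there, and Lemma~\ref{eventualsequences} gives you nothing for $\mfd(\in^*_{\{\emptyset\}})$ beyond the trivial $\mfd(\in^*_{\{\emptyset\}})\ge\mfd(\leq^*_{\{\emptyset\}})$.

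The paper does not attempt this route. Instead it proves a direct lower bound (Lemma~\ref{productford}): whenever the ccc step leaves $\mfb(R)=\aleph_1$, one passes to an intermediate model $W$ containing a putative $\aleph_3$-sized dominating family and the ccc generic, observes that the remainder over $W$ is still ${\rm add}(\omega_1,\omega_4)$, and uses $\omega_2$ of the generic functions $\omega_1\to\omega_1$ it supplies to build, by a density argument, a single $g\in\bbb$ that none of the $\aleph_3$ functions $R_{\{\emptyset\}}$-dominate. That lemma --- not an eventual sequence --- is what pushes $\mfd(\in^*_{\{\emptyset\}})$ strictly above $\mfd(\leq^*_{\{\emptyset\}})$ in (2) and (3), and it is the missing ingredient in your outline.
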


The models witnessing Theorem \ref{dcardinals} are discussed in the proofs of Lemmas \ref{randomdcardinal}, \ref{hechlerdcardinal} and \ref{mixdcardinal} below. The general form of proving these consistency results involves first adding Cohen subsets to $\omega_1$ and then adding various kinds of reals. The point will be that all of the $\mfd(R_{\{\emptyset\}})$ cardinals will reduce to combinatorics on some space of the form $\kappa^\mfc$ but depending on how many subsets of $\omega$ and $\omega_1$ we add, as well as which type of reals we add, $\kappa$ may change depending on the relation $R$. Towards proving these results, we begin with a simple, well known lemma.

\begin{lemma}
Assume $\GCH$ and let $\kappa > \aleph_3$ be a regular cardinal. Let $G \subseteq add(\omega_1, \kappa)$ be generic over $V$. Then in $V[G]$ we have that $\mfd_{\omega_1} = \kappa$ and $\mfd_{\omega_2} = \aleph_3$.
\label{cohensubsets}
\end{lemma}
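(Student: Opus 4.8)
The plan is to compute the two generalized dominating numbers $\mfd_{\omega_1}$ and $\mfd_{\omega_2}$ directly in the forcing extension $V[G]$, where $G$ is generic for $\mathrm{add}(\omega_1,\kappa)$, the forcing to add $\kappa$-many Cohen subsets of $\omega_1$ (with conditions of size ${<}\omega_1$, i.e. countable partial functions). Recall that $\mfd_\lambda$ is the dominating number for $\leq^*$ (mod ${<}\lambda$ bounded) on $\lambda^\lambda$. First I would establish the relevant combinatorial properties of this forcing: under $\GCH$ the poset $\mathrm{add}(\omega_1,\kappa)$ is ${<}\omega_1$-closed (hence adds no new reals and no new $\omega$-sequences, and computes $\mfd_{\omega_2}$ essentially as in the ground model up to cardinal arithmetic) and is $\aleph_2$-c.c.\ (since $\GCH$ gives a $\Delta$-system argument with countable conditions), so all cardinals and cofinalities are preserved. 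I would also record the resulting cardinal arithmetic in $V[G]$: because we are adding $\kappa$ subsets of $\omega_1$, we get $2^{\aleph_1} = \kappa$ while $2^{\aleph_0} = \aleph_1$ and $2^{\aleph_2}$ stays at $\aleph_3$ (using $\kappa > \aleph_3$ regular and $\GCH$ below).

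The heart of the argument is the computation $\mfd_{\omega_1} = \kappa$. For the lower bound I would argue that any family of size ${<}\kappa$ of functions in $\omega_1^{\omega_1}$ cannot be dominating: by the $\aleph_2$-c.c.\ and ${<}\omega_1$-closure, a family of fewer than $\kappa$ such functions is captured by a subforcing $\mathrm{add}(\omega_1,X)$ for some $X\subseteq\kappa$ with $|X|<\kappa$, and then a generic Cohen subset of $\omega_1$ added by a coordinate outside $X$ yields (via mutual genericity) a function in $\omega_1^{\omega_1}$ that is $\leq^*$-unbounded over the family — this is the familiar genericity escape, the direct analogue of how Cohen reals witness $\mfd \geq \kappa$ on $\omega$. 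For the upper bound $\mfd_{\omega_1}\leq\kappa$, since $2^{\aleph_1}=\kappa=\kappa^{\aleph_1}$ in $V[G]$, the full set $\omega_1^{\omega_1}$ has size $\kappa$ and is trivially dominating, giving $\mfd_{\omega_1}=\kappa$.

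The computation $\mfd_{\omega_2} = \aleph_3$ is where I expect the main subtlety, and it is what forces the hypothesis $\kappa>\aleph_3$. The upper bound $\mfd_{\omega_2}\leq\aleph_3$ follows because $2^{\aleph_2}=\aleph_3$ is unchanged (the forcing, being $\aleph_2$-c.c.\ and only adding subsets of $\omega_1$, does not blow up $2^{\aleph_2}$ past $\GCH$'s value), so $\omega_2^{\omega_2}$ has size $\aleph_3$ and dominates. For the lower bound $\mfd_{\omega_2}\geq\aleph_3$ I would invoke a $\ZFC$ fact about these generalized dominating numbers, namely that $\mfd_\lambda > 2^\lambda$ is impossible while $\mfd_\lambda \geq \lambda^+$ always holds, and more precisely that the diagonalization argument showing $\mfd_\lambda > \lambda$ (analogous to Corollary \ref{lemmamfd}'s diagonal argument) combined with the fact that no family of size $\aleph_2$ can dominate in $\omega_2^{\omega_2}$ when $2^{\aleph_2}=\aleph_3$. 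The key point here is that $\mathrm{add}(\omega_1,\kappa)$ does not add dominating functions at the level of $\omega_2$ — being ${<}\omega_1$-closed and $\aleph_2$-c.c., it adds no new cofinal branches or scales on $\omega_2^{\omega_2}$ that would collapse $\mfd_{\omega_2}$ below $\aleph_3$ — so $\mfd_{\omega_2}$ retains its $\GCH$ value of $\aleph_3$. The hard part will be verifying cleanly that adding Cohen \emph{subsets of $\omega_1$} genuinely drives $\mfd_{\omega_1}$ all the way up to $\kappa$ (the mutual-genericity escape argument at level $\omega_1$) while leaving $\mfd_{\omega_2}$ pinned at $\aleph_3$; this asymmetry is precisely the content of the lemma and is what makes it useful for separating the dominating numbers in the subsequent theorems.
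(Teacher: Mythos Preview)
Your argument for $\mfd_{\omega_1} = \kappa$ is correct and is exactly the standard argument the paper alludes to as ``well known.''

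However, your argument for the upper bound $\mfd_{\omega_2} \leq \aleph_3$ contains a genuine error. You claim that $2^{\aleph_2} = \aleph_3$ is preserved in $V[G]$, but this is false: since you have just added $\kappa$ new subsets of $\omega_1$, and every subset of $\omega_1$ is trivially a subset of $\omega_2$, you have $2^{\aleph_2} \geq 2^{\aleph_1} = \kappa > \aleph_3$ in $V[G]$. So the set $\omega_2^{\omega_2}$ in $V[G]$ has size at least $\kappa$, and the cardinality argument you give does not yield the bound.

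The paper's proof supplies exactly the missing idea: by the $\aleph_2$-c.c., the forcing is $\aleph_2^{\aleph_2}$-bounding. Concretely, if $\dot f$ names a function $\omega_2 \to \omega_2$, then for each $\alpha < \omega_2$ the set of possible values of $\dot f(\alpha)$ has size at most $\aleph_1$, so one can define $g \in V$ by $g(\alpha) = \sup$ of those values; then $g$ dominates $\dot f$ everywhere. Hence the \emph{ground model} functions $\omega_2 \to \omega_2$ form a dominating family in $V[G]$, and by $\GCH$ in $V$ there are only $\aleph_3$ of them. Your lower bound $\mfd_{\omega_2} \geq \aleph_3$ via the $\ZFC$ diagonalization $\mfd_\lambda > \lambda$ is fine; it is only the upper bound that needs this correction.
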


\begin{proof}
The fact that $\mfd_{\omega_1} = \kappa$ is well known, see for example \cite{cichonunctble}. For $\mfd_{\omega_2}$ it suffices to note that, by the $\aleph_2$-c.c., the forcing is $\aleph_2^{\aleph_2}$-bounding and hence the ground model functions $f:\aleph_2 \to \aleph_2$ form a dominating family.
\end{proof}

We also will use the following result several times.

\begin{lemma}
Assume $\GCH$. Fix $R \in \{\neq^*, \leq^*, \in^*\}$. Let $G\subseteq add(\omega_1, \omega_4)$ be generic over $V$. In $V[G]$ let $\mathbb Q \in V$ be ccc (in $V[G]$) and suppose $\forces_\mathbb Q$ ``$\mfb(R) = \aleph_1$ and $2^{\aleph_0} = \aleph_2$". Let $H \subseteq \mathbb Q$ be generic over $V[G]$. Then in $V[G][H]$ we have that $\mfd(R_{\{\emptyset\}}) = \aleph_4$.
\label{productford}
\end{lemma}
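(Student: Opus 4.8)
The plan is to sandwich $\mfd(R_{\{\emptyset\}})$ between $\aleph_4$ and $\aleph_4$ by identifying it with $\mfd^{\mfc}_{\aleph_1}=\mfd^{\aleph_2}_{\aleph_1}$. I would first record the cardinal arithmetic of $V[G][H]$. Since $\mathbb Q\in V$, the two-step forcing is really the product $add(\omega_1,\omega_4)\times\mathbb Q$, so $V[G][H]=V[H][G]$ and I may compute in whichever order is convenient. Forcing $add(\omega_1,\omega_4)$ over $V\models\GCH$ is $\sigma$-closed (hence adds no reals and leaves $\CH$ true in $V[G]$) and, as $2^{\aleph_0}=\aleph_1$ there, $\aleph_2$-c.c., so it preserves cardinals and a nice-name count gives $2^{\aleph_1}=2^{\aleph_2}=\aleph_4$ in $V[G]$; then $\mathbb Q$ is c.c.c.\ over $V[G]$, so all cardinals are preserved, $\mfc=\aleph_2$, and $2^{\aleph_1}=2^{\aleph_2}=\aleph_4$ persist. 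In particular $|\bbb|=\mfc^{\mfc}=2^{\mfc}=\aleph_4$, and since $\bbb$ (resp.\ $\mathcal S^{\baire}$) is trivially $R_{\{\emptyset\}}$-dominating — every $g$ has a pointwise $R$-bound — this already gives the upper bound $\mfd(R_{\{\emptyset\}})\le\aleph_4$.

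For the lower bound I would invoke Lemma \ref{eventualsequences}(2): an eventually $R$-unbounded sequence of length $\aleph_1$ yields $\mfd(R_{\{\emptyset\}})\ge\mfd^{\mfc}_{\aleph_1}=\mfd^{\aleph_2}_{\aleph_1}$. It then suffices to prove $\mfd^{\aleph_2}_{\aleph_1}\ge\aleph_4$, which I would reduce to $\mfd_{\aleph_1}=\aleph_4$ via monotonicity of $\mfd^{\lambda}_{\aleph_1}$ in $\lambda$ (restricting a dominating family in $\aleph_1^{\aleph_2}$ to the first $\aleph_1$ coordinates gives one in $\aleph_1^{\aleph_1}$), so that $\mfd^{\aleph_2}_{\aleph_1}\ge\mfd^{\aleph_1}_{\aleph_1}=\mfd_{\aleph_1}$. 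Lemma \ref{cohensubsets} (with $\kappa=\omega_4>\aleph_3$) gives $\mfd_{\omega_1}=\aleph_4$ in $V[G]$, and I would check this survives the c.c.c.\ forcing $\mathbb Q$: every $f\colon\omega_1\to\omega_1$ in $V[G][H]$ is dominated everywhere by the $V[G]$-function sending $\alpha$ to the supremum of the countably many values a maximal antichain forces on $f(\alpha)$, so a $V[G]$-dominating family stays dominating ($\le\aleph_4$), while by c.c.c.\ any family of $\aleph_3$ functions from $V[G]$ is covered by a $V[G]$-set of size $\aleph_3$, so a hypothetical small new dominating family pushes down to one in $V[G]$, contradicting $\mfd_{\omega_1}^{V[G]}=\aleph_4$ ($\ge\aleph_4$). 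Hence $\mfd^{\aleph_2}_{\aleph_1}=\aleph_4$ and $\mfd(R_{\{\emptyset\}})\ge\aleph_4$.

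The hard part, and the one place the full strength of the hypothesis is used, is producing the eventually $R$-unbounded sequence of length \emph{exactly} $\aleph_1$ in $V[G][H]$. The length is forced on us: since $\mfd_{\aleph_2}=\aleph_3$ here (again by Lemma \ref{cohensubsets}), a sequence of length $\aleph_2$ would only deliver the useless bound $\mfd^{\aleph_2}_{\aleph_2}=\mfd_{\aleph_2}=\aleph_3$, so the subscript must be $\mfb(R)=\aleph_1$. For $R=\leq^*$ the sequence is free in $\ZFC$: a $\le^*$-increasing, $\le^*$-unbounded tower of length $\mfb=\aleph_1$ is automatically eventually unbounded by transitivity of $\le^*$. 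For $R\in\{\neq^*,\in^*\}$, however, the equality $\mfb(R)=\aleph_1$ does \emph{not} suffice in $\ZFC$ — this is exactly the Cohen/random dichotomy recorded in the Examples before Lemma \ref{eventualsequences}, where eventually $\neq^*$-unbounded sequences exist in one model and not the other — so here one must use that $\mfb(R)=\aleph_1$ is \emph{forced by} the ground-model c.c.c.\ poset $\mathbb Q$. Concretely, I would extract the sequence from the reals that $\mathbb Q$ adds to witness $\mfb(R)=\aleph_1$, arguing that an appropriate $\aleph_1$-indexed subfamily of them forms a non-meager/Luzin-type tower (for $\in^*$, the slalom-escaping analogue) that $R$-avoids every real, respectively every slalom, past some countable ordinal. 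Verifying that the generic reals added by $\mathbb Q$ give genuinely \emph{eventual}, and not merely cofinal, unboundedness is the technical heart; the $\sigma$-closed factor $add(\omega_1,\omega_4)$ contributes nothing here (it adds no reals) beyond having inflated $\mfd_{\omega_1}$ to $\aleph_4$.

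Finally, although the statement is phrased for $R_{\{\emptyset\}}$, by Theorem \ref{idealdoesntmatter} the value is independent of the ideal, so the computation transfers verbatim to $\mfd(R_\Null)=\mfd(R_\Me)=\mfd(R_\Kb)=\aleph_4$, which is the form in which the lemma will be applied.
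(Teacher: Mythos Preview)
Your upper bound and cardinal-arithmetic computations are fine, and routing the lower bound through Lemma \ref{eventualsequences}(2) and $\mfd^{\aleph_2}_{\aleph_1}\ge\mfd_{\aleph_1}=\aleph_4$ is a reasonable plan. The gap is exactly where you say the ``technical heart'' lies: you need an eventually $R$-unbounded sequence of length $\aleph_1$, and for $R\in\{\neq^*,\in^*\}$ you give no argument, only the hope that one can ``extract the sequence from the reals that $\mathbb Q$ adds''. Nothing in the hypotheses (ccc, $\mathbb Q\in V$, $\forces_{\mathbb Q}\mfb(R)=\aleph_1$) tells you which reals $\mathbb Q$ adds or that any $\omega_1$-indexed family of them is \emph{eventually} rather than merely cofinally unbounded; an eventually $\neq^*$-unbounded $\omega_1$-sequence is essentially a Luzin-type object, and these are not known to follow from ${\rm non}(\Me)=\aleph_1$ alone. (Your appeal to the Cohen/random dichotomy is also misplaced: in the random model $\mfb(\neq^*)=\mfc$, not $\aleph_1$, so it is not a counterexample to the implication you would need.) As written, the proof is incomplete precisely at the step that carries all the weight.

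The paper avoids this problem entirely by a direct generic diagonalization that needs only an \emph{ordinary} $R$-unbounded family of size $\aleph_1$, which is exactly what $\mfb(R)=\aleph_1$ gives. Given a putative dominating family $\{f_\alpha:\alpha<\omega_3\}$, one uses the product structure to pass to an intermediate model $W\supseteq V[H]$ containing all the $f_\alpha$ and an $R$-unbounded set $\{x_\xi:\xi<\omega_1\}$; the quotient forcing over $W$ is still (isomorphic to) $add(\omega_1,\omega_4)$, and $\omega_2$ of its generic functions can be spliced into a single generic $g:\omega_2\to\omega_1$. Setting $g'(y_\gamma)=x_{g(\gamma)}$ for an enumeration $\{y_\gamma:\gamma<\omega_2\}$ of the reals, a one-line density argument shows no $f_\alpha$ $R_{\{\emptyset\}}$-bounds $g'$: below any condition one can choose an undetermined coordinate $\gamma$ and force $g(\gamma)$ to hit some $\xi$ with $\neg\big(x_\xi\,R\,f_\alpha(y_\gamma)\big)$. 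The genericity of $g$ does the work you were trying to load onto an eventually unbounded tower.
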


\begin{proof}
First work in $V[G][H]$ and note that the cardinal arithmetic in this model is $2^{\aleph_0} = \aleph_2$ and $2^{\aleph_1} = 2^{\aleph_2} = 2^{\aleph_3} = \aleph_4$ and by Lemma \ref{cohensubsets} $,\mfd_{\aleph_1} = \aleph_4$ and $\mfd_{\aleph_2} = \aleph_3$. It follows that every $\mfd(R_{\{\emptyset\}})$ is either $\aleph_3$ or $\aleph_4$ in light of Corollary \ref{lemmamfd}. Therefore to prove the lemma it suffices to show that $\mfd(R_{\{\emptyset\}}) > \aleph_3$. 

Suppose towards a contradiction that $\{\dot{f}_\alpha \;  |\; \alpha < \omega_3\}$ names a $R_{\{\emptyset\}}$-dominating family in $V[G][H]$. Using the product lemma, plus properness, we can find an intermediate extension containing all of the $\dot{f}_\alpha$'s and $H$. Call this $W\subseteq V[G][H]$. By the ccc of $\mathbb Q$, the remainder forcing $\mathbb R \in W$ is $\omega$-distributive and, in fact, is isomorphic to $add(\omega_1, \omega_4)^V$. Moreover we can assume that $W \models \mfb(R) = \aleph_1$ since this is true in the final model so we can add the unbounded family. Fix an $\aleph_1$-sized $R$-unbounded family $\{x_\alpha \; | \; \alpha < \omega_1\}$. Also, enumerate all of the reals as $\{y_\alpha \; | \; \alpha < \omega_2\}$. Let $\{\dot{g}_\alpha \; | \; \alpha < \omega_2\}$ enumerate $\mathbb R$-names for $\omega_2$ many generic functions $\omega_1 \to \omega_1$. Via a bijection from $\omega_2 \times \omega_1$ to $\omega_2$, we can think of this as one function from $\omega_2$ to $\omega_1$. Let's name this function $\dot{g}$. Consider now a name $\dot{g} '$ for the function $g':\baire \to \baire$ in $V[G][H]$ so that $\dot g'(y_\alpha) = x_\beta$ if and only if $\dot{g}(\alpha) = \beta$. The following claim leads to the contradiction which completes the proof.

\begin{claim}
In $W$ we have $\forces_\mathbb R$ ``$\dot{g}'$ is not $R_{\{\emptyset\}}$-dominated by the $\dot{f}_\alpha$'s.
\end{claim}

\begin{proof}[Proof of Claim]
Fix $\alpha < \omega_3$, let $p \in \mathbb R$ and let $\gamma < \omega_2$ be so that $p$ does not decide $\dot{g}(\gamma)$. Now we can simply extend $p$ to some $q$ which forces $\dot{g} (\gamma) = \beta$ so that $x_\beta$ is not $R$-below $\dot{f}_\alpha(y_\gamma)$ (since $\{x_\alpha \; | \; \alpha < \omega_1\}$ is an unbounded family). But then $q$ forces that $\dot{f}_\alpha$ is not an $R_{\{\emptyset\}}$-bound on $\dot{g}'$ as desired.
\end{proof}

\end{proof}

We now begin with the first model towards proving Theorem \ref{dcardinals}.

\begin{lemma}
Assume $\GCH$. Let $G \subseteq add(\omega_1, \omega_4)$ be generic over $V$. Let $H$ be generic for $\mathbb B_{\omega_2}$ over $V[G]$. Then in $V[G][H]$ we have:
\begin{enumerate}
\item
$\mfd(\neq^*_{\{\emptyset\}}) = \aleph_3$.
\item
$\mfd(\leq^*_{\{\emptyset\}}) = \mfd(\in^*_{\{\emptyset\}}) = \aleph_4$.
\end{enumerate}
\label{randomdcardinal}
\end{lemma}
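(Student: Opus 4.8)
The plan is to split the three cardinals according to the single distinction that matters here, namely the value of $\mfb(R)$ in the final model. First I would record the combinatorics of $V[G][H]$. Since $add(\omega_1,\omega_4)$ is countably closed it adds no reals, so $V[G]\models\CH$ and the random algebra $\mathbb B_{\omega_2}$ acts over $V[G]$ exactly as it would over a model of $\CH$; in particular $V[G][H]\models\mfc=\aleph_2$ with Cicho\'n values ${\rm add}(\Null)=\mfb=\aleph_1$ and ${\rm non}(\Me)=\aleph_2$. Via the theorems of Bartoszy\'nski and Miller this gives $\mfb(\leq^*)=\mfb(\in^*)=\aleph_1$ while $\mfb(\neq^*)=\aleph_2=\mfc$. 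I would also note $2^{\aleph_0}=\aleph_2$ and $2^{\mfc}=\aleph_4$, so that by Corollary \ref{lemmamfd} every $\mfd(R_{\{\emptyset\}})$ already lies in $\{\aleph_3,\aleph_4\}$.

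For item (2) I claim both equalities are immediate instances of Lemma \ref{productford}. The poset $\mathbb B_{\omega_2}$ lies in $V$ and is ccc (its measure is untouched by the countably closed first step), and it forces $2^{\aleph_0}=\aleph_2$ together with $\mfb(\leq^*)=\aleph_1$ and $\mfb(\in^*)=\aleph_1$. Thus Lemma \ref{productford}, applied once with $R=\leq^*$ and once with $R=\in^*$, yields $\mfd(\leq^*_{\{\emptyset\}})=\mfd(\in^*_{\{\emptyset\}})=\aleph_4$ directly.

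For item (1) the relation $\neq^*$ has $\mfb(\neq^*)=\aleph_2\neq\aleph_1$, so Lemma \ref{productford} does \emph{not} apply and I would argue by hand. The lower bound $\mfd(\neq^*_{\{\emptyset\}})\geq\aleph_3$ is immediate from Corollary \ref{lemmamfd} since $\mfc=\aleph_2$. For the upper bound the key step is to produce an eventually $\neq^*$-dominating sequence of length $\mfc=\aleph_2$: enumerating Baire space as $\{y_\gamma\;|\;\gamma<\aleph_2\}$ and using that every family of size $<\mfb(\neq^*)=\aleph_2$ is $\neq^*$-bounded, at stage $\alpha$ I pick $x_\alpha$ eventually different from $\{y_\gamma\;|\;\gamma<\alpha\}$; then for each fixed $y_\gamma$ the tail $\{x_\alpha\;|\;\alpha>\gamma\}$ consists of functions eventually different from $y_\gamma$, which is exactly the eventually $\neq^*$-dominating property. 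Lemma \ref{eventualsequences}(1) then gives $\mfd(\neq^*_{\{\emptyset\}})\leq\mfd^{\mfc}_{\aleph_2}=\mfd^{\aleph_2}_{\aleph_2}=\mfd_{\aleph_2}$, the last equality holding because the total and mod-$<\!\aleph_2$ versions of $\mfd^{\aleph_2}_{\aleph_2}$ agree (Proposition 12 of \cite{Br19}, squeezing the mod-bounded version between them).

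It remains to see $\mfd_{\aleph_2}=\aleph_3$ in $V[G][H]$, which I expect to be the one genuinely technical point. By Lemma \ref{cohensubsets} this holds in $V[G]$, and I would show it is preserved by the ccc forcing $\mathbb B_{\omega_2}$: any $f:\aleph_2\to\aleph_2$ in $V[G][H]$ has, by the countable chain condition, each value $f(\alpha)$ confined to countably many possibilities, whence $f$ is pointwise dominated by a $V[G]$-function; consequently a $\mfd_{\aleph_2}$-witnessing family of size $\aleph_3$ from $V[G]$ stays dominating, giving $\mfd_{\aleph_2}\leq\aleph_3$, while $\mfd_{\aleph_2}\geq\aleph_2^+=\aleph_3$ holds in $\ZFC$. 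Combining with the previous paragraph yields $\mfd(\neq^*_{\{\emptyset\}})=\aleph_3$. The main obstacles are therefore the bookkeeping that $\mathbb B_{\omega_2}$ forces the stated $\mfb(R)$ values over the intermediate model and the ccc-preservation of $\mfd_{\aleph_2}$; the actual separation of the three cardinals is then forced automatically by the single distinction $\mfb(\neq^*)=\aleph_2$ versus $\mfb(\leq^*)=\mfb(\in^*)=\aleph_1$.
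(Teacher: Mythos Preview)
Your proposal is correct and follows essentially the same approach as the paper: both arguments use Corollary~\ref{lemmamfd} to restrict the values to $\{\aleph_3,\aleph_4\}$, invoke Lemma~\ref{productford} for $\mfd(\leq^*_{\{\emptyset\}})$ (and $\mfd(\in^*_{\{\emptyset\}})$), and use an eventually $\neq^*$-dominating sequence of length $\omega_2$ together with Lemma~\ref{eventualsequences}(1) for $\mfd(\neq^*_{\{\emptyset\}})$. You spell out two points the paper leaves implicit---the explicit construction of the eventually $\neq^*$-dominating sequence from $\mfb(\neq^*)=\aleph_2$, and the ccc preservation of $\mfd_{\aleph_2}=\aleph_3$ from $V[G]$ to $V[G][H]$---but the skeleton is the same.
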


\begin{proof}
As in the proof of Lemma \ref{productford}, note that the cardinal arithmetic in this model is $2^{\aleph_0} = \aleph_2$ and $2^{\aleph_1} = 2^{\aleph_2} = 2^{\aleph_3} = \aleph_4$ and by Lemma \ref{cohensubsets}, $\mfd_{\aleph_1} = \aleph_4$ and $\mfd_{\aleph_2} = \aleph_3$. It follows that every $\mfd(R_{\{\emptyset\}})$ is either $\aleph_3$ or $\aleph_4$ in light of Corollary \ref{lemmamfd}. From now on work in $V[G]$.

Let us begin by showing that $\mfd(\neq^*_{\{\emptyset\}}) = \aleph_3$. Indeed, it suffices to see that $\mfd(\neq^*_{\{\emptyset\}}) \leq \aleph_3$ since it the lower bound is for free in light of Corollary \ref{lemmamfd}. However, $\mfd(\neq^*_{\{\emptyset\}}) \leq \aleph_3$ actually follows from the first part of Lemma \ref{eventualsequences} given the fact that there is an eventually $\neq^*$-dominating sequence of length $\omega_2$ in the random model.

%Enumerate the dominating family $\omega_2^{\omega_2} \cap V$ as $\{h_\alpha \; | \; \alpha < \aleph_3\}$. Since $\omega_2$ is regular we can in fact assume that the family is not just $\leq^*$ dominating but everywhere dominating i.e. for every $g:\omega_2 \to \omega_2$ there is a $\gamma < \omega_3$ so that for all $\alpha < \omega_2$ $g(\alpha) \leq h_\gamma(\alpha)$. Now let $\{r_\alpha \; | \; \alpha < \omega_2\}$ enumerate the eventually different reals added by the random forcing and let (in $V[G]$) $\baire$ be enumerated by $\{x_\alpha \; | \; \alpha < \omega_2\}$. For each $\xi < \omega_3$ let $f_\xi:\baire \to \baire$ be defined by $f_\xi(x_\alpha) = r_{h(\alpha)}$. We claim that $\{f_\xi \; | \; \xi < \omega_3\}$ is a $\neq^*_{\{\emptyset\}}$-dominating family. Indeed, fix $g:\baire \to \baire$. For each $\alpha < \omega_2$ let $\gamma_\alpha$ be sufficiently large that for all $\gamma \geq \gamma_\alpha$ we have that $g(x_\alpha) \neq^* r_\gamma$. Note that such a $\gamma_\alpha$ exists by the ccc of random forcing. Now pick an $\xi < \aleph_3$ so that $h_\xi$ everywhere dominates the function $\alpha \mapsto \gamma_\alpha$.  It follows that for all $\alpha < \omega_2$ we have that $h(x_\alpha) \neq^*f_\xi(x_\alpha)$ as needed.

Now we turn to the proof that $\mfd(\leq^*_{\{\emptyset\}}) = \aleph_4$. Since $\mfd(\leq^*_{\{\emptyset\}}) \leq \mfd(\in^*_{\{\emptyset\}}) \leq \aleph_4$ this will complete the proof of the theorem. This however follows from the fact that $\mfb = \aleph_1$ in the random model alongside an application of Lemma \ref{productford}. Alternatively, we can simply use the fact that in the random model there is a scale of length $\aleph_1$ and apply Lemma \ref{scale}.
\end{proof}

Moving on to the next model we have the following.
\begin{lemma}
Assume $\GCH$. Let $G \subseteq add(\omega_1, \omega_4)$ be generic over $V$. Let $H$ be generic for $\mathbb D_{\omega_2}$,  the finite support iteration of length $\omega_2$ of Hechler forcing, over $V[G]$. Then in $V[G][H]$ we have:
\begin{enumerate}
\item
$\mfd(\neq^*_{\{\emptyset\}}) = \mfd(\leq^*_{\{\emptyset\}}) = \aleph_3$.
\item
 $\mfd(\in^*_{\{\emptyset\}}) = \aleph_4$.
\end{enumerate}
\label{hechlerdcardinal}
\end{lemma}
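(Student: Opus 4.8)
The plan is to follow the template of Lemma~\ref{randomdcardinal}, replacing the measure algebra $\mathbb B_{\omega_2}$ by the Hechler iteration $\mathbb D_{\omega_2}$ and keeping careful track of how the Cicho\'n diagram of the inner Hechler model differs from that of the random model. First I would record the cardinal arithmetic of $V[G][H]$ exactly as in Lemma~\ref{productford}: since $add(\omega_1,\omega_4)$ is countably closed (hence adds no reals) and $\mathbb D_{\omega_2}$ is ccc, one gets $2^{\aleph_0}=\aleph_2$ and $2^{\aleph_1}=2^{\aleph_2}=2^{\aleph_3}=\aleph_4$, and by Lemma~\ref{cohensubsets} (together with the observation that a ccc forcing of size $\aleph_2$ leaves $\mfd_{\aleph_1}$ and $\mfd_{\aleph_2}$ unchanged, because each new function $\omega_i\to\omega_i$ is captured by a ground-model slalom and hence dominated everywhere by a ground-model function) one has $\mfd_{\aleph_1}=\aleph_4$ and $\mfd_{\aleph_2}=\aleph_3$. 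By Corollary~\ref{lemmamfd} every $\mfd(R_{\{\emptyset\}})$ exceeds $2^{\aleph_0}=\aleph_2$ and is at most $2^{\mfc}=\aleph_4$, so each of the three cardinals is either $\aleph_3$ or $\aleph_4$; the whole point of the lemma is to decide which. The relevant values for the inner Hechler model over $V[G]$ (which satisfies $\CH$) are $\mfb(\leq^*)=\mfb=\mfd=\aleph_2$, $\mfb(\neq^*)={\rm non}(\Me)=\aleph_2$, and $\mfb(\in^*)={\rm add}(\Null)={\rm cov}(\Null)=\aleph_1$, the last because Hechler forcing is $\sigma$-centered and so adds no random reals.

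For the two cardinals that should equal $\aleph_3$ I would use a single sequence doing double duty. Enumerate the Hechler reals as $\{d_\alpha \mid \alpha<\omega_2\}$. Each $d_\alpha$ dominates every real in the stage-$\alpha$ extension, and by the standard density argument it is moreover eventually different from every such real: given a Hechler condition and a ground-model $f$, one may always extend the stem past any point with a value that is both above the side condition and distinct from $f$ there. Hence $\{d_\alpha \mid \alpha<\omega_2\}$ is simultaneously an eventually $\leq^*$-dominating and an eventually $\neq^*$-dominating sequence of length $\omega_2$, since any real appears by some stage $\alpha_0$ and is thereafter dominated by, and eventually different from, every $d_\alpha$ with $\alpha\ge\alpha_0$. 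Lemma~\ref{eventualsequences}(1) then gives $\mfd(\leq^*_{\{\emptyset\}}),\mfd(\neq^*_{\{\emptyset\}})\le \mfd^\mfc_{\aleph_2}=\mfd_{\aleph_2}=\aleph_3$, and Corollary~\ref{lemmamfd} supplies the reverse inequality, so both equal $\aleph_3$. (For $\leq^*$ one could instead invoke Lemma~\ref{scale}, as $\mfb=\mfd=\aleph_2$ yields a scale.)

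For $\mfd(\in^*_{\{\emptyset\}})=\aleph_4$ I would apply Lemma~\ref{productford} verbatim with $R=\in^*$ and $\mathbb Q=\mathbb D_{\omega_2}$. The hypotheses hold: because $G$ adds no reals, the finite support Hechler iteration computed in $V$ and in $V[G]$ is literally the same poset (its conditions and order are definable from the shared reals and the ordinal $\omega_2$), so $\mathbb D_{\omega_2}\in V$ and it is ccc in $V[G]$; and over $V[G]$ it forces $2^{\aleph_0}=\aleph_2$ together with $\mfb(\in^*)={\rm add}(\Null)=\aleph_1$. Lemma~\ref{productford} then yields $\mfd(\in^*_{\{\emptyset\}})=\aleph_4$ directly. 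This is precisely where the Hechler model diverges from the random model of Lemma~\ref{randomdcardinal}: there $\mfb(\leq^*)=\mfb=\aleph_1$ made Lemma~\ref{productford} apply to $\leq^*$ as well, whereas here $\mfb=\aleph_2$ pushes $\mfd(\leq^*_{\{\emptyset\}})$ down to $\aleph_3$ and only $\in^*$ retains the small bounding number $\aleph_1$.

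The main obstacle is not a single hard estimate but the bookkeeping of the two auxiliary facts feeding the two halves of the argument: that the Hechler generics are eventually different from all earlier reals (a routine genericity computation, but the crux of the $\neq^*$ bound) and that ${\rm add}(\Null)=\aleph_1$ persists in the inner Hechler model over $V[G]$ despite $\mfb=\mfd=\aleph_2$ (this is exactly what makes $\in^*$ behave differently from $\leq^*$ and $\neq^*$). I would also take care to confirm the absoluteness point that $\mathbb D_{\omega_2}$ is genuinely a ground-model poset, so that Lemma~\ref{productford} applies as stated.
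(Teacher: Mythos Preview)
Your proposal is correct and follows essentially the same approach as the paper: identical cardinal arithmetic setup, Lemma~\ref{scale}/Lemma~\ref{eventualsequences} with the Hechler scale for the $\aleph_3$ values, and Lemma~\ref{productford} with $\mfb(\in^*)=\aleph_1$ for $\mfd(\in^*_{\{\emptyset\}})=\aleph_4$. The only difference is that the paper handles $\mfd(\neq^*_{\{\emptyset\}})$ more cheaply: once $\mfd(\leq^*_{\{\emptyset\}})=\aleph_3$ is established, the $\ZFC$ inequality $\mfd(\neq^*_{\{\emptyset\}})\leq\mfd(\leq^*_{\{\emptyset\}})$ from Figure~\ref{figurenull} immediately gives $\mfd(\neq^*_{\{\emptyset\}})=\aleph_3$, so your separate verification that the Hechler generics form an eventually $\neq^*$-dominating sequence, while correct, is unnecessary.
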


\begin{proof}
The proof of this theorem is very similar to that of Lemma \ref{randomdcardinal}. As in that proof, in $V[G]$ we have that $2^{\aleph_0} = \aleph_2$ and $2^{\aleph_1} = 2^{\aleph_2} = 2^{\aleph_3} = \aleph_4$ and by Lemma \ref{cohensubsets} $\mfd_{\aleph_1} = \aleph_4$ and $\mfd_{\aleph_2} = \aleph_3$. The Hechler reals added form a scale of size $\aleph_2$ (and there is no such scale of size $\aleph_1$). Again by Lemma \ref{scale} it follows that $\mfd(\leq^*_{\{\emptyset\}}) = \aleph_3$, which finishes the first item.

For the second item we simply apply Lemma \ref{productford} alongside the fact that $\mfb(\in^*)$ is $\aleph_1$ in the Hechler model.
\end{proof}

Now we consider the model where all three cardinals are different.
\begin{lemma}
Assume $\GCH$. Let $G \subseteq add (\omega_2, \omega_5)$ be generic over $V$. Let $H \subseteq add(\omega_1, \omega_6)$ be generic over $V[G]$. In $V[G][H]$ let $\mathbb P$ be the forcing consisting of an $\omega_2$-length iteration of Hechler forcing followed by adding $\omega_3$ many random reals, $\mathbb D_{\omega_2} * \dot{\mathbb B}_{\omega_3}$ and let $K \subseteq \mathbb P$ be generic over $V[G][H]$. In $V[G][H][K]$ we have:
\begin{enumerate}
\item
$\mfd(\in^*_{\{\emptyset\}}) = \aleph_6$
\item
$\mfd(\leq^*_{\{\emptyset\}}) = \aleph_5$
\item
$\mfd(\neq^*_{\{\emptyset\}}) = \aleph_4$
\end{enumerate}
\label{mixdcardinal}
\end{lemma}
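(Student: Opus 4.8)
The plan is to evaluate, in the final model $W:=V[G][H][K]$, the invariants on which Sections 5 and 6 make the three dominating numbers depend, and then read each value off; the three targets $\aleph_4<\aleph_5<\aleph_6$ will correspond to $\mfd(R_{\{\emptyset\}})=\mfd^{\mfc}_{\mfb(R)}$ for $\mfb(\neq^*)=\mfc$, $\mfb(\leq^*)=\mfb$, and $\mfb(\in^*)=\aleph_1$. First I would record the arithmetic. Both $add(\omega_2,\omega_5)$ and $add(\omega_1,\omega_6)$ are countably closed, so they add no reals and $V[G][H]\models\CH$; by Lemma \ref{cohensubsets} and its evident $\aleph_2$-analogue, $V[G][H]$ satisfies $\mfd_{\aleph_1}=\aleph_6$, $\mfd_{\aleph_2}=\aleph_5$ and $\mfd_{\aleph_3}=\aleph_4$, while $2^{\aleph_3}=\aleph_5$ already in $V[G]$. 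Since $add(\omega_1,\omega_6)$ is $\aleph_2$-c.c.\ and $\mathbb D_{\omega_2}*\dot{\mathbb B}_{\omega_3}$ is ccc, every new function $\omega_3\to\aleph_i$ ($i\le 3$) is everywhere dominated by a ground-model one, so these three generalized dominating numbers are preserved into $W$. In $W$ one has $\mfc=\aleph_3$; the Hechler reals form a scale and random forcing, being $\baire$-bounding, preserves it, so $\mfb=\mfd=\aleph_2$; the $\omega_3$ random reals force ${\rm cov}(\Null)=\mfc=\aleph_3$, whence $\mfb(\neq^*)={\rm non}(\Me)\ge{\rm cov}(\Null)=\aleph_3$; and since neither $\sigma$-centered Hechler forcing nor random forcing adds a slalom capturing all ground-model reals, $\mfb(\in^*)={\rm add}(\Null)=\aleph_1$. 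Summarizing: $\mfb(\in^*)=\aleph_1$, $\mfb(\leq^*)=\aleph_2$, $\mfb(\neq^*)=\aleph_3=\mfc$ and $2^{\mfc}=\aleph_6$.

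For $\mfd(\neq^*_{\{\emptyset\}})$ the lower bound $\ge\aleph_4$ is immediate from Corollary \ref{lemmamfd}, as $\mfd(\neq^*_{\{\emptyset\}})>\mfc=\aleph_3$. Because $\mfb(\neq^*)=\mfc$, a recursion along an enumeration $\{y_\gamma:\gamma<\omega_3\}$ of $\baire$ produces an eventually $\neq^*$-dominating sequence of length $\omega_3$ (at stage $\alpha$ the set $\{y_\gamma:\gamma<\alpha\}$ has size ${<}\mfc=\mfb(\neq^*)$, hence is $\neq^*$-bounded), and Lemma \ref{eventualsequences}(1) yields $\mfd(\neq^*_{\{\emptyset\}})\le\mfd^{\mfc}_{\omega_3}=\mfd_{\aleph_3}=\aleph_4$. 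For $\mfd(\leq^*_{\{\emptyset\}})$, Lemma \ref{scale} applies (as $\mfb=\mfd=\aleph_2$) and gives $\mfd(\leq^*_{\{\emptyset\}})=\mfd^{\mfc}_{\aleph_2}=\mfd^{\aleph_3}_{\aleph_2}$, which I would compute to be $\aleph_5$: the bound $\mfd^{\aleph_3}_{\aleph_2}\ge\mfd_{\aleph_2}=\aleph_5$ follows by restricting dominating functions $\omega_3\to\omega_2$ to their first $\omega_2$ coordinates, and $\mfd^{\aleph_3}_{\aleph_2}\le\aleph_5$ holds because in $V[G]$ the entire space $\aleph_2^{\aleph_3}$ already has size $2^{\aleph_3}=\aleph_5$, a dominating family that survives the later $\aleph_2$-c.c.\ and ccc iterands.

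The value $\mfd(\in^*_{\{\emptyset\}})=\aleph_6$ is where the forcing must be exploited directly. The upper bound is the trivial $\mfd(\in^*_{\{\emptyset\}})\le|\mathcal S^{\baire}|=\mfc^{\mfc}=2^{\aleph_3}=\aleph_6$. For $\mfd(\in^*_{\{\emptyset\}})>\aleph_5$ I would run the argument of Lemma \ref{productford}, using the room left by $add(\omega_1,\omega_6)$. As the closed forcing adds no reals, $\mathbb P=\mathbb D_{\omega_2}*\dot{\mathbb B}_{\omega_3}$ is (equivalent to) a forcing lying in $V$; since it is ccc and $\mathbb A:=add(\omega_2,\omega_5)*add(\omega_1,\omega_6)$ is countably closed, their generics commute, so $W=V[K'][\mathbb A]$ with $K'$ being $\mathbb P$-generic over $V$ and $\mathbb A$ being $\omega$-distributive over $V[K']$. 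Given names for a purported dominating family $\{f_\alpha:\alpha<\omega_5\}\subseteq\mathcal S^{\baire}$, the $\aleph_3$-c.c.\ of $\mathbb A$ lets me capture all of them in an intermediate model $W_0=V[K'][\mathbb A_0]$ using fewer than $\omega_6$ of the $add(\omega_1,\omega_6)$-coordinates, leaving a remainder $\mathbb A_1\cong add(\omega_1,\omega_6)$ that is $\omega$-distributive over $W_0$. In $W_0$ fix an $\in^*$-unbounded family $\{z_\xi:\xi<\omega_1\}$ (available since $\mfb(\in^*)=\aleph_1$ and unboundedness is absolute between models with the same reals) and an enumeration $\{y_\gamma:\gamma<\omega_3\}$ of $\baire$; read off an $\mathbb A_1$-generic $\dot g:\omega_3\to\omega_1$ and put $g'(y_\gamma)=z_{\dot g(\gamma)}$. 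Exactly as in Lemma \ref{productford}, for each fixed $\alpha$ and each condition $p$ some $\gamma$ is undecided, and $p$ extends to force $\dot g(\gamma)=\beta$ with $z_\beta\notin^*f_\alpha(y_\gamma)$ (the single slalom $f_\alpha(y_\gamma)$ cannot $\in^*$-capture the whole unbounded family); hence no $f_\alpha$ is an $\in^*_{\{\emptyset\}}$-bound for $g'$, the family is not dominating, and $\mfd(\in^*_{\{\emptyset\}})\ge\aleph_6$.

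The main obstacle is the bookkeeping of the third paragraph: justifying that $\mathbb A$ commutes past $\mathbb P$, which rests on $\mathbb P\in V$, i.e.\ on the two closed forcings adding no reals so that the Hechler iteration and the measure algebra are computed identically in $V$ and $V[G][H]$ (exactly the ``$\mathbb Q\in V$'' hypothesis of Lemma \ref{productford}), and that after absorbing the $\aleph_5$-sized family one genuinely retains a full $add(\omega_1,\omega_6)$-generic to diagonalize against. Establishing $\mfb(\neq^*)=\mfc$ and the preservation of $\mfd_{\aleph_1},\mfd_{\aleph_2},\mfd_{\aleph_3}$ and of $2^{\aleph_3}=\aleph_5$ through the four-step iteration is routine but must be checked with care, since the whole separation $\aleph_4<\aleph_5<\aleph_6$ hinges on these equalities.
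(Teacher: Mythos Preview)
Your proposal is correct and follows essentially the same route as the paper: compute the cardinal arithmetic and the values $\mfb(\in^*)=\aleph_1$, $\mfb=\mfd=\aleph_2$, $\mfb(\neq^*)=\aleph_3$, $\mfd_{\aleph_i}$ for $i=1,2,3$; then handle $\mfd(\in^*_{\{\emptyset\}})$ via the Lemma~\ref{productford} argument, $\mfd(\neq^*_{\{\emptyset\}})$ via an eventually $\neq^*$-dominating sequence and Lemma~\ref{eventualsequences}, and $\mfd(\leq^*_{\{\emptyset\}})$ by reducing to $\mfd^{\aleph_3}_{\aleph_2}=\aleph_5$. The only difference is that for $\mfd(\leq^*_{\{\emptyset\}})$ the paper invokes Theorem~\ref{mfc=mfb+n} (using $\mfc=\mfb^{+1}$) whereas you invoke Lemma~\ref{scale} (using $\mfb=\mfd$); both hypotheses hold in $W$ and both yield $\mfd(\leq^*_{\{\emptyset\}})=\mfd^{\mfc}_{\mfb}$, so this is a cosmetic variation.
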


\begin{proof}
Modifying the arguments from Lemmas \ref{cohensubsets} and \ref{productford} alongside well known facts about these forcing notions we have that in $V[G][H][K]$ the following all hold.

\begin{enumerate}
\item
$2^{\aleph_0} = \aleph_3$ and for all $\kappa \in [\aleph_1, \aleph_5]$, $2^{\kappa} = \aleph_6$,
\item
$\mfb(\in^*) =\aleph_1 < \mfb = \mfd = \aleph_2 < \mfb(\neq^*) = \aleph_3$,
\item
$\mfd_{\aleph_3} = \aleph_4$, $\mfd_{\aleph_2} = \aleph_5$ and $\mfd_{\aleph_1} = \aleph_6$.
\end{enumerate}

Now, by essentially the same argument as in Lemma \ref{productford} we get that $\mfd (\in^*_{\{\emptyset\}}) = \aleph_6$. To show that $\mfd(\leq^*_{\{\emptyset\}}) = \aleph_5$ we will apply Theorem \ref{mfc=mfb+n}. We need to see that $\mfd^{\aleph_3}_{\aleph_2}$ is $\aleph_5$. To see why this is true, observe that a simply density argument forces that $\mfd^{\aleph_3}_{\aleph_2} = \aleph_5$ in $V[G]$ and since the remainder forcing is $\aleph_2$-c.c., the set $\omega_2^{\omega_3} \cap V[G]$ forms a dominating family in $\omega_2^{\omega_3}$ while no family of smaller cardinality does.

Finally to show that $\mfd(\neq^*_{\{\emptyset\}}) = \aleph_4$ we can use the same  argument as in Lemma \ref{randomdcardinal} (but upping each cardinal by one).
\end{proof}

The final two models for proving Theorem \ref{dcardinals} follow (under $\CH$) from Theorem \ref{CHtheorem2} alongside well known facts about $\mfd_{\aleph_1}$, see for example \cite{CS95}. However, they can also be proved in the $\neg \CH$ context. In the case that all $\mfd(R_{\{\emptyset\}})$ cardinals are greater than $\mfc^+$, we can add $\omega_4$ many Cohen subsets to $\omega_1$ and then $\aleph_2$ many Cohen reals and apply Lemma \ref{productford}, noting that in the Cohen model, $\mfb(\in^*) = \mfb = \mfb(\neq^*) = \aleph_1$.  In the case where all $\mfd(R_{\{\emptyset\}})$ cardinals are less than $2^\mfc$, first force to add $\omega_4$ many Cohen subsets to $\omega_1$ and then force with a finite support product of localization forcing $\mathbb{LOC}$ of length $\omega_2$. The resulting generic for the localization iteration will be a sequence of $\omega_2$-many slaloms $\{s_\alpha \; | \; \alpha < \omega_2\}$ which is eventually $\in^*$-dominating. It follows from the first item in Lemma \ref{eventualsequences} that $\mfd(R_{\{\emptyset\}}) \leq \mfd^\mfc_{\aleph_2} = \aleph_3$ and hence all $\mfd(R_{\{\emptyset\}})$ cardinals are $\aleph_3$. See \cite[Section 3.8]{Switz20} for more on properties of the localization poset.

\section{Conclusion and Open Questions}

The results of the previous section begin to give a more complete picture of the relations $R_\mathcal I$ and their cardinal characteristics. Nevertheless several open questions remain. We conclude by discussing them. The first question concerns the final inequality between the bounding numbers whose negation does not follow from Figures 1, 2 and yet does not hold in any of the models in Section 4. 

\begin{question}
Is $\mfb(\neq^*_\Null) < \mfb(\in^*_\Kb)$ consistent?
\label{conineq}
\end{question}

The issue in providing a model where $\mfb(\neq^*_\Null) < \mfb(\in^*_\Kb)$ is consistent is that for most of our consistent inequalities, the smaller one is shown to be $\aleph_1$ using Lemma \ref{lemmab<cov}. However, by Proposition \ref{neqN} this is not possible in this case. Indeed, if ${\rm add}(\Null) < \mfd$ then by Lemma \ref{lemmab<cov} we have that $\mfb(\in^*_\Kb) = {\rm add}(\Null)$ so in such a model $\mfb(\in^*_\Kb) \leq \mfb(\neq^*_\Null)$. It follows that if  $\mfb(\neq^*_\Null) < \mfb(\in^*_\Kb)$ is consistent we must have that ${\rm add}(\Null) = \mfd < \mfc$, however in all the standard models of this inequality we have computed, namely the random and Sacks models, the inequality $\mfb(\neq^*_\Null) < \mfb(\in^*_\Kb)$ does not hold.

The next question we have also concerns the bounding numbers. Recall that the final $\mfb(R_\mathcal I)$ number not computed is $\mfb(\in^*_\Me)$ in the Laver model.

\begin{question}
What is the value of $\mfb(\in^*_\Me)$ in the Laver model?
\end{question}

As noted in the section on the Laver model, this cardinal must be either $\aleph_2$ or $\aleph_3$. A related question is the following.
\begin{question}
Is there an $(\aleph_2, \aleph_1)$-Rothberger family for $\Me$ in the Laver model?
\end{question}
Note that a positive answer to this question implies that $\mfb(\in^*_\Me) = \aleph_3$ in the Laver model by Lemma \ref{coveringfamilylemma}.

We can also ask about Rothberger families more generally. As noted after the proof of Lemma \ref{coveringfamilylemma}, every computation of the form $\mfb(R_\mathcal I) > \mfc$ can be thought of as factoring through this lemma. It is therefore worth asking if this is necessary.

\begin{question}
Given $R$ and $\mathcal I$, assume $\mfb(R_\mathcal I) > \mfc$. Does it follow that there exists a $(\mfc, \mfb(R))$- Rothberger family for $\mathcal I$ as in Lemma \ref{coveringfamilylemma}?
\end{question}

Moving on the dominating numbers, the main open question, as discussed in Section 5 is the relation between the cardinals of the form $\mfd(R_{\{\emptyset\}})$ and the numbers $\mfd^\lambda_\kappa$.

\begin{question}
Is every $\mfd(R_{\{\emptyset\}})$ cardinal equal to one of the form $\mfd_\kappa^\mfc$? In particular, is $\mfd(R_{\{\emptyset\}})$ $\ZFC$-provably equal to $\mfd^\mfc_{\mfb(R)}$?
\label{dkappalambdaq}
\end{question}

One can ask a more general question along these lines. Namely if the cardinal characteristics on spaces of the form $\kappa^\lambda$ with $\kappa \leq \lambda \leq \mfc$ determine any of the bounding or dominating numbers for the $R_\mathcal I$ relations.

\begin{question}
Are any of the cardinal characteristics discussed determined by the values of the cardinals in the Cicho\'{n} diagram and its higher analogues on $\kappa^\lambda$ for $\kappa \leq \lambda \leq \mfc$?
\end{question}

Put another way, is there either a $\mfb(R_\mathcal I)$ or a $\mfd(R_{\{\emptyset\}})$ cardinal whose value is completely determined by the values in the Cicho\'{n} diagrams on $\kappa^\kappa$ for $\kappa \in [\omega, \mfc]$ and the numbers $\mfb^\lambda_\kappa$ and $\mfd^\lambda_\kappa$ for $\kappa \leq \lambda \leq \mfc$? We know by the computation of $\mfb(\in^*_\Me)$ in the $\lambda$-dual random model that this is not true in general, but could it be true for some of the cardinals? Note that a positive answer to Question \ref{dkappalambdaq} implies a positive answer to this question in the case of the dominating numbers.

Finally we ask about the relationship between the $\mfb(R_\mathcal I)$ and the $\mfd(R_\mathcal I)$ cardinals. These relations were not considered in this paper at all. 

\begin{question}
Can some $\mfb (R_\mathcal I)$ be consistently strictly larger than some $\mfd (R_\mathcal J)$?
\end{question}
The only relation $R$ for which is is not explicitly ruled out by Figures 1 and 2 is the relation $\neq^*$, so really the question is whether there is an ideal $\mathcal I \in \{\Null, \Me, \Kb\}$ so that $\mfb(\neq^*_\mathcal I) > \mfd(\neq^*_{\{\emptyset\}})$ is consistent.

\end{document}